\documentclass[a4paper,12pt,reqno]{amsart}

\usepackage[margin=2.5cm]{geometry}
\usepackage{amsthm,amsmath,amssymb}
\usepackage{dsfont}
\usepackage[english]{babel}
\usepackage[utf8]{inputenc}
\usepackage[T1]{fontenc}
\usepackage{mathtools}
\usepackage{bm}
\usepackage{stmaryrd}
\usepackage{xcolor} \definecolor{bluegreen2}{RGB}{0, 85, 127}
\usepackage[linktocpage]{hyperref}
\hypersetup{colorlinks=true, linkcolor=bluegreen2, citecolor=bluegreen2, filecolor=magenta, urlcolor=bluegreen2}
\usepackage{tikz, tikz-cd, pgfplots} \usetikzlibrary{spy,arrows,trees,shapes,calc,decorations.markings,decorations.pathmorphing,patterns}
\usepackage{enumitem} \setlist{itemsep=1.5mm}
\usepackage{newunicodechar}
\usepackage{multicol}
\usepackage{array}
\usepackage{multirow}

\usepackage{comment}

\theoremstyle{plain}
\newtheorem{theorem}{Theorem}[section]
\newtheorem{theoremintro}{Theorem}
\newtheorem{prop}[theorem]{Proposition}
\newtheorem{cor}[theorem]{Corollary}
\newtheorem{lemma}[theorem]{Lemma}

\theoremstyle{definition}
\newtheorem{defi}[theorem]{Definition}

\newtheorem{ex}[theorem]{Example}
\newtheorem{setting}[theorem]{Setting}

\theoremstyle{remark}
\newtheorem{remark}[theorem]{Remark}

\newcommand{\wt}{\widetilde}
\newcommand{\wh}{\widehat}

\newcommand{\ol}{\overline}

\newcommand{\C}{\mathcal{C}}

\newcommand{\K}{\mathcal{K}}
\renewcommand{\AA}{\mathds{A}}

\renewcommand{\P}{\mathcal{P}}
\newcommand{\M}{\mathcal{M}}
\newcommand{\R}{\mathcal{R}}
\newcommand{\T}{\mathcal{T}}

\newcommand{\CC}{\mathds{C}}

\newcommand{\ZZ}{\mathds{Z}}
\newcommand{\QQ}{\mathds{Q}}

\newcommand{\PP}{\mathds{P}}
\newcommand{\LL}{\mathds{L}}
\newcommand{\set}[1]{\left\{ #1 \right\}}

\newcommand{\KVarC}{K_0(\Var_\CC)}

\newcommand{\Zmott}[1]{Z_{\mot,#1}}

\newcommand{\Ztopp}[1]{Z_{\topo,#1}}

\newcommand{\ZHod}{{Z_{\Hodge}}}

\DeclareMathOperator{\Card}{Card}

\DeclareMathOperator{\Sing}{Sing}
\DeclareMathOperator{\Var}{Var}
\DeclareMathOperator{\GL}{GL}
\DeclareMathOperator{\id}{Id}
\DeclareMathOperator{\red}{red}

\DeclareMathOperator{\mot}{mot}

\DeclareMathOperator{\topo}{top}
\DeclareMathOperator{\Hodge}{Hod}

\DeclareMathOperator{\Ram}{Ram}
\DeclareMathOperator{\supp}{supp}
\DeclareMathOperator{\mult}{mult}
\DeclareMathOperator{\Diff}{Diff}

\title[Poles of zeta functions for finite morphisms between normal surfaces]{On the poles of zeta functions for finite morphisms between normal surfaces}

\author[E.~León-Cardenal]{Edwin León-Cardenal}
\address[E.~León-Cardenal]{Department of Mathematics, IUMA\newline\indent
	University of Zaragoza\newline\indent
	Calle Pedro Cerbuna 12\newline\indent
	50019, Zaragoza, Spain}
\urladdr{\url{http://riemann.unizar.es/~eleon}}
\email{\href{mailto:eleon@unizar.es}{eleon@unizar.es}}

\author[J.~Mart\'{\i}n-Morales]{Jorge Mart\'{\i}n-Morales}
\address[J.~Mart\'in-Morales]{Department of Mathematics, IUMA\newline\indent
	University of Zaragoza\newline\indent
	Calle Pedro Cerbuna 12\newline\indent
	50019, Zaragoza, Spain}
\urladdr{\url{https://riemann.unizar.es/~jorge}}
\email{\href{mailto:jorge.martin@unizar.es}{jorge.martin@unizar.es}}

\author[W.~Veys]{Willem Veys}
\address[W.~Veys]{
	University of Leuven (KU Leuven),\newline\indent
	Department of Mathematics,\newline\indent
	Celestijnenlaan 200B, B-3001\newline\indent
	Leuven (Heverlee), Belgium}
\urladdr{\url{https://perswww.kuleuven.be/wim\_veys}}
\email{\href{mailto:wim.veys@kuleuven.be}{wim.veys@kuleuven.be}}

\author[J.~Viu-Sos]{Juan Viu-Sos}
\address[J.~Viu-Sos]{
	Dpto. de Matemáticas e Informática (DMIAICN)\newline\indent
	ETSI Caminos, Canales y Puertos\newline\indent
	Universidad Politécnica de Madrid\newline\indent
	C\textbackslash Prof. Aranguren 3, 28040 Madrid, Spain
}
\urladdr{\url{https://jviusos.github.io/}}
\email{\href{mailto:juan.viu.sos@upm.es}{juan.viu.sos@upm.es}}

\subjclass[2010]{Primary: 14B05; % Singularities
	Secondary: 14E18, % Arcs and Motivic Integration
	14G10, % Zeta Functions
	32S25, % Surface and hypersurface singularities
	32S45} % Modifications; resolution of singularities
\keywords{Motivic zeta function, topological zeta function, finite morphism, $\QQ$-resolution, quotient singularity, log canonical model}
\thanks{
	EL is supported by MICIU/AEI/10.13039/501100011033 grant PID2024-156181NB-C33 and by the European Union NextGenerationEU/PRTR and UNIZAR via María Zambrano's Program and by CONAHCYT project CF-2023-G33.
	JM is supported by the European Union NextGenerationEU/PRTR grant RYC2021-034300-I and by MICIU/AEI/10.13039/501100011033 grants CNS2024-154271 and PID2024-156181NB-C33.
	WV is supported by KU Leuven grant GYN-E4282-C16/23/010.
	JV is supported by MICIU/AEI/10.13039/501100011033 and ERDF/EU grant PID2024-156181NB-C32. %2026 grant
	EL and JM are also partially supported by Departamento de Ciencia, Universidad y Sociedad del Conocimiento del Gobierno de Aragón grant E22 20R Álgebra y Geometría. JM is also supported by Junta de Andalucía grant FQM-333.
	}

\begin{document}
	
\begin{abstract}
For a divisor representing a function and another divisor representing a differential form on a normal surface singularity,
there is a notion of motivic and topological zeta function.
In this paper, given a finite morphism between two normal surfaces,
we prove that the set of poles of the motivic zeta function associated with the target is contained in the one
associated with the source. We illustrate by examples that this inclusion is strict in general, and that on the topological level
there are in general no inclusions between the sets of poles on source and target.
On the other hand, when the morphism is the quotient map induced by an action 
of a finite abelian group on $\CC^2$, and the divisor associated with the differential form on the source is trivial,
we do show  equality between the corresponding sets of poles, both on motivic and topological level.  
In addition, again for the quotient map induced by an action of a finite abelian group on $\CC^2$, but now with a general divisor associated with a differential form,  we  provide a criterion when the topological zeta function on the target is just a multiple of the one on the source.
Finally, we compare  log canonical models on source and target with a view on zeta functions.
\end{abstract}
	
\maketitle
	
%\tableofcontents

	% =========================================================
	% Introduction
	% =========================================================
	\section*{Introduction}
	Motivic and topological zeta functions are important invariants in singularity theory that have been widely investigated.
	Originally defined for functions on smooth varieties,  they now admit a more general definition, including a divisor $D$
	resembling a function and another divisor $W$ representing a differential form, on a possibly singular variety $S$,
	see e.g.~\cite{DL99, Veys99, CNS18}. More recently, we have developed a theory of zeta functions over $\QQ$-Gorenstein
	varieties for $\QQ$-divisors using the so-called $\QQ$-Gorenstein measure~\cite{LCMMVVS:formulas}.

	Topological zeta functions are defined in Section~\ref{SubSec:Ztop};  in particular one observes that they are rational functions over $\QQ$ in a variable $s$, with possible poles in $\QQ$. 
We will denote the topological zeta function associated to $D$ and $W$ on a variety (germ) $(V,o)$ by $\Ztopp{(V,o)} (D,W; s)$.

Assume now that $G$ is a small finite subgroup of $\GL(n,\CC)$,
	and let $\pi: \CC^n \to \CC^n/G$ be the quotient morphism. Consider two normal crossing $\QQ$-divisors $\ol{D}$ and $\ol{W}$ in $\CC^n/G$, and denote by $D$ and $W$ their corresponding pullbacks in $\CC^n$. Then
	\begin{equation}\label{eq:ztop-equality}
	\Ztopp{(\CC^n/G,[0])}(\ol{D},\ol{W}; s) = |G| \cdot \Ztopp{(\CC^n,0)} (D,W; s),
	\end{equation}
	see \cite[Corollary 3.4]{LCMMVVS:formulas}. This equality is a specialization of the motivic relation
	found in~\cite[Theorem 3]{LCMMVVS:formulas}. Motivated by this observation, we study in this work  relations between motivic and topological
	zeta functions under finite morphisms. In general we cannot expect an equality of type \eqref{eq:ztop-equality} to be true.
	For a precise description of the main results we present in this paper, some notation needs to be introduced.
	
	Let $(S,o)$ be a normal algebraic surface germ. Take a nonzero effective $\QQ$-Weil divisor $D$ and a $\QQ$-Weil divisor $W$ on $(S,o)$.
	We say that $W$ \emph{has no logarithmic poles outside} $D$ if any component of $\supp(W)\setminus \supp(D)$ has associated multiplicity different from~$-1$ in $W$, see Setting~\ref{setting1}. %Recall that $\Ztopp{(S,o)}(D, W; s)$ is a rational function on the variable~$s$ \cite{DL92}.
	Let us denote by $\P^{\topo}_{(S,o)}(D,W) \subset \QQ$ the set of poles of $\Ztopp{(S,o)}(D, W; s)$.
	In the motivic case the notion of pole is not straightforward, since the Grothendieck ring is not a domain~\cite{Po02,Bor18}.
	However, there exists well-defined notions of poles and their multiplicities, see Remark~\ref{defpole} for the details.
	Analogously, the set of poles of $\Zmott{(S,o)}(D, W; s)$ is denoted by $\P^{\mot}_{(S,o)}(D,W) \subset \QQ$.
We note that, taking any embedded resolution $\pi$ of $\supp(W)\cup \supp(D)$, each irreducible component of $\pi^{-1}(\supp(W)\cup \supp(D))$ induces a candidate pole of both the associated topological and motivic zeta function, and that each pole is of this form, see Section \ref{Subsec:TopMotZFunctions}.
Also, we have that
	\begin{equation}\label{eq:top-mot}
	\P^{\topo}_{(S,o)}(D,W)
	\subset \P^{\mot}_{(S,o)}(D,W),
	\end{equation}
	since the topological zeta function is a specialization of the motivic one via the Euler characteristic \cite{DL98}.
	The other inclusion is not true in general as Example~\ref{ex:noPoleUpstairs} shows, see also~\cite[Example 2.10]{ACLM05} for a higher dimensional example based on~\cite[Example 4.5]{IshiiKollar03}.

	Before discussing the technical content, we already mention two important techniques that have been employed throughout the paper. First, we amply use $\QQ$-resolutions, which are partial toric  resolutions arising naturally from weighted blow-ups, see Section~\ref{sec:embQres}. Second, the so-called $\alpha$-values appear naturally in residues of candidate poles; the corresponding $\alpha$-relations and $\alpha$-conditions  have been successfully adapted to the setting of $\QQ$-resolutions, see Sections~\ref{subsec:NormalSurf} and \ref{sec:minQresInQuotients}.
	 	
	The main aim of this manuscript is to elucidate the relations between the sets of poles of zeta functions for divisors on normal surfaces, that are related by a finite morphism. First, we can prove a very general inclusion result about poles on the level of motivic zeta functions.
	
	\begin{theoremintro}\label{Thm:Comparison_Zmot}
	Let $\rho\colon (S, o)\to (\ol{S}, \ol{o})$ be a finite 
	morphism between normal surfaces $S$ and $\ol{S}$, with associated ramification divisor $\Ram_\rho$ on $S$. Fix two $\QQ$-Weil divisors $\ol{D}$ and $\ol{W}$ on $(\ol{S},\ol{o})$,  with $\ol{D}$ effective and nonzero, and $\ol{W}$ without logarithmic poles outside $\ol{D}$. Put $D = \rho^* \ol{D} $ and $ W =\rho^* \ol{W} + \Ram_\rho$ on $(S, o)$.
	Then
	$$
	\P^{\mot}_{(\ol{S},\ol{o})}(\ol{D},\ol{W}) \subset \P^{\mot}_{(S, o)}(D,W).
	$$
	Furthermore, $s_0\in\P^{\mot}_{(\ol{S},\ol{o})}(\ol{D},\ol{W})$ is a pole of order two if and only if $s_0\in\P^{\mot}_{(S, o)}(D,W)$ is of order two
	as well.
	\end{theoremintro}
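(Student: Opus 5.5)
Our plan is to compare the two zeta functions through compatible resolutions. We would take an embedded resolution $\bar\pi\colon \bar Y\to \ol{S}$ of $\supp(\ol D)\cup\supp(\ol W)$ and then a resolution $\pi\colon Y\to S$ of the normalization of the fibre product $S\times_{\ol S}\bar Y$, chosen so that $\pi^{-1}(\supp D\cup\supp W)$ is a normal crossing divisor. This gives a commutative square with a generically finite morphism $\tilde\rho\colon Y\to\bar Y$. In practice it is more convenient to stop at the normalization $\tilde S$ of $S\times_{\ol S}\bar Y$. Its singularities over normal crossing points of $\bar Y$ are cyclic quotient singularities, and the preimage is a $\QQ$-normal crossing divisor, so $\tilde S\to S$ is an embedded $\QQ$-resolution. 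We would then use the formula for the motivic zeta function in terms of embedded $\QQ$-resolutions (Section~\ref{sec:embQres}), together with the description of candidate poles as $-\nu_E/N_E$.

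The key local computation is as follows. For an exceptional or strict-transform component $E$ of $\tilde S$ mapping onto $\bar E$ with ramification index $e$, the pair $(N,\nu)$ attached to the divisors $D=\rho^*\ol D$ and $W=\rho^*\ol W+\Ram_\rho$ satisfies $N_E=eN_{\bar E}$. Since $K_{\tilde S}=\tilde\rho^*K_{\bar Y}+\Ram_{\tilde\rho}$, it also satisfies $\nu_E=e\,\nu_{\bar E}$. Hence every component $E$ over $\bar E$ yields the same candidate pole $-\nu_{\bar E}/N_{\bar E}$. The same relation holds for intersection points, so pairs of intersecting components upstairs map to intersecting pairs downstairs. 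This is exactly why $W$ must include $\Ram_\rho$. The hypothesis that $\ol W$ has no logarithmic poles outside $\ol D$ ensures that all relevant $\nu$ are nonzero, so that no spurious cancellation creates or destroys candidates for components with $N=0$.

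Next we would fix $s_0\in\P^{\mot}_{(\ol S,\ol o)}(\ol D,\ol W)$ and compare residues, or more precisely the leading coefficients of $\Zmott{}$ in $(1-\LL^{-\nu-Ns})$ in the sense of Remark~\ref{defpole}. Downstairs, the coefficient of the order-two part is a sum over intersection points $\bar E_i\cap\bar E_j$ with both components giving $s_0$. The order-one part is a sum of classes of the form $[\bar E_i^\circ]$ (suitably twisted by $\mu_{N}$-covers in the motivic setting), plus intersection contributions. Upstairs, each term is the class of a cover $E^\circ\to\bar E^\circ$, or of a finite set of points over each intersection point. For order two we would argue directly: intersection points exist upstairs if and only if they exist downstairs, and these contributions are positive counts of points times a fixed unit. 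So the order-two coefficients are simultaneously nonzero, which gives the ``furthermore'' statement.

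The main obstacle is the order-one case. There the motivic residue can cancel, and we must show that nonvanishing downstairs forces nonvanishing upstairs. Since the Grothendieck ring is not a domain, a naive argument fails. We would first try to apply a ring morphism to a realization in which positivity is available, such as the Hodge–Deligne or Poincaré polynomial specialization of the leading coefficient. One then shows that the residue at $s_0$ is a nontrivial polynomial whose top-degree part, coming from the $\bar E_i$ curves themselves (weight $2\dim$ part, counting components), cannot cancel. The covers upstairs then contribute a positive number of components mapping onto each $\bar E_i$. If the downstairs residue is nonzero only because of lower-order terms, we would use the $\alpha$-relations of Sections~\ref{subsec:NormalSurf} and~\ref{sec:minQresInQuotients} on both sides to reduce to the case where a component with candidate $s_0$ has a nonzero top-weight contribution. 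The compatibility of $\alpha$-values under $\tilde\rho$ (they scale by $e$) lets us transfer this upstairs.
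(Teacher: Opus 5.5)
Your set-up and the order-two part essentially match the paper's argument: normalized fibre product, $(N,\nu)=e\,(\ol N,\ol\nu)$, and double candidate poles lifting to intersecting components with the same ratio, which are automatically double poles (Proposition~\ref{RVmot}(1)). One fix is needed there. Downstairs you must resolve $\supp(\ol D)\cup\supp(\ol W)\cup\supp(B_\rho)$, not just $\supp(\ol D)\cup\supp(\ol W)$. Otherwise the branch locus of $\wt\rho$ need not be normal crossing, and the normalization can have non-cyclic quotient singularities: a double cover branched along $y^3=x^5$ gives $E_8$.

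\textbf{The simple-pole case has a genuine gap.}

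First, your claim that the top-weight part of the Hodge-specialized residue ``cannot cancel'' is false. Each $\alpha_j<0$ contributes $-uv$ at top weight, so the leading coefficient is
\[
\sum_{i\in I_e(s_0)}\frac{1-k_i^-}{N_i}-\sum_{i\in I_s(s_0)}\frac{k_i^-}{N_i}.
\]
Take a rational rupture component with exactly one negative $\alpha$ and no strict transform at $s_0$, which is the typical situation. Then both this coefficient and the $u\to0$ limit vanish. This is exactly the delicate case of Rodrigues--Veys, and your fallback (``reduce to a component with nonzero top-weight contribution'') has no mechanism behind it.

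Second, nonvanishing of a residue cannot be transferred from target to source by scaling.
\begin{enumerate}
\item The $\alpha$-values scale by the local ramification index $n$ of the point in $E\to\ol E$, not by $e$ (Theorem~\ref{thm:MultAlphas_proportional}(2)).
\item Points split under the cover.
\item A point with $\ol\alpha_Q\neq 1$ can have preimages with $\alpha_P=1$, for instance when $\ol\alpha_Q=1/n$.
\end{enumerate}
Example~\ref{ex:noPoleUpstairs} shows the topological residue at $-8/5$ is nonzero downstairs but zero upstairs. Also, the bound $-1\le\alpha<1$ from Section~\ref{sec:minQresInQuotients} holds only for $\CC^2/\mu_d$ with $W=0$; Example~\ref{ex:noPoleDownstairs} has $\alpha=2$.

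\textbf{What the paper does instead.} It avoids comparing residues altogether and works in two steps.

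First it proves a geometric characterization valid for arbitrary $W$ and $\QQ$-resolutions (Theorem~\ref{Thm:RodVeysQres}). A simple pole comes from a strict transform of $D$, a non-rational exceptional curve, or a rational curve with at least three points with $\alpha_P\neq 1$. Your Hodge-polynomial analysis belongs inside the proof of this characterization, and even there the rupture case needs Rodrigues--Veys' finer argument.

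Then each condition is transferred geometrically. Strict transforms lift. Hurwitz gives $g(E)\ge g(\ol E)$. The key covering lemma is Theorem~\ref{thm:AtMostTwoAlphas}: if $\ol E$ is rupture and all $\ol\alpha\neq0$ (guaranteed because $s_0$ is not a double pole), then every rational $E$ over $\ol E$ is rupture. Its proof is a Riemann--Hurwitz count for $\PP^1\to\PP^1$, using $\alpha_P=n\ol\alpha_Q$ and $\sum_P(\alpha_P-1)=-2$:
\begin{enumerate}
\item If $E$ has at most two points with $\alpha\neq 1$, their values satisfy $\alpha'<0<\alpha=-\alpha'$.
\item The point below $P'$ is then totally ramified.
\item A case analysis on whether the point below $P$ is totally ramified shows that at most two points of $\ol E$ have $\ol\alpha\neq 1$.
\end{enumerate}
Nothing in your proposal plays the role of this lemma.
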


	When $W=0$, poles of the motivic zeta function are completely determined in terms of the minimal embedded resolution by strict transforms, cycles of rational exceptional components, and exceptional components which are either non rational or rupture, see~\cite[Theorem 5.5]{RodriguesVeys03}.
	The strategy of the proof of Theorem~\ref{Thm:Comparison_Zmot} begins with the extension of this result for arbitrary $W$,
	see	Theorem~\ref{Thm:RodVeysQres}. In particular, this allows us to generalize in Definition~\ref{def:rupture-divisor} the concept of rupture component for any normal surface and $W \neq 0$. Then we fix an embedded resolution $\ol{\pi} \colon \ol{X}\to (\ol{S},\ol{o})$
	of $\ol{M} = \supp(\ol{D})\cup\supp(\ol{W})\cup\supp(B_\rho)$, where $B_\rho$ is the branch divisor on $\ol{S}$ of $\rho$,
	and we construct an embedded $\QQ$-resolution of $\rho^{-1}(\ol{M})$ making the following diagram commute.
	\begin{equation*}
		\begin{tikzcd}
			S \arrow[d, twoheadrightarrow, "\rho"]
			& \arrow[l, dashed, "\pi" above] Z \arrow[d, dashed, twoheadrightarrow, "\wt{\rho}"]\\
			\overline{S} & \arrow[l, "\ol{\pi}" above] \ol{X}
		\end{tikzcd}
	\end{equation*}
	The way one obtains $Z$ is inspired by Jung's construction, the details are given in Section~\ref{subsec:Zmot_inducedRes}.
	To finish the proof of Theorem~\ref{Thm:Comparison_Zmot}, we use Theorem~\ref{thm:MultAlphas_proportional} relating the
	numerical data of $\pi$ and $\ol{\pi}$, see Section~\ref{sec:proof-main-thm1}.
	
	The reverse inclusion $\P^{\mot}_{(\ol{S},\ol{o})}(\ol{D},\ol{W}) \supset \P^{\mot}_{(S, o)}(D,W)$ does not hold in general,
	as Example~\ref{ex:noPoleDownstairs} shows. One of the main reasons is that the condition for an exceptional component
	to be a rupture component is not preserved when pushing forward through finite morphisms.
	
	It is worth noticing that Theorem~\ref{Thm:Comparison_Zmot} does not provide an automatic similar relation for the topological
	zeta function, since in general the inclusion~\eqref{eq:top-mot} is strict.
	In fact, on the level of the topological zeta function, neither the relation $\P^{\topo}_{(\ol{S},\ol{o})}(\ol{D},\ol{W}) \subset \P^{\topo}_{(S, o)}(D,W)$ nor  $\P^{\topo}_{(\ol{S},\ol{o})}(\ol{D},\ol{W}) \supset \P^{\topo}_{(S, o)}(D,W)$ holds,
	as evidenced by Example~\ref{ex:noPoleUpstairs} and~\eqref{ex:ztop-C2-C2G} in Example~\ref{ex:noPoleDownstairs}, respectively.
	
	In the second part of the paper, we investigate a more concrete situation.
	In particular, we establish in the next result a stronger relation when the finite morphism is the quotient morphism
	$\rho\colon \CC^2 \to \CC^2/G$, induced by the action of a finite abelian group $G\subset\GL(2,\CC)$, and the divisor
	associated with the differential form in $\CC^2$ is trivial, i.e.,~$W = 0$.
	
	\begin{theoremintro}\label{Thm:Comparison_Ztop}
	Let $G\subset\GL(2,\CC)$ be a finite abelian group and consider the natural covering $\rho: \CC^2\to\CC^2/G$, with associated branch divisor $B_\rho\subset \CC^2/G$. 
	Fix nonzero effective $\QQ$-divisors $D$ on $\CC^2$ and $\ol{D}$ on $\CC^2/G$ such that $D = \rho^* \ol{D}$.
	Then
	\begin{equation}\label{eq:equality-top}
	\P^{\mot}_{(\CC^2/G,[0])}(\ol{D},-B_\rho) =
	\P^{\topo}_{(\CC^2/G,[0])}(\ol{D},-B_\rho) \stackrel{(\theequation)}{=} \P^{\topo}_{(\CC^2, 0)}(D,0)
	= \P^{\mot}_{(\CC^2, 0)}(D,0). \nonumber \refstepcounter{equation}
	\end{equation}
	Moreover, the order of any pole $s_0$ is the same in any of the zeta functions.
	\end{theoremintro}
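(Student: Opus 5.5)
We prove the chain of equalities by combining Theorem~\ref{Thm:Comparison_Zmot} with an explicit control of the motivic poles on $\CC^2$ and a non-cancellation argument on the topological side.

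\textbf{Step 1: reduce to the setting of Theorem~\ref{Thm:Comparison_Zmot}.} We may assume $G$ is small. Indeed, $G$ factors through its subgroup $H$ generated by pseudo-reflections, and $\CC^2/H\cong\CC^2$. Since $G$ is abelian, we may diagonalise $G$, so that $H$ is generated by diagonal reflections and $\CC^2\to\CC^2/H$ is a map $(x,y)\mapsto(x^a,y^b)$ branched along the axes. For the quotient map $\rho$ we have $\Ram_\rho=\rho^*B_\rho$ as $\QQ$-divisors, with the standard convention $B_\rho=\sum(1-1/e_i)\ol{B}_i$. Hence the choice $\ol W=-B_\rho$ gives $W=\rho^*\ol W+\Ram_\rho=0$. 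Moreover $-B_\rho$ has multiplicities in $(-1,0]$, so it has no logarithmic poles. Theorem~\ref{Thm:Comparison_Zmot} then yields
\[
\P^{\mot}_{(\CC^2/G,[0])}(\ol D,-B_\rho)\subset\P^{\mot}_{(\CC^2,0)}(D,0),
\]
with orders of order-two poles preserved. Together with \eqref{eq:top-mot} on both sides, it then suffices to prove two things:
\begin{enumerate}
\item[(a)] $\P^{\mot}_{(\CC^2,0)}(D,0)\subset\P^{\topo}_{(\CC^2/G,[0])}(\ol D,-B_\rho)$, with orders;
\item[(b)] $\P^{\topo}_{(\CC^2,0)}(D,0)$ equals the same set.
\end{enumerate}

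\textbf{Step 2: identify the candidate poles on both sides via a $G$-equivariant resolution.} Since $D=\rho^*\ol D$ is $G$-invariant and $G$ is abelian, we take the minimal embedded resolution $\pi$ of $\supp D$ in $\CC^2$. It is $G$-equivariant and induces an embedded $\QQ$-resolution $\ol\pi$ of $\ol D\cup B_\rho$ on $\CC^2/G$, with $\wt\rho\colon X\to X/G$. Using the proportionality of numerical data (Theorem~\ref{thm:MultAlphas_proportional}), each component $E$ of $\pi$ and its image $\ol E$ give the same candidate pole $-\nu_E/N_E=-\ol\nu/\ol N$. By Theorem~\ref{Thm:RodVeysQres}, the motivic poles on $\CC^2$ (with $W=0$) are given by strict transforms, rupture components, and the rational cycle structure; since $\CC^2$ is smooth there are no cycles and all exceptional curves are rational. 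The motivic poles are therefore $-1/N$ for strict transforms and $-\nu_E/N_E$ for rupture $E$, and order two occurs exactly when two such components with equal ratio intersect.

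\textbf{Step 3: the key step, non-cancellation on the topological level downstairs.} Here we show that every such candidate really is a pole of $\Ztopp{(\CC^2/G,[0])}(\ol D,-B_\rho;s)$. Equation~\eqref{eq:ztop-equality} is not available, because $\ol D$ need not be normal crossing. Instead we apply the formula of \cite[Theorem 3]{LCMMVVS:formulas} componentwise, comparing the contribution of $\ol E^\circ$ in the $\QQ$-resolution with that of $\wt\rho^{-1}(\ol E^\circ)$. Since $G$ acts on $E^\circ$ (minus finitely many points) with constant stabiliser, $\chi(\ol E^\circ)=\chi(E^\circ)/[G:G_E]$, and the local quotient-singularity factors at the points of $\ol E$ compensate so that the total residue at $s_0$ downstairs equals $|G|$ times the residue upstairs. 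In fact we aim to show that the full identity $\Ztopp{(\CC^2/G,[0])}=|G|\,\Ztopp{(\CC^2,0)}$ holds here, by running the proof of \cite[Corollary 3.4]{LCMMVVS:formulas} with a $G$-equivariant resolution instead of the normal crossing hypothesis. This gives (b) immediately.

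For (a), it remains to show that the topological zeta function on smooth $\CC^2$ has every motivic pole from Step 2. Strict transforms give poles since the residue is $\chi$ of the non-compact part. For a rupture component $E$, the residue is $\chi(E^\circ)=2-r<0$ (with $r\geq3$ the number of intersection points) plus contributions via $\alpha$-values of the neighbours. We would then use the $\alpha$-relations of Section~\ref{subsec:NormalSurf}, which give $\alpha$-values in $(0,1)$ for neighbours when $W=0$ in the smooth case, to prove nonvanishing, in the spirit of Veys' known result for curves on smooth surfaces. Order-two poles are handled by the leading coefficient $1/(N_EN_F)$ of the intersection point, which is nonzero. I expect this non-vanishing of rupture residues, together with its transfer through the $|G|$-relation (Step 3), to be the main obstacle.
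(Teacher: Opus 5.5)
Your reduction is sound: Theorem~\ref{Thm:Comparison_Zmot} together with \eqref{eq:top-mot} does reduce everything to your (a) and (b). The gap is in Step~3, which carries all the weight, because it rests on a false claim.

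\textbf{The $|G|$-identity fails.} The identity $\Ztopp{(\CC^2/G,[0])}(\ol D,-B_\rho;s)=|G|\,\Ztopp{(\CC^2,0)}(D,0;s)$ does not hold for a $G$-equivariant resolution when $G$ permutes branches of $D$. This is exactly why Theorem~\ref{Thm:dZtop} assumes every analytically irreducible component is $G$-invariant. Example~\ref{ex:noQNC} already gives $\frac{1}{(Ns+1)^2}$ upstairs versus $\frac{3Ns+4}{(Ns+1)^2}$ downstairs.

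The failure is not confined to $\pi=\id$. Take $G=\mu_4$ acting by $(x,y)\mapsto(ix,-iy)$, so $\CC^2/G=X(4;1,3)$ and $B_\rho=0$, and take $D\colon(x^2+y^2)(x^2+2y^2)=0$. One blow-up resolves $D$, and $E$ has data $(4,2)$. Its ramification index is $2$, since $-1$ fixes $E$ pointwise. So $\ol E$ has data $(2,1)$, carries two $A_1$ points, and meets two strict transforms. Then \eqref{Eqn:ZtopQres} gives
\[
\Ztopp{(\CC^2,0)}(D,0;s)=\frac{1-s}{(2s+1)(s+1)},\qquad \Ztopp{(X(4;1,3),[0])}(\ol D,-B_\rho;s)=\frac{2(s+2)}{(2s+1)(s+1)}.
\]
The residues at $-\tfrac12$ and $-1$ are multiplied by $2$ and by $1$ respectively, not by $|G|=4$. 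Your formula $\chi(\ol E^\circ)=\chi(E^\circ)/[G:G_E]$ also ignores fixed points of the action on $E$. These become singular points of $\ol X$ lying in $\P_{\ol\pi}$.

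Because the factor relating upstairs and downstairs contributions varies from component to component, a nonzero residue upstairs tells you nothing about cancellation among several components of $\ol\pi$ sharing a candidate pole. So neither (b) nor, through it, (a) is established.

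\textbf{What is missing.} You need a cancellation-free description of the poles downstairs. The paper gets it from Loeser's bound. For the quotient $\ol\pi$ of the minimal resolution, Proposition~\ref{Prop:AlphasAbs1} gives $-1\le\ol\alpha_Q<1$ on $\P_{\ol\pi}$. This uses $\ol\alpha=\alpha/n$ from Theorem~\ref{thm:MultAlphas_proportional}, and $\ol\alpha=1/(e_k\ol m)$ or $1/\ol m$ at branch curves and isolated singular points.

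This $\alpha$-condition forces the ordered-tree structure of $\Gamma_{\ol\pi}$, hence Theorem~\ref{thm:VeysPolesZtop}: downstairs poles are exactly the ratios of strict transforms and of exceptional curves carrying at least three points of $\P_{\ol\pi}$. Rupture components then transfer in both directions:
\begin{itemize}
\item downstairs to upstairs by Theorem~\ref{thm:AtMostTwoAlphas};
\item upstairs to downstairs by Proposition~\ref{prop:losingPoles}(3), whose bad case (b) needs an $\alpha$-value equal to $d>1$, impossible since $\alpha<1$ upstairs.
\end{itemize}
Motivic versus topological on each side follows from Theorem~\ref{Thm:RodVeysQres} with Theorem~\ref{thm:VeysPolesZtop}, respectively Veys' theorem on $\CC^2$. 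Cite the latter rather than re-derive it: your claim that all neighbours have $\alpha\in(0,1)$ is false, since one may be negative.

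\textbf{Smaller points.} The case $\pi=\id$ must be handled separately (Section~\ref{sec:BadCases}). There $\ol\pi=\id$ need not be a $\QQ$-resolution, contrary to your Step~2. Your reduction to small $G$ is unjustified, since quotienting by reflections creates a nonzero $W$ upstairs, though you never actually use it.
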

	
	In order to prove this result, we first start with the minimal embedded resolution $\pi$ of $(\CC^2,D)$, and then we construct an
	embedded $\QQ$-resolution $\ol{\pi}$ of $(\CC^2/G,\ol{D})$, so that the diagram
	\begin{equation*}
		\begin{tikzcd}
			\CC^2 \arrow[d, twoheadrightarrow, "\rho"] 	& \arrow[l, "\pi" above] X \arrow[d, dashed, twoheadrightarrow, "\wt{\rho}"]\\
			\CC^2/G & \arrow[l, dashed, "\ol{\pi}" above] X/G =:\ol{X}
		\end{tikzcd}
	\end{equation*}
	commutes, and this allows us to apply Theorem~\ref{thm:MultAlphas_proportional}.  It turns out that $\ol{\pi}$ verifies the so-called $\alpha$-condition, see Definition~\ref{def:alpha_condition}, which is based on the concept of the $\alpha$-values, given 
	in Definition~\ref{Def:Alpha_Qres}. These are numbers that appear naturally when writing down the residue of a candidate pole. With a careful study
	of these  residues, % corresponding to each divisor using the $\alpha$-values,
 we show Theorem~\ref{thm:VeysPolesZtop}, which 
	generalizes~\cite[Theorem~4.2~and~4.3]{Veys95}, leading to equality~\eqref{eq:equality-top}, see Section~\ref{subsec:ProofPolesZtop}. In the same section we also show that, for $S = \CC^2/G$ and $W = -B_\rho$, with $\rho: \CC^2\to\CC^2/G$ the natural covering,
	expression~\eqref{eq:top-mot} is in fact an equality. This
	way the chain of equalities in the statement follows. Notice that, when $W\neq 0$, the morphism $\ol{\pi}$ does not verify the $\alpha$-condition, as Example~\ref{ex:noPoleUpstairs} shows. This is one of the motivations to study Theorem~\ref{Thm:Comparison_Ztop} only for $W=0$.
	
	The proofs of these two main results presented in this work are essentially different, since
	Theorem~\ref{Thm:Comparison_Zmot} starts with the resolution $\ol{\pi}$ downstairs, while
	for Theorem~\ref{Thm:Comparison_Ztop} we first need the resolution $\pi$ upstairs.
	However, they both use the same strategy in spirit, namely one completes a diagram and
	applies Theorem~\ref{thm:MultAlphas_proportional} to relate the numerical data involved
	in the construction.
			
	Going back to our original motivation, we show that equality~\eqref{eq:ztop-equality}
	is true when the finite morphism is the quotient map $\rho: \CC^2 \to \CC^2/G$, and moreover every analytically
	irreducible component of both $D$ and $W$ is $G$-invariant.
	
	\begin{theoremintro}\label{Thm:dZtop}
	Let $G\subset\GL(2,\CC)$ be a finite abelian group of order $d$, and consider the natural covering map $\rho: \CC^2\to\CC^2/G$. 
	Fix two $\QQ$-Weil divisors $\ol{D}$ and $\ol{W}$ on $(\CC^2/G,[0])$,  with $\ol{D}$ effective and nonzero, and $\ol{W}$ without logarithmic poles outside $\ol{D}$. Put $D = \rho^* \ol{D} $ and $W =\rho^* \ol{W} + \Ram_\rho$ on $(\CC^2,0)$. %
	If every analytically irreducible component of both $D$ and $W$ is $G$-invariant, then
	$$\Ztopp{(\CC^2/G,[0])}(\ol{D},\ol{W}; s) = d\cdot \Ztopp{(\CC^2,0)}(D,W; s).$$
	\end{theoremintro}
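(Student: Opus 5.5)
The plan is to reduce to the normal crossings case, where the equality is \cite[Corollary 3.4]{LCMMVVS:formulas}, by resolving upstairs $G$-equivariantly and pushing the resolution down.

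\emph{Step 1 (resolutions).} Take the minimal embedded resolution $\pi\colon X\to\CC^2$ of $\supp(D)\cup\supp(W)$. Since $G$ preserves every analytic branch of $D$ and $W$, and the minimal resolution is unique, the $G$-action lifts to $X$. Moreover, $G$ fixes every exceptional curve and every strict transform: each blow-up centre is an infinitely near point on an invariant branch, hence it is $G$-fixed. Put $\ol{X}=X/G$ and $\ol{\pi}\colon \ol{X}\to\CC^2/G$. As in the construction preceding Theorem~\ref{Thm:Comparison_Ztop}, $\ol{\pi}$ is an embedded $\QQ$-resolution of $(\CC^2/G,\ol{D}\cup\ol{W})$. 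The total transform upstairs is $G$-invariant with every component invariant, so locally at each point of $X$ the stabilizer is abelian and acts diagonally in coordinates adapted to the normal crossing divisor. Hence locally $\ol{X}$ is $\CC^2/H$ with normal crossing image divisors.

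\emph{Step 2 (numerical data).} By Theorem~\ref{thm:MultAlphas_proportional}, for each component $E$ of $\pi^*(D)$ mapping to $\ol{E}$ the numerical data $(N_E,\nu_E)$ for $(D,W)$ and $(\ol{N}_{\ol{E}},\ol{\nu}_{\ol{E}})$ for $(\ol{D},\ol{W})$ are proportional with the same ratio, via the ramification index $e_E$ of $\wt{\rho}$ along $E$. The quotient $\nu/N$ therefore coincides, and the relation $W=\rho^*\ol{W}+\Ram_\rho$ is exactly what makes the relative canonical divisors match: $K_X-\pi^*W$ is the pullback of $K_{\ol{X}}-\ol{\pi}^*\ol{W}$ plus $\Ram_{\wt\rho}$.

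\emph{Step 3 (stratification).} Write $\Ztopp{(\CC^2/G,[0])}(\ol{D},\ol{W};s)$ via the $\QQ$-resolution formula as a sum over strata $\ol{E}_I^\circ$ of $\ol{\pi}^{-1}([0])$, with local contributions computed on the charts $\CC^2/H$. Each chart is treated by \cite[Corollary 3.4]{LCMMVVS:formulas}: locally the contribution upstairs times $|H|$ equals the contribution downstairs. The remaining issue is Euler characteristics of strata. Since $G$ fixes each $E$, the map $E^\circ\to\ol{E}^\circ$ is a cyclic cover of $\PP^1$ by $\PP^1$ of degree $d/|H_E|$, where $H_E$ is the generic stabilizer. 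Here we use the motivic-to-topological specialization: the fibre over a point of a stratum consists of $d/|\mathrm{Stab}|$ points. Multiplying by the local factor $|\mathrm{Stab}|$ from Corollary 3.4 then gives the uniform factor $d$, because $\chi(E_I^\circ)$ over $\chi(\ol{E}_I^\circ)$ equals $d/|\mathrm{Stab}|$ on strata with constant stabilizer. Equivalently, we can argue with the pushforward measure, $\int_{\CC^2} = d\cdot$ (integral against the pulled-back form), directly on the level of arcs.

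\emph{Main obstacle.} The hard part is showing that the stratification is compatible: stabilizers must be constant on each open stratum $E_I^\circ$, and the local charts must have the required normal crossing form. Stabilizers can jump only at fixed points of the $G$-action on $E\cong\PP^1$. Such points are the two points where the diagonalized action has eigenlines, and I expect them to be intersection points with other components. To check this, I would use the invariance of all branches together with minimality of $\pi$. Where this fails, I would refine the stratification by adding these points, whose $\chi$ contributions then cancel consistently on both sides.
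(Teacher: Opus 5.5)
Your architecture (equivariant minimal resolution upstairs, quotient $\ol X=X/G$, stratum-by-stratum comparison of \eqref{Eqn:ZtopQres}) is the paper's. However, the bookkeeping in Step~3 is wrong, and that step is where the proof actually happens.

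For a stratum $S$ upstairs with stabilizer $H$ and image $\ol S$, your ratio $\chi(S)/\chi(\ol S)=d/|H|$ is correct. The local factor is indeed $|H|$: on an open curve stratum $(\ol N,\ol\nu)=(N,\nu)/e_E$ with $e_E=|H_E|$, and at a point $|H|=e_ie_j\ol m$. But these two facts combine to
$$\text{contribution of }\ol S=\chi(\ol S)\cdot|H|\cdot(\text{local upstairs term})=\tfrac{|H|^2}{d}\cdot(\text{contribution of }S).$$
This is $d$ times the upstairs contribution only when $H=G$.

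Step~3 as written never uses the hypothesis that every branch is $G$-invariant, so it cannot be right. Apply it to the one-blow-up $\QQ$-resolution of Example~\ref{ex:noQNC} (Figure~\ref{fig:JorgeEx2}). There, $E$ minus four points (stratum with $\chi=-2$) and the orbit $\wh D\cap E$ (two points, $\chi=2$) both have stabilizer $\mu_2$. Your argument would give $4/(Ns+1)^2$, whereas the true value is $(3Ns+4)/(Ns+1)^2$. Your arc-level shortcut ``$\int_{\CC^2}=d\cdot\int$'' fails for the same reason: it cannot see the hypothesis that makes the theorem true.

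The missing idea is that, under the invariance hypothesis, every stratum with $H\neq G$ has Euler characteristic zero. The paper obtains this from a dichotomy on exceptional curves.
\begin{itemize}
\item \textbf{(T1)} $e_E=d$. Then $G$ fixes $E$ pointwise, $E\to\ol E$ is bijective, and the factor is $e_E=d$.
\item \textbf{(T2)} $e_E<d$. Then $G$ acts on $E\cong\PP^1$ through a nontrivial cyclic group with exactly two fixed points (Proposition~\ref{prop:losingPoles}). All points of $\P$ on $E$ and $\ol E$ lie over these two points. For intersection points, this is because two invariant components meet in a single, hence $G$-fixed, point. For singular points of $\ol X$, it follows from $dm=re_1e_2\ol m$ (Proposition~\ref{prop:relation_dme}), which also shows $\ol X$ is smooth along (T1) curves.
\end{itemize}
After adding curvettes (data $(0,1)$, $e=1$) at fixed points lying on no other component, the open parts of (T2) curves are $\CC^*$ on both sides. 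So their ``wrong'' factor $|H|^2/d$ multiplies zero. Moreover, every point of $\P_{\ol\pi}$ has a single $G$-fixed preimage with $e_ie_j\ol m_Q=d$.

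Your ``main obstacle'' paragraph points at the right points. But the mechanism that makes the proof work is $\chi=0$ on the complementary strata, not constancy of stabilizers or a cancellation between the two sides. Also, the fixed points need not be intersection points: one may be an isolated quotient singularity of $\ol X$.

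A minor point: you must resolve $\supp(D)\cup\supp(W)\cup\supp(\Ram_\rho)$, not just $\supp(D)\cup\supp(W)$. For example, for $\ol W=-B_\rho$ one has $W=0$, while $\supp(\ol W)$ contains branch curves that $\ol\pi$ must also resolve.
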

	
	\smallskip

	 The example developed in Section~\ref{subsec:orbit-pathology} shows that the condition of each irreducible component being $G$-invariant is necessary, even in the case $W=0$.
	 
	 \medskip
		
	The structure of the paper is as follows. In Section~\ref{sec:settings} some preliminaries and settings about
	embedded $\QQ$-resolutions and motivic and topological zeta functions are presented. We also include the statement of
	Theorem~\ref{Thm:RodVeysQres}, whose proof is postponed until the end of the manuscript in order not to lose the main focus of the paper. To finish this section, we develop  relations between the $\alpha$-values and the genus of the exceptional components, in the context of  a $\QQ$-resolution.
	Section~\ref{sec:RamCovers} is devoted to finite morphisms, where we establish in Theorem~\ref{thm:MultAlphas_proportional}
	some relations between the numerical data of the corresponding resolutions. 
	The two main results of this work, namely Theorems \ref{Thm:Comparison_Zmot} and \ref{Thm:Comparison_Ztop},
	are proven in Sections~\ref{sec:Zmot} and~\ref{sec:Ztop}, respectively. A proof of Theorem~\ref{Thm:dZtop} is provided at the end of Section~\ref{Sec:Proportionality}. We take the opportunity to compare the log canonical models of divisors on normal surfaces that are connected by a finite morphism, and the impact on zeta functions, in Section~\ref{sec:log_canonical}.
	Finally, we prove in Section~\ref{sec:RodVeysThm} the mentioned generalization to the case $W\neq 0$ of a theorem about residues, namely Theorem~\ref{Thm:RodVeysExt}.

	% =========================================================
	% Preliminaries and settings
	% =========================================================
	\bigskip
	\section{Zeta functions of surface quotient singularities}\label{sec:settings}
	This section introduces the framework for the remainder of the paper. First, we collect some basic facts about quotient singularities and $\QQ$-resolutions. In Section~\ref{Subsec:TopMotZFunctions}, we
	give the definitions of the motivic and the topological zeta functions and present some formulas in terms of $\QQ$-resolutions. After Definition~\ref{Def:Alpha_Qres} we formulate Theorem~\ref{Thm:RodVeysQres}, characterizing the poles of the motivic zeta functions in terms of  
	such resolutions. Finally, in Section~\ref{subsec:NormalSurf} we use a generalized adjunction formula for Weil divisors on normal surfaces to present useful arithmetical relations between numerical data of exceptional divisors, adapted to the context of $\QQ$-resolutions. 
	
	\subsection{Preliminaries on quotient singularities and embedded \texorpdfstring{$\QQ$}{QQ}-resolutions}\label{sec:embQres}
	
	A $V$-\emph{manifold} (also called an \emph{orbifold}) of dimension $2$ is a complex analytic surface $X$, 
	admitting an open covering~$\{U_i\}_i$, where each $U_i$ is analytically isomorphic to a quotient singularity, 
	see e.g.~\cite{Satake}. In our case we will be dealing with quotients $\CC^2/\mu_m$ by a finite cyclic subgroup  
	$\mu_m$ of $\GL_2(\CC)$ with $m$ elements, where  $\zeta\in\mu_m$ acts as
	\[
	\zeta\cdot (x,y) = (\zeta^a x, \zeta^b y),
	\]
	for some $a,b\in\ZZ$. %
	We denote this type of singularity by $X(m;a,b)$. If $\gcd(m,a,b)=1$, then the action is faithful, it is also free if $\gcd(m,a)=\gcd(m,b)=1$. If these last conditions are verified, we say that the action of $\mu_{m}$ is \emph{small}. Note that being small is equivalent to the natural covering $\CC^2\to X(m;a,b)$ being unramified. In general, the ramification divisor of the covering is $(e_1-1)L_1 + (e_2-1)L_2$, where $e_1=\gcd(m,b)$ and $e_2=\gcd(m,a)$, and $L_1$, $L_2$ are the  divisors defined by $\set{x=0}$ and $\set{y=0}$, respectively.
	
	Some isomorphisms between quotient singularities are given below. Note, however, that the isomorphisms involved do not necessarily respect  the orbifold structures, see e.g.~\cite[Lemma 1.2]{AMO14A}.
	\begin{enumerate}[label=(\alph*)]
	  \item  $X(m;a,b)= X(km;ka,kb)$ for any positive integer $k$;
	  \item  $X(m;a,b)= X(m;ka,kb)$ for any positive integer $k$ satisfying $\gcd(m,k)=1$;
	  \item  $X(m;a,b) \cong X(m;b,a)$;
	  \item  $X(m;a,b) \cong X(m/e;a,b/e)$, where $e=\gcd(m,b)$. %via the map $[(x,y)]\mapsto [(x,y^e)]$.
	\end{enumerate}
	The last two isomorphisms are given by $[(x,y)]\mapsto [(y,x)]$ and $[(x^e,y)]\mapsto [(x,y)]$, respectively. By using these isomorphisms and group operations, it is possible to restrict $X(m;a,b)$ to a singularity of type $X(m;1,q)$ with $1\leq q\leq m-1$ and $\gcd(m,q)=1$, also known as an \emph{$A_{m,q}$-singularity} or \emph{Hirzebruch–Jung singularity}, see e.g. \cite{HNK71}.
	We say that a point $P$ in a $V$-manifold $X$ \emph{has order $m$} if $X$ is locally an $A_{m,q}$-singularity at $P$.
	
	\begin{remark}\label{rmk:Anq}
		Several characterizations of $A_{m,q}$-singularities can be found in e.g.~\cite[Section~2.3]{Nemethi22:book}. In particular, they can be obtained as the normalization of the singular surface $X_{m,q}:=\set{uv^{m-q}=w^m}\subset\CC^3$ via the map
	  \begin{equation*}
	    \begin{array}{ccc}
	      X(m;1,q) & \longrightarrow & X_{m,q}\\
	      ~[(x,y)] & \longmapsto & (x^m,y^m,xy^{m-q}).
	    \end{array}
	  \end{equation*}
	\end{remark}
	
	\begin{remark}\label{rmk:AllAbelianAreCyclic}
	  Any quotient space $\CC^2/G$ by a finite abelian group $G\subset\GL_2(\CC)$ is isomorphic to some $X(d;a,b)$, where $d\mid |G|$, see~\cite[Lemmas~1.1\&1.2]{AMO14A}. Moreover, $d$, $a$ and $b$ can be chosen such that $\gcd(d,a,b)=1$, and moreover also such that $\gcd(d,a)=\gcd(d,b)=1$ if we make use of isomorphisms of type $[(x^e,y)]\mapsto [(x,y)]$ as in (d) above. Note that, if $\gcd(d,a,b)=1$ and $e_1:=\gcd(d,b)$, $e_2:=\gcd(d,a)$, then
	  \[
	    \begin{array}{ccc}
	      X(d; a, b) & \longrightarrow &  X\left(\frac{d}{e_1 e_2}; \frac{a}{e_2}, \frac{b}{e_2}\right)\\[.5em] 
	      
	      [(x,y)]	& \longmapsto & \left[\left(x^{e_1},y^{e_2}\right)\right]
	    \end{array},
	  \]
	  is an isomorphism.
	\end{remark}
	
	Using quotient singularities, one can generalize the notion of embedded resolution of singularities given by a finite composition of blow-ups. Following~\cite{Steenbrink77}, a divisor $D$ on a surface $X$ has
	\emph{$\QQ$-normal crossing} if it is analytically isomorphic to a germ given by
	\begin{equation*}
	  x^{k_1}y^{k_2} \colon X(m;a,b) \to \CC.
	\end{equation*}
	It should be stressed that the quotient space $X(m;a,b)$ carries a particular choice of coordinates coming from a diagonal action on $\CC^2$. 

\begin{defi}\label{defi:Qres}
		An \emph{embedded $\QQ$-resolution} of a $\QQ$-Weil divisor $D$ on a normal
surface germ $(S,o)$ is a proper bimeromorphic map $\pi:X\to (S,o)$ such that
\begin{enumerate}
	\item $X$ is a $V$-manifold with abelian quotient singularities,
	\item the restriction of $\pi$ to $X\setminus \pi^{-1}(D)$ is an isomorphism,
	\item $\pi^{-1}(D)$ is a $\QQ$-normal crossing Weil divisor  on $X$.
\end{enumerate}
Moreover, $\pi$ is called \emph{minimal} if it does not factorize through another embedded $\QQ$-resolution of $(S,D)$. Since minimal usual embedded resolutions exist for surfaces, minimal embedded $\QQ$-resolutions also exist in this context.
\end{defi}

Recall that on a $V$-manifold with cyclic quotient singularities, $\QQ$-Weil divisors and $\QQ$-Cartier divisors coincide, see e.g.~\cite{Ful93,AMO14A}.

\medskip
	
A useful tool to find an embedded $\QQ$-resolution is a weighted blow-up. Set $(p,q)\in\ZZ_{\geq1}^2$ and consider the weighted projective line~$\PP^1_{(p,q)}$, i.e., the quotient of $\CC^2\setminus 0$ by the $\CC^*$-action $(x,y)\sim (t^p x,t^q y)$.  The $(p,q)$-\emph{blow up} of $X(m;a, b)$ at the origin is the proper birational map $\pi_{(p,q)}: X\to X(m;a,b)$, where
	\[
	  X = \left\{ \left((x,y),[u:v] \right) \in \CC^2 \times \PP^1_{(p,q)} \mid (x,y) = (t^{p}u, t^{q}v), \ t \in \CC \right\} \Big/ \mu_m,
	\]
	and $\pi_{(p,q)}$ maps $\left((x,y),[u:v] \right)$ to $[(x,y)]$. Here the action of $\mu_m$ is extended to $\PP^1_{(p,q)}$ as $\zeta \cdot [u:v] := [\zeta^a u : \zeta^b v]$.
	The surface $X$ is covered by two open sets $X = U \cup V$, where $U = \{ u\neq 0 \}$
	and $V = \{ v \neq 0 \}$. The orbifold charts are given by
	\begin{equation}
	\begin{aligned}\label{eq:chartC2G}
	& \varphi: X\left(\begin{array}{c|cc}
	p & -1 & q \\
	pm & a & pb-qa
	\end{array}\right) =: X_1
	\longrightarrow U,
	\quad [(x,y)] \mapsto ((x^p,x^qy),[1:y]), \\
	& \psi: X\left(\begin{array}{c|cc}
	q & p & -1 \\
	qm & qa-pb & b
	\end{array}\right) =: X_2
	\longrightarrow V,
	\quad [(x,y)] \mapsto ((xy^p,y^q),[x:1]).
	\end{aligned}
	\end{equation}
	Here $X\left(\begin{array}{c|cc}
	m_1 & a_1 & b_1 \\
	m_2 & a_2 & b_2
	\end{array}\right)$ denotes the linear action of $\mu_{m_1}\times\mu_{m_2}$ on $\CC^2$ given by
	\[
		(\zeta_1,\zeta_2)\cdot (x,y) := (\zeta_1^{a_1}\zeta_2^{b_1}x, \zeta_1^{a_2}\zeta_2^{b_2}y).
	\]
	Note that the exceptional divisor $E_{(p,q)} = \pi_{(p,q)}^{-1}(0)$ is identified with $\PP_{(p,q)}^1/\mu_m$. Also, $\PP_{(p,q)}^1\cong \PP^1\cong \PP^1/\mu_m$, so all of them are smooth varieties. For more details about this construction, see e.g.~\cite{AMO14B}.

	\subsection{Topological and motivic zeta functions} \label{Subsec:TopMotZFunctions}
	
	We present in this section two related types of zeta functions associated to a pair $(D,W)$ on $(S,o)$: the topological and motivic zeta functions. 
	Our formulation here is inspired by~\cite{Veys07} and~\cite{NemethiVeys12}.

\begin{setting}\label{setting1}
	Let $(S,o)$ be a normal algebraic surface germ. Take a nonzero effective $\QQ$-Weil divisor $D$ on $(S,o)$, and a $\QQ$-Weil divisor $W$ on $(S,o)$, such that any component  of $\supp(W)\setminus \supp(D)$ has associated multiplicity different from $-1$ in $W$. We say that {\em $W$ has no logarithmic poles outside $D$}.
\end{setting}
	We set $M:=\supp(D)\cup \supp(W)$ and fix a usual embedded resolution $\pi\colon X\to S$ of $M$ in $(S,o)$; note that this implies that both $\pi^*D$ and $\pi^*W$ are normal crossing $\QQ$-divisors on~$X$.
	Let $\{E_i\}_{i\in I}$ be the irreducible components (exceptional components and strict transforms) of~$\pi^{-1} (M)$.  
	It is convenient to use the partition $I=I_e\cup I_s$ to distinguish the components of the exceptional divisor and those of the strict transform of~$\pi^{-1}(M)$, respectively. It is also customary to denote by $E_i^\circ$, for $i\in I_e$, the set $E_i \setminus \bigcup_{j\in I, j\neq i} E_j$. 
	
In the sequel, we denote the canonical divisor (class) of a normal variety by $K_V$. The {\em relative canonical divisor} of $\pi$, denoted $K_\pi$, is the unique representative of $K_X-\pi^*K_S$ that is supported on its exceptional locus.  Writing 
\begin{equation}\label{eqn:canonical1}
K_{\pi} = \sum_{i \in I_e} (b_i-1)E_i,
\end{equation}
 the $\set{b_{i}}_{i\in I_e}$ are called {\em log discrepancies} of $\pi$.
Denote further 
	\[N_i=\mult_{E_i}\pi^\ast D \qquad\text{and}\qquad \nu_i=b_i+ \mult_{E_i}\pi^\ast W.
	\]
	Then we have  the following relations:
	\begin{equation}\label{eqn:canonical2}
		\begin{aligned}
			\pi^{*} W &= \widehat{W} + \sum_{i \in I_e} (\nu_i - b_i) E_i, \\
			\pi^{*} D &= \widehat{D} + \sum_{i \in I_e} N_i E_i,
		\end{aligned}
	\end{equation}
	where $\wh{D}$ and $\wh{W}$ are the strict transform of $D$ and $W$. We will call the pair $(N_i,\nu_i)$ the \emph{numerical data} of $E_i$,  $i\in I$, associated with $(D,W)$. For the sake of brevity, we will say that $\pi$ is a resolution of the pair $(D,W)$. Note that the numbers $\nu_{i}$ are in general rational, and can be positive, negative or zero. The numbers $N_i$ are non negative rational numbers, which are integers if $D$ is a usual effective divisor.
	
	\subsubsection{Topological zeta function}\label{SubSec:Ztop}
	
	We use the symbol $\chi(\cdot)$ to denote the topological Euler characteristic. For $(S,o)$ and $(D,W)$ as in Setting~\ref{setting1}, the \emph{local topological zeta function} of $(D,W)$ on $(S,o)$ is defined by
	\begin{equation}\label{Def:TopZeta}
		\Ztopp{(S,o)}(D,W; s):= \sum_{i\in I_e} \frac{\chi(E_i^\circ)}{\nu_{i}+N_{i}s}+\sum_{\{i,j\}\subseteq I} \frac{\chi(E_i\cap E_j)}{(\nu_{i}+N_{i}s)(\nu_{j}+N_{j}s)}.
	\end{equation}
	A standard calculation shows that this expression is invariant under a blow-up, and hence it does not depend on the chosen embedded resolution $\pi$. 
	Notice that the curves $E_i$, $i\in I_e$, appearing in this definition can have arbitrary genus $g_i$, and in particular one has the following identity:
	\begin{equation}\label{Eq:CharOpenStratum}
	\chi(E_i^\circ)=2-2g_i-\Card\left(E_i\cap \left(\cup_{j\neq i}E_j\right) \right).
	\end{equation}
	It is also clear that $\chi(E_i\cap E_j)=\Card(E_i\cap E_j).$

	\subsubsection{Motivic zeta function}\label{SubSec:Zmot}
	
	For the definition of the motivic zeta function, we first introduce briefly the Grothendieck ring of varieties over $\CC$. Denote by $\KVarC$ the Grothendieck ring of algebraic varieties over $\CC$, that is, the free abelian group generated by the symbols $[V]$, where $V$ is a variety, subject to the relations $[V]=[V']$ if $V \cong V'$, and $[V] = [V \setminus V'] + [V']$ if $V'$ is Zariski closed in $V$. Its ring structure is given by $[V] \cdot [V'] := [V \times V']$.  It is customary to denote $\LL:=[\AA^{1}]$.

Let  $(S,o)$ and $(D,W)$ be as in Setting~\ref{setting1}. The \emph{local motivic zeta function} of the pair $(D,W)$ on $(S,o)$ is given 
in terms of the resolution $\pi$ by the following formula~\cite{RodriguesVeys03,Veys07,NemethiVeys12}:
	\begin{equation}
		\label{Eq:ZmotFormula}
		\begin{aligned}
			\Zmott{(S,o)}(D,W;s) = \LL^{-2}&\sum_{i\in I_e} [E_i^\circ]\frac{\LL-1}{\LL^{\nu_i+N_i s}-1}\\ &+ \LL^{-2} \sum_{\set{i,j}\subset I} [E_i\cap E_j]\frac{(\LL-1)^2}{(\LL^{\nu_i+N_i s}-1)(\LL^{\nu_j+N_j s}-1)}.
		\end{aligned}
	\end{equation}
Here $\LL^{-s}$ should be considered as a formal variable $T$. This convention comes from the analogy with the $p$-adic Igusa zeta function, which is a rational function in $p^{-s}$~ \cite{Igu74}. Then the expression above is the traditional way to write
$$
\LL^{-2}\sum_{i\in I_e} [E_i^\circ]\frac{(\LL-1)T^{N_i}}{\LL^{\nu_i} - T^{N_i}}\\ + \LL^{-2}\sum_{\set{i,j}\subset I} [E_i\cap E_j]\frac{(\LL-1)^2 T^{N_i+N_j}}{(\LL^{\nu_i} - T^{N_i})(\LL^{\nu_j}-T^{N_j})}.
$$
%Denoting $r$ for the common denominator of all occurring $N_i$ and $\nu_i$, 
This last expression lives in the localization  $\mathcal{A}_T$ of a polynomial ring $\KVarC[\LL^{-1/r}][T^{1/r}]$ (for some appropriate divisible enough $r\in \ZZ_{>0}$),  with respect to the multiplicative system generated by the elements $\LL^{a/r}-T^{b/r}$, for $a\in \ZZ$ and $b\in \ZZ_{\geq 0}$ with $(a,b)\neq (0,0)$; for more precise details see~\cite[Section~4]{RodriguesVeys03}.

Again, classical arguments can be used to verify that the defining expression~\eqref{Eq:ZmotFormula} does not depend on the chosen embedded resolution.

\begin{remark}
	In some cases, these expressions come from an intrinsically defined converging \emph{motivic integral}. 
Namely, in the context of motivic integration, one can associate to $(D,W)$ an integral over the arc space of $S$ (and also more generally within an extension of our setting to  arbitrary dimension),  see e.g.~\cite{DL99,LCMMVVS:formulas,CNS18}.  %,Veys01}.
In case this integral converges, it is a formal power series in $T$ over a certain completion of $\KVarC[\LL^{-1/r}]$, and one proves an expression~\eqref{Eq:ZmotFormula} for it in terms of a resolution.  But for arbitrary $W$, that integral often does not converge.
\end{remark}

\begin{remark}\label{defpole}
Since $\KVarC$ has zero divisors~\cite{Po02,Bor18}, it is not clear a priori what would be the definition of a pole  of $\Zmott{(S,o)}(D,W; s)$. By using the localization $\mathcal{A}_T$ referred above, given $s_0 \in \QQ$, it is possible to define  when $\LL^{-s_0}$ is a pole of such a motivic zeta function, together with its pole order, mimicking as much as possible the obvious notions of pole and pole order for rational functions over a domain.
In particular, if $\LL^{-s_0}$ is a candidate pole of order $1$, there is a natural notion of residue, as we will see below, and $\LL^{-s_0}$ is a pole if and only if its residue is nonzero.
We will say in short that \emph{$s_0$ is a pole of $\Zmott{(S,o)}(D,W; s)$} if $\LL^{-s_0}$ is. A thorough treatment of those constructions is given in ~\cite[Section 4]{RodriguesVeys03}.
\end{remark}

In practice, many of the factors $\LL^{\nu_i+N_is}-1$ in the expression~\eqref{Eq:ZmotFormula} cancel in the final sum. 
In higher dimension it is a very difficult open problem to determine exactly the set of poles of   motivic and topological zeta functions. The general philosophy behind this quest is that the poles of these zeta functions are induced by certain \lq relevant\rq\ components $E_i$, appearing in every resolution.
When $W=0$, the poles of $\Zmott{(S,o)}(D,W; s)$ are completely determined by strict transforms, cycles of rational exceptional components, and exceptional components which are either non rational or of rupture type, see~\cite[Theorem 3.4]{RodriguesVeys03}. A generalization to arbitrary $W$ in the context of $\QQ$-resolutions will be presented in Theorem~\ref{Thm:RodVeysQres} below.

A prominent role in this type of results is played by the following numbers, which are central in the understanding of the poles for both motivic and topological zeta functions, see also~\cite{Loeser88,Veys95,Veys99,RodriguesVeys03}. 
Fixing an exceptional component $E_i$ with numerical data $(N_i,\nu_i)$ of the embedded resolution $\pi$, the value
$$
\alpha_j:=\nu_j-(\nu_i/N_i)N_j,
$$
associated to the intersection with another component $E_j$, is called the \emph{$\alpha$-value} of $E_j$ with respect to $E_i$. 
In fact, suppose that $E_i$ intersects exactly $k$ times other components, say $E_1,\ldots, E_k$, with numerical data $(N_j,\nu_j)$  for $j\in\{1,\ldots,k\}$. If 
$\nu_j/N_j\neq \nu_i/N_i$ for each $j$, then the contribution of ${E}_i$ to the residue of 
$s_0=-\nu_i/N_i$ for the topological zeta function~\eqref{Def:TopZeta} is given by
\begin{equation*}
	\R_i^e:=\frac{1}{N_i}\left(\chi({E}_i^\circ) + \sum_{j=1}^k\frac{1}{{\alpha}_{j}}\right).
\end{equation*}

Similarly, from~\cite[Section~5.4]{RodriguesVeys03}, the contribution of $E_i$ to the residue of  $s_0$ for the motivic zeta function~\eqref{Eq:ZmotFormula} is 
\begin{equation}\label{Eq:ResExc}
	\mathcal{R}_i^e:=\frac{1-\LL}{N_i\LL^{-\nu_i/N_i}}\left([E_i^\circ]+\sum_{j=1}^k \frac{\LL-1}{\LL^{\alpha_j}-1}\right),
\end{equation}
considered in a ring $\mathcal{A}$, that is the localization of $\KVarC[\LL^{-1/r}]$  with respect to the multiplicative system generated by the elements $1-\LL^{a/r}$ and $b$, for $a,b\in \ZZ\setminus \{0\}$.

Likewise, when $E_i$ is a component of the strict transform of $\pi$, intersecting the (unique) exceptional component $E_j$, its contribution to the residue of $s_0$ is
\begin{equation}\label{Eq:ResStri}
		\mathcal{R}_i^s:= \frac{1-\LL}{N_i\LL^{-\nu_i/N_i}}\cdot\frac{\LL-1}{\LL^{\alpha_j}-1} \in \mathcal{A}.
	\end{equation}

In any case, in order to determine which components of $\pi$ contribute to a pole, it is useful to study relations between the $\alpha$-values and also their range. 
A noteworthy property in this direction, for plane curves and when $W=0$, is the following.
\begin{prop}[{\cite[Proposition~II.3.1]{Loeser88}}]\label{lemma:alphas1}
	Assume that $\pi\colon X \to \CC^2$ is the minimal embedded resolution of an effective divisor $D$. 
	Let $E$ be an exceptional component of $\pi$, and take another component $E_j$, either of the exceptional divisor or of $\wh{D}$, intersecting $E$. Denote by $(N,\nu)$ and $(N_j,\nu_j)$ the numerical data of $E$ and $E_j$, respectively. Then $\alpha_j:=\nu_j-(\nu/N)N_j\in~[-1,1)$. 
\end{prop}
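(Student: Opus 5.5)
The plan is to combine a linear identity among the $\alpha$-values around a fixed exceptional curve with an induction along the sequence of point blow-ups that makes up $\pi$. Throughout $W=0$ on $\CC^2$, so $\nu_i=b_i$ is the log discrepancy, and $\nu_j=1$ for every component of $\wh{D}$.

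\textbf{Step 1 (the identity).} Let $E$ be an exceptional curve of $\pi$. It is rational, with $E^2=-k$, and it meets $r$ other components $E_1,\dots,E_r$ of $\pi^{-1}(D)$ (strict transforms included). From $\pi^*D\cdot E=0$ we get
\[
-kN+\textstyle\sum_j N_j=0 .
\]
Adjunction gives $K_X\cdot E=k-2$, with $K_X=\sum_i(\nu_i-1)E_i$. Since strict transforms have $\nu_j-1=0$, they may be included freely in the sum, and this yields
\[
-k\nu+\textstyle\sum_j\nu_j=r-2 .
\]
Subtracting $\nu/N$ times the first relation from the second gives
\[
\textstyle\sum_{j=1}^r\alpha_j=r-2 .
\]
In particular:
\begin{itemize}
\item if $r=1$, the unique $\alpha$-value is $-1$;
\item if $r=2$, the two $\alpha$-values sum to $0$.
\end{itemize}
Both cases are consistent with the claimed range $[-1,1)$.

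\textbf{Step 2 (induction on blow-ups).} I would build $\pi$ as a sequence of point blow-ups and control, at each stage, the data of the total transform of $D$ together with the coordinate axes. When a point $p\in E_a\cap E_b$ (or $p\in E_a$ only) is blown up with $m=\mult_p$ of the strict transform of $D$, the new curve has
\[
N_{\rm new}=N_a+N_b+m,\qquad \nu_{\rm new}=\nu_a+\nu_b,
\]
with the conventions $N_b=0$ and $\nu_b=1$ when $p$ lies on only one exceptional curve. The difficulty is that the $N$'s keep changing until the last blow-up, so the invariant must be phrased in terms of the final data. For that I would use curvettes. If $\gamma_i$ is a curvette of $E_i$, then $N_i=(D\cdot\pi(\gamma_i))_0$, and $\nu_i$ is the analogous quantity for the ideal $(x,y)$ accumulated along the infinitely near points. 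This realises $\nu_i/N_i$ as a ratio of intersection multiplicities, so $\alpha_j<1$ and $\alpha_j\ge -1$ become inequalities between intersection numbers of curvettes of adjacent curves with $D$.

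\textbf{Step 3 (using minimality).} Minimality enters in two places. First, every exceptional curve with $r\le 2$ neighbours has $E^2\le -2$, except when the final blow-up is needed to separate branches. Second, the dual graph decomposes into chains between rupture vertices and the arrows of $\wh{D}$. Along a chain ($r=2$) the identity of Step 1 gives $\alpha$ and $-\alpha$ on the two sides, so the $\alpha$-values propagate with alternating sign. Hence it suffices to bound the two extreme $\alpha$-values of each chain. These are computed from Puiseux/Eggers data:
\begin{itemize}
\item the end of a chain adjacent to a rupture vertex;
\item the end meeting an arrow, where $\alpha=1-N/N$ times the multiplicity, using the strict transform.
\end{itemize}

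\textbf{Main obstacle.} I expect the hardest part to be the strict upper bound $\alpha_j<1$ at a rupture vertex $E$, for neighbours lying towards the root of the dual graph. There one must show $\nu_j-1<(\nu/N)N_j$. I would first try to establish monotonicity of $\nu_i/N_i$ (or of $(\nu_i-1)/N_i$) along geodesics in the dual graph starting from the first blow-up, using the explicit formulas $N_{\rm new}=N_a+N_b+m$ and $\nu_{\rm new}=\nu_a+\nu_b$ together with $m>0$. I would deduce the bound from that monotonicity, and treat the last, separating blow-up as a separate case.
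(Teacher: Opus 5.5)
There is a genuine gap. Step~1 is correct, but Steps~2--3 are only a programme. The inequality that carries all the content, $\alpha_j<1$ for an \emph{exceptional} neighbour $E_j$ of $E$, is never proved. For a component of $\wh{D}$ it is trivial: $\alpha_j=1-(\nu/N)N_j<1$. The lower bound also needs no separate work. Once every neighbour has $\alpha_j<1$, your identity $\sum_j\alpha_j=r-2$ forces $\alpha_{j_0}=r-2-\sum_{j\neq j_0}\alpha_j>-1$ for $r\geq 2$, and $\alpha_{j_0}=-1$ for $r=1$.

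The tool you propose for the upper bound does not work, because $\nu_i/N_i$ is not monotone along geodesics from the first exceptional curve. For $y^2=x^3$ the minimal resolution has $E_1,E_2,E_3$ with data $(2,2),(3,3),(6,5)$ and dual graph $E_1-E_3-E_2$, so the ratio runs $1,\,5/6,\,1$. Monotonicity of $(\nu_i-1)/N_i$, even if true, only gives $(\nu_j-1)/N_j\le(\nu-1)/N<\nu/N$ for the neighbour towards the root, and says nothing about the other neighbours. Your chain remark is also inaccurate: $\alpha$-values relative to different base curves are not negatives of each other; what is constant along a chain is $\nu_kN_{k+1}-\nu_{k+1}N_k$. (The paper gives no proof of its own; it quotes Loeser.)

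The missing idea is to follow the $\alpha$-values relative to a fixed $E$ through the blow-ups performed after $E$ is created, using exactly the formulas you wrote. Put $c=\nu/N$ and $\alpha^{(E)}_F:=\nu_F-cN_F$; these data are fixed once $F$ exists. By minimality every centre is a non-normal-crossing point of the total transform, so it lies on the strict transform of $D$ and $m\geq 1$.
\begin{itemize}
\item Blowing up a point of $E$ on no other exceptional curve creates a neighbour with $\alpha^{(E)}=1-cm<1$.
\item Blowing up $E\cap E''$ replaces the neighbour $E''$ by one with $\alpha^{(E)}=\alpha^{(E)}_{E''}-cm<\alpha^{(E)}_{E''}$.
\end{itemize}
Hence it suffices to show $\alpha^{(E)}_{E_a}<1$ for the (at most two) exceptional curves $E_a$ through the centre of $E$.

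Since $N\,\alpha^{(E)}_{E_a}=-N_a\,\alpha^{(E_a)}_{E}$ and $N>N_a$, this follows from $\alpha^{(E_a)}_E\geq -1$. By the same monotonicity applied at $E_a$, $\alpha^{(E_a)}_E$ is at least the value of the final neighbour of $E_a$ that eventually replaces $E$. That value is $\geq -1$ by the identity as soon as the statement is known for the older curve $E_a$. Induction on the order of creation completes the proof, since the first exceptional curve has no such $E_a$.
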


With this result at hand, one may characterize the contributing components to the poles of, for instance, the motivic zeta function.

\begin{prop}\label{RVmot}
	Let $s_0\in \QQ$.
	\begin{enumerate}
		\item Then $s_0$ is a pole of order $2$ of $\Zmott{(S,o)}(D,W; s)$ if and only if there exist two intersecting components  $E_i$ and $E_j$ with $s_0 = -\nu_i/N_i = - \nu_j/N_j$. 
		
		\item Let $E_1, \dots, E_m$ be precisely the components with $s_0 = -\nu_i/N_i$, and assume that $E_i\cap E_{i'}= \emptyset$ for $i\neq i'$. Let $\mathcal{R}_1,\dots,\mathcal{R}_m$ be their residue contributions to $s_0$, respectively, where the $\mathcal{R}_i$ are the expressions in (\ref{Eq:ResExc}) or (\ref{Eq:ResStri}), depending on $E_i$ being an exceptional or strict transform component.
		Then $s_0$ is a pole of order $1$ of $\Zmott{(S,o)}(D,W; s)$ if and only if $\mathcal{R}_1 +\dots + \mathcal{R}_m \neq 0$ (in the ring $\mathcal{A}$).
	\end{enumerate}
\end{prop}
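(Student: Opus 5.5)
The plan is to work directly with the formula \eqref{Eq:ZmotFormula} in the localized ring $\mathcal{A}_T$ and the formal notions of pole and pole order from \cite[Section~4]{RodriguesVeys03} recalled in Remark~\ref{defpole}. I would first isolate the terms that can contribute to the candidate pole $\LL^{-s_0}$.

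\textbf{Isolating the relevant terms.} A factor $\LL^{\nu_k}-T^{N_k}$ vanishes at $T=\LL^{-s_0}$ exactly when $s_0=-\nu_k/N_k$. Every summand of \eqref{Eq:ZmotFormula} has at most two such factors in its denominator. Hence the pole order at $s_0$ is at most $2$. Moreover, a factor of order $2$ can only come from summands $[E_i\cap E_j]$ with $E_i\cap E_j\neq\emptyset$ and $-\nu_i/N_i=-\nu_j/N_j=s_0$. All other summands are either regular at $s_0$ or have a single vanishing factor.

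\textbf{Part (1).} If no intersecting pair has both ratios equal to $s_0$, the order is at most $1$, which gives one direction. For the converse, I would compute the coefficient of the order-two part. After normalizing each factor $\LL^{\nu_k}-T^{N_k}$ near $T=\LL^{-s_0}$ as in \cite[Section~4]{RodriguesVeys03}, this coefficient is, up to a common unit, a sum over the relevant pairs of
\[
\frac{(\LL-1)^2}{N_iN_j}\,[E_i\cap E_j].
\]
Here each $[E_i\cap E_j]$ is the class of a nonempty finite set of points, i.e.\ a positive integer. The sum is therefore a positive rational multiple of $(\LL-1)^2$. To check that it is nonzero in $\mathcal{A}$, I would specialize via the Euler characteristic (or Hodge--Deligne polynomial) after dividing by $(\LL-1)^2$, which is legitimate in the localization. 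This yields a nonzero number.

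\textbf{Part (2).} Assume the components $E_1,\dots,E_m$ with ratio $s_0$ are pairwise disjoint. Then each summand has at most one factor vanishing at $s_0$, and the residue is additive over summands. For an exceptional $E_i$, the relevant summands are:
\begin{itemize}
\item the term $[E_i^\circ]$;
\item the intersection terms with its neighbours $E_1',\dots,E_k'$, none of which has ratio $s_0$.
\end{itemize}
In each intersection term, I substitute $T=\LL^{-s_0}$ into the non-vanishing factor. Since $\nu_j+N_js_0=\alpha_j$, this gives $(\LL-1)/(\LL^{\alpha_j}-1)$, and summing over intersection points accounts for multiplicities. The vanishing factor $\LL^{\nu_i}-T^{N_i}$ produces the prefactor $(1-\LL)/(N_i\LL^{-\nu_i/N_i})$ through the chosen residue normalization. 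Together with the global $\LL^{-2}$ (a unit), this reproduces \eqref{Eq:ResExc}. For a strict transform component, only its intersection with the unique exceptional $E_j$ appears, and the same computation gives \eqref{Eq:ResStri}. The total residue is then $\mathcal{R}_1+\dots+\mathcal{R}_m$, up to a unit. By the definition of a simple pole in \cite[Section~4]{RodriguesVeys03}, $s_0$ is a pole of order $1$ iff this residue is nonzero in $\mathcal{A}$.

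\textbf{Main obstacle.} The delicate step is the residue normalization itself. Different components may have different $N_i$, so $\LL^{\nu_i}-T^{N_i}$ is a different polynomial in $T^{1/r}$ for each $i$. I must show that these factors all share the same simple factor at $T=\LL^{-s_0}$, with cofactor equal to $N_i$ times a common unit; this is where the $1/N_i$ originates. Because $\KVarC$ has zero divisors, this cannot be argued by evaluation in a field. I would follow \cite[Section~5.4]{RodriguesVeys03} and write
\[
\LL^{\nu_i}-T^{N_i}=\bigl(\LL^{-s_0/r'}-T^{1/r'}\bigr)\cdot u_i
\]
for a suitable common denominator $r'$. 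Then I would verify that $u_i$ evaluated at $T=\LL^{-s_0}$ equals $N_i$ times a common unit in $\mathcal{A}$.
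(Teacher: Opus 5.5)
Your proposal is correct and follows essentially the paper's route: the paper simply cites \cite[Propositions~5.3 and~5.4.1]{RodriguesVeys03} and notes that the same leading-coefficient and residue computation in $\mathcal{A}_T$ carries over to $W\neq 0$, which is exactly the computation you carry out. Two harmless slips: first, $u_i(\LL^{-s_0})$ equals $N_i\LL^{\nu_i}$ (not $N_i$) times a common unit, and the factor $\LL^{\nu_i}$ is cancelled by the numerator $T^{N_i}$ evaluated at $\LL^{-s_0}$; second, in part~(1) the nonvanishing should be certified via the Hodge--Deligne realization (or simply because a positive rational number is a unit in the nonzero ring $\mathcal{A}$) rather than via $\chi$, which kills $1-\LL$ and so does not extend to $\mathcal{A}$.
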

The original formulation of Proposition~\ref{RVmot} is given in~\cite[Propositions 5.3 and 5.4.1]{RodriguesVeys03}, when $W=0$. A straightforward computation is enough to extend the original proof to the case $W\neq 0$. 

Towards the understanding of the poles by means of embedded $\QQ$-resolutions, we present in Definition~\ref{Def:Alpha_Qres} an analogue of the $\alpha$-values for singular points on surfaces. Later on, in Section~\ref{sec:minQresInQuotients}, we will see some conditions that allow us to extend Proposition~\ref{lemma:alphas1} to singular quotient spaces, see~Proposition~\ref{Prop:AlphasAbs1}. 
	
	\subsubsection{Zeta functions in terms of $\QQ$-resolutions}\label{SubSec:ZQres}
	
	We elaborated in~\cite{LCMMVVS:formulas} a theory of zeta functions for $\QQ$-divisors on $\QQ$-Gorenstein
	varieties, using embedded $\QQ$-resolutions. 
	We deduce now some consequences for surface quotient singularities. Suppose that $(S,o)\cong(X(m;a,b),[0])$, where the action is
	small, and assume that $M=\supp(D)\cup\supp(W)$ has $\QQ$-normal crossing. 
	Moreover, assume that locally at $(X(m;a,b),[0])$ one can express $D$ and $W$ as
	\[
	D\colon x^{N_1}y^{N_2} = 0\quad\text{ and }\quad W\colon x^{\nu_1-1}y^{\nu_2-1} = 0,
	\]
where $W$ has no logarithmic poles outside $D$.
	Using this notation,~\cite[Theorem~3]{LCMMVVS:formulas} gives the following explicit formula for the motivic zeta function:
	\[
	\Zmott{(X(m;a,b),[0])}(D,W; s) = \LL^{-2} \mathcal{S}_{m,a,b}(\bm{N},\bm{\nu})\frac{(\LL-1)^2}{(\LL^{\nu_1+N_1 s}-1)(\LL^{\nu_2+N_2 s}-1)},
	\]
	with factor $$\mathcal{S}_{m,a,b}(\bm{N},\bm{\nu}) := \sum_{i=0}^{m-1} 	
	\LL^{ \frac{1}{m} \big(\nu_1[{ia}]_m + \nu_2 [{ib}]_m   + \left( N_1[{ia}]_m  + N_2[{ib}]_m \right) s\big)}.$$ Here $[{k}]_m$ denotes the class of $k$ modulo $m$ satisfying $0 \leq [{k}]_m \leq m-1$. Note that this factor does not induce any candidate pole, and also that the specialization map satisfies  $\chi(\mathcal{S}_{m,a,b}(\bm{N},\bm{\nu}))=m$.

	For $(S,o)$ and $(D,W)$ as in Setting~\ref{setting1}, take an embedded $\QQ$-resolution
	$\pi \colon X\to (S,o)$ of $M:=\supp(D)\cup\supp(W)$ with associated irreducible components $\set{E_i}_{i\in I}$. Consider the finite set of points
	\begin{equation}\label{Eq:P_pi}
		\P_\pi := (\Sing X) %\cap \pi^{-1}(o))
	\cup \bigcup _{\set{i,j}\subset I} (E_i\cap E_j),
	\end{equation}
	and set $E_i^\circ = E_i\setminus\P_\pi$. For each $P_k\in\P_\pi$, we have that $(X,P_k)$  is analytically isomorphic to some $(X(m_k;a_k,b_k),[0])$, where the action is small. Denote the local data for $D$ and $W$ at $P_k$  by  $\bm{N}_k=(N_{1,k},N_{2,k})$ and  $\bm{\nu}_k=(\nu_{1,k},\nu_{2,k})$.  
	Then~\cite[Theorem~4]{LCMMVVS:formulas} says that $\Zmott{(S,o)}(D,W;s)$ equals
	\begin{multline*}\label{Eq:ZmotFormulaQres}
		\LL^{-2} \sum_{i\in I_e} [E_i^\circ]\frac{\LL-1}{\LL^{\nu_i+N_i s}-1} + \LL^{-2} \sum_{P_k\in \P_\pi} \mathcal{S}_{m_k,a_k,b_k}(\bm{N}_k,\bm{\nu}_k)\frac{(\LL-1)^2}{(\LL^{\nu_{1,k}+N_{1,k} s}-1)(\LL^{\nu_{2,k}+N_{2,k} s}-1)}.
	\end{multline*}
	For the topological zeta function, one gets the simpler formula
		\begin{equation}\label{Eqn:ZtopQres}
			\Ztopp{(S,o)}(D,W; s)= \sum_{i\in I_e} \frac{\chi(E_i^\circ)}{\nu_{i}+N_{i}s}+ \sum_{P_k\in\P_\pi} \frac{m_k}{(\nu_{1,k}+N_{1,k}s)(\nu_{2,k}+N_{2,k}s)}.
		\end{equation}
	
	Furthermore, the hypothesis that the action on $P_k$ is small is not necessary to apply the former formula for the topological zeta function.
	
	\begin{lemma}\label{lemma:Ztop_nonsmall}
	  Let $\ol{D}\colon x^{N_1}y^{N_2} = 0$ and $\ol{W}\colon x^{\nu_1-1}y^{\nu_2-1} = 0$ be two $\QQ$-divisors on $\CC^2/G$, where %$N_i, \nu_i-1\in\QQ_{\geq0}$ 
$\ol{W}$ has no logarithmic poles outside $\ol{D}$, 
and $G\subset\GL_2(\CC)$ is finite abelian, acting diagonally. Then
	  \[
	    \Ztopp{(\CC^2/G,[0])}(\ol{D},\ol{W}; s) = \frac{|G|}{(\nu_1+N_1 s)(\nu_2 + N_2 s)}.
	  \]
	\end{lemma}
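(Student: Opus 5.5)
The plan is to reduce to the small case, where formula \eqref{Eqn:ZtopQres} with a single point applies, using the isomorphisms of Remark~\ref{rmk:AllAbelianAreCyclic}, and to track how $\ol{D}$, $\ol{W}$ and $|G|$ transform. First, $G$ is a finite abelian group acting diagonally, so $\CC^2/G \cong X(d;a,b)$ with $\gcd(d,a,b)=1$. This can be checked concretely. The subgroup $G_1 = G\cap\{(\lambda,1)\}$ is cyclic of order $e_1$ and acts only on $x$; likewise $G_2 = G\cap\{(1,\lambda)\}$ has order $e_2$ and acts only on $y$. Then $(x,y)\mapsto (x^{e_1},y^{e_2})$ identifies $\CC^2/(G_1\times G_2)\cong\CC^2$, and $G' := G/(G_1\times G_2)$ acts diagonally on this new $\CC^2$ with no pseudo-reflections. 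Hence $\CC^2/G \cong \CC^2/G'$ with $G'$ small, and $|G| = e_1 e_2 |G'|$. Moreover, $G'$ is isomorphic to a cyclic group $X(m;a',b')$ acting smally; indeed, a small diagonal abelian group is cyclic, since the projection to the first coordinate is injective.

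Second, I express the divisors in the new coordinates $u=x^{e_1}$, $v=y^{e_2}$. The curve $\{x=0\}$ in $\CC^2/G$ is the image of $\{u=0\}$, so $\ol{D}$ is still given by $u^{N_1}v^{N_2}$ in the coordinates of $\CC^2/G'$. Here I must be careful about what ``$x^{N_1}y^{N_2}=0$ on $\CC^2/G$'' means. I interpret it as the $\QQ$-Weil divisor $N_1\ol{L}_1+N_2\ol{L}_2$, where $\ol{L}_i$ are the images of the coordinate axes. The same interpretation applies to $\ol{W} = (\nu_1-1)\ol{L}_1+(\nu_2-1)\ol{L}_2$. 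These Weil divisors are intrinsic to the surface, so in the chart $\CC^2/G'$ they read $u^{N_1}v^{N_2}$ and $u^{\nu_1-1}v^{\nu_2-1}$. The numerical data are therefore unchanged, and \eqref{Eqn:ZtopQres}, applied with the trivial $\QQ$-resolution (identity) of the small quotient, gives $m/((\nu_1+N_1s)(\nu_2+N_2s))$ with $m=|G'|$.

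The main obstacle is the factor: this computation yields $|G'|$, not $|G|=e_1e_2|G'|$. So the intended reading must be that $\ol{D}$ and $\ol{W}$ are written in the monomials $x,y$ of $\CC^2$, that is, via pullback to $\CC^2$. Under that reading, $x^{N_1}=u^{N_1/e_1}$, so the data downstairs become $N_1/e_1$ and, after accounting for the ramification, $\nu_1/e_1$. I would verify this carefully by checking that with this convention the data transform as $(N_i,\nu_i)\mapsto(N_i/e_i,\nu_i/e_i)$. This is consistent with the ramification formula $K_{\CC^2}=\rho^*K+\sum(e_i-1)L_i$: the log discrepancy $\nu$ of a curve scales by $1/e$ under a cyclic cover branched with index $e$ along it. Then
\[
\frac{|G'|}{(\nu_1/e_1+N_1s/e_1)(\nu_2/e_2+N_2s/e_2)}=\frac{e_1e_2|G'|}{(\nu_1+N_1s)(\nu_2+N_2s)},
\]
which gives the claim. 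The key step to nail down is the bookkeeping of the convention in the statement: multiplicities are measured with respect to the local equations $x,y$ upstairs, and $\ol{W}$ is shifted by the branch contribution. Once that is fixed, the rest is the small-case formula \eqref{Eqn:ZtopQres}. Alternatively, to cross-check, I would compute directly via Remark~\ref{rmk:AllAbelianAreCyclic}'s explicit isomorphism and the specialization $\chi(\mathcal{S}_{m,a,b})=m$.
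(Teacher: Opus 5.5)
Your proposal is correct and follows the paper's proof. Both arguments reduce to a small cyclic quotient $X(m;a,b)$ via $[(x,y)]\mapsto[(x^{\ell_1},y^{\ell_2})]$ with $m=|G|/(\ell_1\ell_2)$, rescale the local numerical data to $(N_i/\ell_i,\nu_i/\ell_i)$, and apply \eqref{Eqn:ZtopQres} at the single point. The convention you settle on, with multiplicities read on $\rho^*\ol{D}$ and $\rho^*\ol{W}+\Ram_\rho$, is exactly the one the paper uses implicitly when it asserts those rescaled data, consistent with Theorem~\ref{thm:MultAlphas_proportional}(1).
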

	
	\begin{proof}
	  It is known that $(\CC^2/G,[0])\cong(X(m;a,b),[0])$,  where the action by $\mu_m$ is small, via group operations and an isomorphism $[(x,y)]\mapsto \left[\left(x^{\ell_1},y^{\ell_2}\right)\right]$, see Remark~\ref{rmk:AllAbelianAreCyclic}. It is clear that $m=\frac{|G|}{\ell_1\cdot\ell_2}$. Also, $(\frac{N_1}{\ell_i},\frac{\nu_1}{\ell_i})$ and $(\frac{N_2}{\ell_j},\frac{\nu_2}{\ell_j})$ are local numerical data for $\ol{D}$ and $\ol{W}$ in $(X(m;a,b),[0])$, for some choice of $\set{i,j}=\set{1,2}$. Applying~\eqref{Eqn:ZtopQres}, we obtain
	  \begin{align*}
	    \Ztopp{(\CC^2/G,[0])}(\ol{D},\ol{W}; s)
			&= \frac{m}{\left(\frac{\nu_1}{\ell_i} + \frac{N_1}{\ell_i}s\right)\left(\frac{\nu_2}{\ell_j} + \frac{N_2}{\ell_j}s\right)}
			= \frac{m\ell_1\ell_2}{(\nu_1+N_1 s)(\nu_2 + N_2 s)}\\
			&= \frac{|G|}{(\nu_1+N_1 s)(\nu_2 + N_2 s)}.
	  \end{align*}
	\end{proof}
	
	As a consequence, formula~\eqref{Eqn:ZtopQres} still holds for non-small actions. 
	Looking at that formula, it is easy to compute the contribution of $E_i$, $i\in I_e$, to the residue of $-\nu_i/N_i$. In order to provide a compact formula we introduce the following generalization of the $\alpha$-values.
	\begin{defi}\label{Def:Alpha_Qres}
	Consider a fixed irreducible component $E_i$ with numerical data $(N_i,\nu_i)$, $N_i\neq 0$,
	of the $\QQ$-resolution $\pi$ of the pair $(D,W)$. 
		For each point $P\in \P_\pi\cap E_i$ of order $m$, its $\alpha$-value with respect to $E_i$ is given by
		\begin{equation*}
	    \alpha_P = \begin{cases}
	                 \frac{\nu_j-(\nu_i/N_i)N_j}{m} & \text{if $P\in E_i\cap E_j$ for some $j\in I$},\\
	                 \frac{1}{m} & \text{otherwise} .
	               \end{cases}
	  \end{equation*}
	Strictly speaking, this definition depends on $D,W,\pi,E_i$ and $P$. Since we usually fix an exceptional divisor $E_i$ of a $\QQ$-resolution $\pi$ of $(D,W)$, to simplify notation we only write~$\alpha_P$.
	\end{defi}

The motivation for choosing $\frac{1}{m}$ in the second part of the definition comes from the fact that in this case $P\in\Sing X$, but it does not belong to any other $E_j$ with $j\in I$. One associates to this point the numerical data $(0,1)$, picturing an imaginary curvette passing through $P$ transversely, see Figure~\ref{fig:PpicapEj}.   
In the picture 
\[
\P_\pi \cap E_i =  \bigcup _{j=1}^{k} (E_i\cap E_j) \cup \{ P_1,\ldots,P_s \},
\]
and we denote by $m_j$ (resp.~$m_{P_j}$) the order of $E_i \cap E_j$ (resp.~$P_j$) as a cyclic quotient singularity.
	\begin{figure}[ht]
	\centering
	\definecolor{bluegreen2}{RGB}{0, 85, 127}
	\begin{tikzpicture}[scale=1, font=\footnotesize]
	\colorlet{colsing}{bluegreen2}
	\colorlet{colsingtext}{colsing}
	\draw[very thick] (-5,0) -- (7,0) node [right, xshift=3pt] {$E_i$};
	\draw (-4, -0.8) node[below] {$\begin{array}{c} E_j \\ j \in I_e \end{array}$} -- (-4,1)
	node[above]{$(N_j,\nu_j)$};
	\draw [-open triangle 45] (-2, -0.8) node[below]
	{$\begin{array}{c} E_j \\ \widehat{D} \setminus \widehat{W} \end{array}$} -- (-2,1)
	node[above]{$(N_j,1)$};
	\draw[-open triangle 45, dashed] (0, -0.8) node[below]
	{$\begin{array}{c} E_j \\ \widehat{W} \setminus \widehat{D}\end{array}$} -- (0,1)
	node[above]{$(0,\nu_j)$};
	\draw [-open triangle 45] (2.02, 1) node[above]{$(N_j,\nu_j)$};
	\draw (2, -0.8) node[below]
	{$\begin{array}{c} E_j \\ \widehat{D} \cap \widehat{W} \end{array}$} -- (2,0.8)
	;
	\draw[-open triangle 45, dashed] (4, -0.8) node[below]{}
	-- (4,1)node[above]{$(0,1)$};
	\draw[-open triangle 45, dashed] (6, -0.8) node[below]{}
	-- (6,1)node[above]{$(0,1)$};
	\draw[dashed] (2.04, -0.8) -- (2.04,0.89);
	\node[color=colsing] at (6,0) {\Large $\bullet$};
	\node[color=colsing] at (4,0) {\Large $\bullet$};
	\node[color=colsing] at (2,0) {\Large $\bullet$};
	\node[color=colsing] at (0,0) {\Large $\bullet$};
	\node[color=colsing] at (-2,0) {\Large $\bullet$};
	\node[color=colsing] at (-4,0) {\Large $\bullet$};
	\draw (6,0) node[above right] {$P_s$};
	\draw (4,0) node[above left] {$P_1$};
	\draw (5,0) node[below] {$\cdots$};
	\draw (-3,0) node[below] {$\cdots$};
	\draw (-1,0) node[below] {$\cdots$};
	\draw (1,0) node[below] {$\cdots$};
	\draw (-4,0) node[above right] {$(m_j)$};
	\end{tikzpicture}
	\caption{Description of the set $\P_\pi \cap E_i$.}
	\label{fig:PpicapEj}
	\end{figure}
	
	\noindent If no ambiguity arises, we denote for simplicity by $\alpha_j$ the $\alpha$-value with respect to $E_i$ at the point $E_i\cap E_j$.
	We have more concretely the following cases:
	\begin{equation}\label{eq:alpha-Ej}
		\alpha_j =
		\begin{cases}
			\frac{\nu_j - \frac{\nu_i}{N_i} N_j}{m_j} & \text{ for } i \in I_e \text{ or } E_j \text{ in } \widehat{D} \cap \widehat{W}, \\[7.5pt]
			\frac{1 - \frac{\nu_i}{N_i} N_j}{m_j} & \text{ for } E_j \text{ in } \widehat{D} \setminus \widehat{W}, \\[7.5pt]
			\frac{\nu_j}{m_j} & \text{ for } E_j \text{ in } \widehat{W} \setminus \widehat{D},
		\end{cases} \qquad\qquad
		\alpha_{P_j} = \frac{1}{m_{P_j}}.
	\end{equation}
	With this notation and following Definition~\ref{Def:Alpha_Qres}, the contribution of an exceptional component ${E}_i$ to the residue of $-\nu_i/N_i$ for the topological zeta function~\eqref{Eqn:ZtopQres}
	is given by
	\begin{equation}\label{Eq:Residue}
		\R_i^e:=\frac{1}{N_i}\left(\chi({E}_i^\circ) + \sum_{P\in \P_\pi\cap E_i}\frac{1}{{\alpha}_{P}}\right).
	\end{equation}
	
	Moreover, the notation recently discussed for the $\alpha$-values allows us to present the fo\-llo\-wing characterization of the poles of 
	$\Zmott{(S,o)}(D,W; s)$ in terms of $\QQ$-resolutions. 
	\begin{theorem}\label{Thm:RodVeysQres}
		Let $(S,o)$ and $(D,W)$ be as in Setting~\ref{setting1}. Fix an embedded $\QQ$-resolution $\pi \colon X\to (S,o)$ of $M=\supp(D)\cup \supp(W)$ and let $s_0\in \QQ$. We have that $s_0$ is a pole of $\Zmott{(S,o)}(D,W; s)$ if and only if
		\begin{enumerate}[label=(\roman*)]
			\item\label{ResPart1} $s_0=-\nu_i/N_i$ for some irreducible component $E_i$ in the set $\wh{D}\cap\wh{W}$ or in the set $\wh{D}\setminus\wh{W}$ (in which case $\nu_i=1$), or
			\item\label{ResPart2} $s_0=-\nu_i/N_i$ for some non-rational exceptional curve $E_i$, or
			\item\label{ResPart3} $s_0=-\nu_i/N_i$ for a cycle of rational exceptional curves $E_i$, or
			\item\label{ResPart4} $s_0=-\nu_i/N_i$ for some rational exceptional curve $E_i$ containing at least three points $P\in\P_\pi$ having $\alpha_{P}\neq 1$.
		\end{enumerate}
	\end{theorem}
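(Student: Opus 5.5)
The plan is to reduce Theorem~\ref{Thm:RodVeysQres} to the known smooth-case characterization, namely Proposition~\ref{RVmot} together with the residue analysis of \cite[Theorem 5.5]{RodriguesVeys03}, extended to arbitrary $W$. To do this, we pass from the $\QQ$-resolution $\pi$ to a usual embedded resolution $\pi' = \pi\circ\sigma$, where $\sigma$ is the minimal resolution of the cyclic quotient points of $X$. These are Hirzebruch--Jung strings of rational curves.

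\emph{Step 1: the smooth case.} On $\pi'$, an order-two pole arises exactly from intersecting components sharing $s_0$. An order-one pole is detected by the sum of the residue contributions \eqref{Eq:ResExc} and \eqref{Eq:ResStri}. We follow the argument of \cite[Section 5]{RodriguesVeys03}, which only uses the formal shape of the residues, so it carries over verbatim to arbitrary $W$. The steps are:
\begin{itemize}
\item[(a)] Strict transform components in $\wh D$ give nonzero residues, since the factor $(\LL-1)/(\LL^{\alpha}-1)$ is a unit in $\mathcal{A}$.
\item[(b)] A non-rational $E_i$ has $[E_i^\circ]$ involving a nontrivial class, for instance detected by the Hodge realization, that cannot cancel.
\item[(c)] A rational $E_i$ with at most two ``essential'' neighbours has residue zero precisely when the $\alpha$-relation holds.
\end{itemize}
The key identity for (c) is the adjunction-type relation $\sum_j(\alpha_j-1) = -2 + \ldots$, i.e.\ $\sum_{j}(\nu_j - \frac{\nu_i}{N_i}N_j) = -2+ \ldots$ coming from $E_i\cdot(K+\pi^*W-\frac{\nu_i}{N_i}\pi^*D)$. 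This relation still holds with $W$ included, because $\pi^*W$ and $\pi^*D$ are pulled back and hence numerically trivial on exceptional curves.

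\emph{Step 2: transfer to $\QQ$-resolutions.} Fix $E_i$ on $X$ and a point $P\in\P_\pi\cap E_i$ of order $m$. The curves of the Hirzebruch--Jung string of $\sigma$ over $P$ have numerical data interpolating linearly between those of $E_i$ and of $E_j$, or of the imaginary curvette with data $(0,1)$. The essential computation is that, along the string, the function $\nu_k-\frac{\nu_i}{N_i}N_k$ is ``linear'' with respect to the continued fraction. As a consequence:
\begin{itemize}
\item The $\alpha$-value of the first string curve adjacent to $E_i$ equals $m\,\alpha_P/m$, i.e.\ it is governed by $\alpha_P$.
\item The string curves contribute to $s_0=-\nu_i/N_i$ only if their ratio equals it. This happens iff $\alpha_P=0$, which produces a chain of curves with the same candidate pole (order two, consistent with (iii)).
\end{itemize}
Moreover, the telescoping sum of residues along a rational string equals the contribution $m/(\text{product})$ appearing in \eqref{Eqn:ZtopQres} and in $\mathcal{S}_{m,a,b}$. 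This is the motivic counterpart of Lemma~\ref{lemma:Ztop_nonsmall}. Hence the residue of $E_i$ computed via $\pi'$ agrees with the $\QQ$-resolution formula.

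\emph{Step 3: conclusion.} With Steps~1 and~2 in hand, a rational $E_i$ of $\pi'$ is a rupture curve (at least three neighbours with $\alpha\neq 1$ in the smooth sense) iff it has at least three points $P\in\P_\pi$ with $\alpha_P\neq1$. The reason is that a point $P$ with $\alpha_P=1$ is exactly one where the string is absent or contributes like a smooth transversal curvette. Likewise, cycles of rational curves in $\pi'$ with common ratio correspond to cycles in $\pi$, and strict transforms and genera are unchanged.

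\emph{Main obstacle.} The hardest part is Step~1(c) and its motivic version. We must show that for a rational $E_i$ with at most two points of $\alpha\neq 1$ the total residue vanishes in $\mathcal{A}$, whereas with at least three it is nonzero, possibly after summing over several disjoint $E_i$ with the same $s_0$. For this I would follow \cite[Theorem 5.5]{RodriguesVeys03}:
\begin{itemize}
\item Use the $\alpha$-relation to reduce the two-point case to $\frac{\LL-1}{\LL^{\alpha}-1}+\frac{\LL-1}{\LL^{-\alpha}-1}=-(\LL-1)$, which cancels $[\PP^1\setminus 2\text{ pts}]$.
\item For three or more points, pass through a specialization (Poincaré polynomial or $\chi$ after a suitable substitution) showing non-vanishing. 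Residues at the same $s_0$ carry the same sign under this specialization, so they cannot cancel across different components.
\end{itemize}
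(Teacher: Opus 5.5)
\paragraph{Review of the proof proposal.} Your reduction in Steps~2--3 is the paper's. You resolve the cyclic quotient points by Hirzebruch--Jung chains and use the linear interpolation of $\nu+sN$ along a chain (the paper cites \cite[Lemma~2.4]{Veys97} with $\Delta_{1,r}=m_j$). This shows that the chain curve adjacent to $\wt{E}_i$ has $\alpha$-value exactly $\alpha_P$, and conditions (i)--(iv) then transfer between $\pi$ and $\pi'$. The ``telescoping'' identification of residues with the $\mathcal{S}_{m,a,b}$-terms is unnecessary, since only the conditions, not the residues, need to be transferred.

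The gap is in Step~1, in the ``if'' direction for a candidate pole of order one. You must show that the \emph{total} residue $\sum_{i\in I(s_0)}\mathcal{R}_i$, summed over all components with $-\nu_i/N_i=s_0$, is nonzero. Your point (a) only shows that a single strict-transform contribution is nonzero. Your mechanism for excluding cancellation, ``residues at the same $s_0$ carry the same sign under this specialization'', is false for arbitrary $W$.
\begin{itemize}
\item The $\chi$-specialization cannot work at all. In Example~\ref{ex:noPoleUpstairs} the rupture curve $E$ has $\alpha$-values $\frac15,3,-\frac35,-\frac35$, so $\chi(E^\circ)+\sum_P 1/\alpha_P=-2+5+\frac13-\frac53-\frac53=0$. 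Indeed $-\frac85$ is a motivic pole but not a topological one.
\item The Hodge realization has no uniform sign either. With $W\neq0$ a curve can have several negative $\alpha$'s (two in that example). The leading terms $(1-k_i^-)/N_i$ of different curves with the same ratio can therefore have opposite signs.
\end{itemize}

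What is needed, and what the paper does, is an ordered extraction of invariants from
\[
\T=\frac{(uv)^{s_0}}{1-uv}\sum_{i\in I(s_0)}H(\mathcal{R}_i),
\]
where one splits $k_i=k_i^++k_i^-$ according to the sign of the $\alpha_j$.
\begin{enumerate}
\item First compute
\[
\lim_{u\to0}\T=\sum_{I_e(s_0)}\frac{1-k_i^--g_iv}{N_i}+\sum_{I_s(s_0)}\frac{k_i^+}{N_i}.
\]
Its $v$-coefficient $-\sum g_i/N_i$ is negative as soon as some contributing curve is non-rational. This settles (ii) without any sign control between curves.
\item If that limit vanishes, then
\[
\lim_{uv\to\infty}\frac{\T}{uv}=-\sum_{I_s(s_0)}\frac{k_i}{N_i},
\]
which is nonzero exactly when a strict transform of $D$ contributes. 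This settles (i).
\item Only the remaining situation is left: all contributing curves are rational exceptional, at least one is a rupture curve, and no strict transform contributes. Here one invokes the formal argument of \cite[Theorem~3.4]{RodriguesVeys03}. That argument is a genuine analysis of the rational functions $\frac{x-1}{x^{\alpha}-1}$ in $x=uv$, not a sign argument.
\end{enumerate}
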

	The main ideas of the proof of Theorem~\ref{Thm:RodVeysQres} are inspired by~\cite[Theorem 3.4]{RodriguesVeys03}, although the generalization is not  straightforward. To maintain the focus of the main goal of the paper, we postpone its proof  until Section~\ref{sec:RodVeysThm}. 
	
	In the plane curve case, it is customary to call an exceptional component, intersecting at least three times other components of the total transform, a \emph{rupture component}. These components are very important in singularity theory, in particular they characterize the exceptional components contributing to poles of local zeta functions when $W=0$~\cite{Veys95}. Condition \ref{RVPart4} in Theorem~\ref{Thm:RodVeysQres} serves to justify the extension of the name to a singular ambient surface and general $W$.
		\begin{defi}\label{def:rupture-divisor}
			Let $E$ be an irreducible component of an embedded $\QQ$-resolution $\pi \colon X\to (S,o)$ of $M=\supp(D)\cup \supp(W)$. If $E$ is a rational exceptional curve containing at least three points $P\in\P_\pi$ with $\alpha_{P}\neq 1$, then $E$ is called a \emph{rupture component}.
	\end{defi}
	
	\subsection{\texorpdfstring{$\QQ$}{QQ}-resolutions and relations between \texorpdfstring{$\alpha$}{α}-values}\label{subsec:NormalSurf}

	Let $(S,o)$ and $(D,W)$ be as in Setting~\ref{setting1}. 	
	Fix an embedded $\QQ$-resolution $\pi\colon X\to S$ of $M=\supp(D)\cup \supp(W)$, and let $\{E_i\}_{i\in I}$ be the irreducible components of $\pi^{-1} (M)$ with numerical data $\set{(N_i,\nu_i)}_{i\in I}$. Recall that the relative canonical divisor of $\pi$ is expressed, in terms of the log discrepancies $\set{b_{i}}_{i\in I_e}$, as $K_{\pi} = \sum_{i \in I_e} (b_i-1)E_i$. The next result provides arithmetic relations between the $\alpha$-values
	that become crucial when studying poles of local zeta functions; it generalizes the 
	classical case to $\QQ$-resolutions. We give the details of the proof for completeness; below we could treat all components $E_j$ simultaneously, resulting in a more compact proof, but we think it is useful to exhibit the differences between exceptional components and various sorts of strict transforms.

\begin{lemma}\label{Lemma:Arith-Rel}
		Denoting by $g_i$ the genus of the exceptional component $E_i$, we have that
		\[\sum_{P\in E_i} (\alpha_P-1) = 2g_i-2.
		\]
		Alternatively, by using the definition of $\P_{\pi}$
		in~\eqref{Eq:P_pi}, we may take  
		$P \in \P_{\pi} \cap E_i$ in the previous sum, since 
		$\alpha_P = 1$ outside 
		$\P_{\pi} \cap E_i$.
	\end{lemma}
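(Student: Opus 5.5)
The plan is to compute the degree of the $\QQ$-divisor $K_X + E_i$ restricted to $E_i$ in two ways. One computation uses the generalized adjunction formula on the normal surface $X$. The other uses the numerical triviality of pullback divisors on exceptional curves.

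\textbf{Key identity.} Write $K_X = \pi^*K_S + \sum_{j\in I_e}(b_j-1)E_j$. Since $E_i$ is exceptional, $\pi^*D\cdot E_i = 0$ and $\pi^*W\cdot E_i=0$. Also $\pi^*K_S\cdot E_i = 0$: here $K_S$ is understood as $\QQ$-Cartier or via Mumford pullback, which is what makes $K_\pi$ well defined. Now form the combination
\[
L := K_X + \pi^*W - \frac{\nu_i}{N_i}\pi^*D = \pi^*K_S + \wh W - \frac{\nu_i}{N_i}\wh D + \sum_{j\in I_e}\Bigl(\nu_j - 1 - \frac{\nu_i}{N_i}N_j\Bigr)E_j .
\]
The coefficient of $E_i$ in $L$ is $-1$. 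Hence $L = \pi^*(\cdots) + \sum_{j\neq i} (\alpha'_j - 1)E_j - E_i$ with $\alpha'_j := \nu_j-(\nu_i/N_i)N_j$. Here the strict transforms $E_j$ of $\wh D$, $\wh W$ are included with their numerical data as in \eqref{eq:alpha-Ej}; components of $\wh D\setminus\wh W$ get $\nu_j=1$, and those of $\wh W\setminus \wh D$ get $N_j=0$. Intersecting with $E_i$ gives
\[
0 = (K_X+E_i)\cdot E_i + \sum_{j\neq i}(\alpha'_j-1)\,E_j\cdot E_i ,
\]
where we add $E_i$ to both sides.

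\textbf{Adjunction on the $V$-manifold.} Next I will invoke the adjunction formula for Weil divisors on normal surfaces, with the different:
\[
(K_X+E_i)\cdot E_i = 2g_i - 2 + \deg \Diff_{E_i}(0).
\]
At a cyclic quotient point $P$ of order $m$ where $E_i$ is a coordinate axis of $X(m;a,b)$ (small action), the different has coefficient $1-\frac1m$. At such a point the local intersection number is $(E_j\cdot E_i)_P = \frac1m$ when $E_j$ is the other axis. Summing, the identity becomes
\[
2g_i-2 = -\sum_{P}\Bigl(1-\tfrac1{m_P}\Bigr) - \sum_{j\neq i}\sum_{P\in E_i\cap E_j}\frac{\alpha'_j-1}{m_P}.
\]

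\textbf{Regrouping pointwise.} Now collect the terms at each point. If $P\in E_i\cap E_j$, the contribution is $-(1-\frac1m) - \frac{\alpha'_j-1}{m} = \frac{\alpha'_j}{m}-1 = \alpha_P-1$. If $P$ is a singular point on no other $E_j$, it is $\frac1m - 1 = \alpha_P-1$. At every other point of $E_i$ the contribution is $0 = \alpha_P-1$ with $\alpha_P=1$. With the sign convention fixed appropriately (I expect $2g_i-2=\sum(\alpha_P-1)$ after checking signs), this yields the claim. It also shows why the sum may be restricted to $\P_\pi\cap E_i$.

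\textbf{Main obstacle.} The delicate step is justifying the local computations: that $\Diff$ has coefficient $1-1/m$ and that $(E_i\cdot E_j)_P = 1/m$ at points of $\QQ$-normal crossing. I would verify both by pulling back to the orbifold chart $\CC^2\to X(m;a,b)$, where intersection numbers divide by $m$ and $K$ pulls back without ramification because the action is small. I would also double-check the overall sign by testing on a smooth $\pi$ with $W=0$, where the statement reduces to the classical relation $\sum(\alpha_j-1)=2g_i-2+\ldots$ of Loeser.
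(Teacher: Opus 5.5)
Your route is the paper's own. You rewrite $K_X$ through the numerical data so that $E_i$ appears with coefficient $-1$ and intersect with $E_i$. You then compare with the generalized adjunction formula, using $\Diff(E_i)=\sum_P(1-\frac1{m_P})[P]$ and local intersection numbers $\frac1{m_P}$.

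However, the intersection step as you wrote it has a sign error. Since $L-K_X=\pi^*W-\frac{\nu_i}{N_i}\pi^*D$ is numerically trivial on $E_i$, you get $K_X\cdot E_i=L\cdot E_i=\sum_{j\neq i}(\alpha'_j-1)E_j\cdot E_i-E_i^2$, that is,
\[
(K_X+E_i)\cdot E_i=\sum_{j\neq i}(\alpha'_j-1)\,E_j\cdot E_i .
\]
This is not $0=(K_X+E_i)\cdot E_i+\sum_{j\neq i}(\alpha'_j-1)E_j\cdot E_i$. Already the blow-up of a line in $\CC^2$ gives $-2$ on both sides of the correct identity, whereas yours would read $0=-4$.

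Consequently your displayed formula for $2g_i-2$ has the wrong sign on the $\alpha'_j$-terms. Moreover, the regrouping $-(1-\frac1m)-\frac{\alpha'_j-1}{m}=\frac{\alpha'_j}{m}-1$ is arithmetically false, since the left side equals $\frac{2-\alpha'_j}{m}-1$. The two slips happen to cancel.

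With the corrected identity, the local contribution at $P\in E_i\cap E_j$ is $\frac{\alpha'_j-1}{m}-(1-\frac1m)=\frac{\alpha'_j}{m}-1=\alpha_P-1$. The singular points lying on no other component give $\frac1m-1=\alpha_P-1$. The argument then coincides with the paper's proof, so no idea is missing, but the sign has to be fixed rather than left to ``checking signs'' afterwards.
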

\begin{proof}
We fix an index $i\in I_e$. The identities~\eqref{eqn:canonical1} and~\eqref{eqn:canonical2} lead to
\begin{equation}\label{Eq:DefAlpha}
\begin{aligned}
	K_X &=\pi^\ast K_S - \pi^\ast W +\wh{W} -\sum_{j\in I_e}(\nu_j-1)E_j + \frac{\nu_i}{N_i}\left(\pi^\ast D - \wh{D}+\sum_{j\in I_e}N_j E_j\right) \\
&=\pi^\ast\left(K_S -  W + \frac{\nu_i}{N_i}D\right) +\sum_{j\in I_e}\left(\nu_j - \frac{\nu_i}{N_i}N_j -1 \right)E_j +\wh{W}-\frac{\nu_i}{N_i} \wh{D} .
\end{aligned}
\end{equation}
and then the intersection of $K_X+E_i$ with $E_i$ is
\begin{equation}\label{Eq:Intersection_E_i}
	(K_X+E_i)\cdot E_i=
	\sum_{\substack{j\in I_e \\ j\neq i }}\left( \nu_j - \frac{\nu_i}{N_i}N_j -1 \right)  E_j\cdot E_i
	+ \wh{W}\cdot E_i
	- \frac{\nu_i}{N_i}\wh{D}\cdot E_i.	
\end{equation}
With the aim of developing~\eqref{Eq:Intersection_E_i} conveniently, we first introduce some notation.
Assume $E_i$ intersects $k$ times other components (not necessarily exceptional) of $\pi^{-1}(M)$, say $E_1,\ldots,E_{k}$.  In fact, we could have $E_j=E_{j'}$, for $j\neq j'$, when $E_j$ intersects several times $E_i$,
but we keep this convention for convenience. Also $E_i$ may contain singular points of $X$, say $P_1,\ldots,P_s$,
that do not belong to any other divisor $E_j$. This way the set $\P_\pi \cap E_i$ from~\eqref{Eq:P_pi} becomes
\[
\P_\pi \cap E_i =  \bigcup _{j=1}^{k} (E_j\cap E_i) \cup \{ P_1,\ldots,P_s \},
\]
see Figure~\ref{fig:PpicapEj}. The corresponding $\alpha$-values are the ones from~\eqref{eq:alpha-Ej}.
	 
	In order to compute the left-hand side of equation~\eqref{Eq:Intersection_E_i}, namely the intersection $(K_X+E_i)\cdot E_i$,
	we use a generalized adjunction formula for Weil divisors on normal surfaces
	that requires a correction term, called the \emph{different}, 
	\[
	\left.(K_X + E_i)\right|_{E_i} = K_{E_i} + \Diff(E_i),
	\]
	see e.g.~\cite[Section~2.2]{Kollar13} or~\cite[Chapter~16]{Kol92}. Recall that the degree of the canonical divisor of $E_i$ is $2g_i-2$. Since $X$ is a $V$-manifold, the pair $(X,E_i)$ is log canonical with only finitely many
	cyclic quotient singularities, and the different at $E_i$ is given by
	\[
	\Diff(E_i) = \sum_{P \in E_i} \left(1-\frac{1}{m_P}\right)[P],
	\]
	see e.g.~\cite[Proposition~16.6]{Kol92}.
	Note that, since $m_P = 1$ except for finitely many points in $E_i$, the previous sum is finite.
%	The term coming from $\Diff(E_i)$
This sum can be decomposed into
	\[
	\sum_{P \in E_i} \left(1-\frac{1}{m_P}\right)[P]
	= \sum_{j = 1}^{k} \left( 1 - \frac{1}{m_j} \right)[E_j\cap E_i]
	+ \sum_{j = 1}^s \left( 1 - \frac{1}{m_{P_j}} \right)[P_j],
	\]
	where $\frac{1}{m_{P_j}}$ is precisely the $\alpha$-value at $P_j$, see~\eqref{eq:alpha-Ej}.
The degree of the left-hand side of~\eqref{Eq:Intersection_E_i} is thus
	\begin{equation}\label{eq:left-hand-side}
%	(K_X+E_i) \cdot E_i = 
2g_i - 2
	+ \sum_{j = 1}^{k} \left( 1 - \frac{1}{m_j} \right)
	+ \sum_{j = 1}^s \left( 1 - \alpha_{P_j} \right).
	\end{equation}
	
Using the $\alpha$-values from~\eqref{eq:alpha-Ej}, and the fact that the degree of $E_i \cdot E_j$ is $\frac{1}{m_j}$ (\cite{Ful98,AMO14B}), we can compute the degree of the right-hand side of~\eqref{Eq:Intersection_E_i} as
	\begin{equation}\label{eq:right-hand-side}
	\begin{aligned}
	& 	 \sum_{\substack{j = 1 \\ j \in I_e}}^{k} \frac{\nu_j - \frac{\nu_i}{N_i} N_j-1}{m_j}
+ \sum_{\substack{j = 1 \\ E_j \text{ in } \wh{W}}}^{k} \frac{\nu_j-1}{m_j} 
	- \sum_{\substack{j = 1 \\ E_j \text{ in } \wh{D}}}^{k} \frac{\nu_i}{N_i} \frac{N_j}{m_j} \\
\stackrel{(*)}{=}	& \sum_{\substack{j = 1 \\ j \in I_e}}^{k} \left( \alpha_{j} - \frac{1}{m_j} \right)
+ \sum_{\substack{j = 1 \\ E_j \text{ in } \wh{D}\cup\wh{W}}}^{k}
	\left( \frac{\nu_j-\frac{\nu_i}{N_i} N_j}{m_j} - \frac{1}{m_j} \right) \\
=& \sum_{\substack{j = 1 \\ j \in I_e}}^{k} \left( \alpha_{j} - \frac{1}{m_j} \right)
	+ \sum_{\substack{j = 1 \\ E_j \text{ in } \wh{D}\cup\wh{W}}}^{k}
	\left( \alpha_{j} - \frac{1}{m_j} \right).
	\end{aligned}
	\end{equation}
Here equality~$(*)$ deserves some attention. Observe that
	$\widehat{D} \cup \widehat{W} = (\widehat{D} \setminus \widehat{W}) \sqcup (\widehat{W} \setminus \widehat{D})
	\sqcup (\widehat{D} \cap \widehat{W})$, while $W = (\widehat{W} \setminus \widehat{D}) \sqcup (\widehat{D} \cap \widehat{W})$
	and
	$D = (\widehat{D} \setminus \widehat{W}) \sqcup (\widehat{D} \cap \widehat{W})$.
	Then~$(*)$ follows by computing the contributions of the corresponding terms to each one of the previous sets $\widehat{D} \setminus \widehat{W}$,
	$\widehat{W} \setminus \widehat{D}$, and $\widehat{D} \cap \widehat{W}$.
	Joining~\eqref{eq:left-hand-side} and~\eqref{eq:right-hand-side}, we thus derived the following identity from for~\eqref{Eq:Intersection_E_i}:
	\[
	2g_i - 2
	= \sum_{j=1}^{k} \left( \alpha_{j} - 1 \right)
	+ \sum_{j=1}^{s} \left( \alpha_{P_j} - 1 \right),
	\]
	which can be more compactly written as
	\begin{equation}\label{Eq:AlphaArithm}
	\sum_{P \in \P_{\pi}\cap E_i} (\alpha_P-1) = 2g_i-2.
    \end{equation}
    Since $\alpha_P = 1$ outside $\mathcal{P}_\pi \cap E_i$, we can rewrite equation~\eqref{Eq:AlphaArithm} as
    the more conceptual relation
    \begin{equation}\label{Eq:AlphaArithmP}
    	\sum_{P\in E_i} (\alpha_P-1) = 2g_i-2.
    \end{equation}
\end{proof}

	% =========================================================
	% Finite morphisms of surfaces and $\QQ$-resolutions
	% =========================================================
	\section{Finite morphisms of surfaces and \texorpdfstring{$\QQ$}{QQ}-resolutions}\label{sec:RamCovers}
	
We briefly recall some notions related to ramification.
Let $\varphi \colon V \to \ol{V}$ be a finite morphism of degree $d$ between normal varieties; it induces a well-defined pullback $\varphi^*$ of Weil divisors (see e.g.~\cite[Section~4.2]{Har77},~\cite[Section~2.3]{Kollar13}).
So in particular, for any prime divisor $\ol{D}$ on $\ol{V}$, we can express $\varphi^*{\ol{D}}$ as $\sum_{i=1}^m  r_i D_i$, where $\varphi^{-1}{\ol{D}}=\cup_{i=1}^m  D_i$ and all $r_i \in \ZZ_{\geq 1}$. The number $r_i$ is the {\em ramification index of $D_i$ along $\varphi$}. If $r_i > 1$, we say that {\em $\varphi$ is ramified at $D_i$} and that $\varphi(D_i)$ belongs to the {\em branch locus} of $\varphi$. If $r_i = 1$, we say that {\em $\varphi$ is unramified at $D_i$}.
The {\em ramification divisor of $\varphi$} is
$$
\Ram_\varphi :=\sum_i (r_i-1) D_i,
$$
where $i$ runs over the (finitely many) divisors $D_i$ on $V$ along which $\varphi$ is ramified. The {\em branch divisor $B_\varphi$} of $\varphi$ is the $\QQ$-divisor on $\ol{V}$ whose pullback is
$\Ram_\varphi$, that is, $B_\varphi= \sum_i (1- \frac1{r_i}) \varphi(D_i)$.  We have Hurwitz's formula:
\begin{equation}\label{eqn:Ram}
	K_V \sim_{\QQ} \varphi^*K_{\ol{V}} + \Ram_\varphi.
\end{equation}
When both $V$ and $\ol{V}$ are smooth curves, $\Ram_\varphi$ is supported on a finite set of points $\set{P_i}_i$. Taking degrees, and denoting by $g(\cdot)$ the genus, one recovers the well-known relation between ramified coverings of curves:
\[
  2g(V)-2 = d\cdot (2g(\ol{V})-2) + \sum_{i} (r_i-1).
\]
	
	\subsection{Numerical relations between compatible \texorpdfstring{$\QQ$}{QQ}-resolutions}\label{ssec:Numerical_Relations}
	
	In order to prove relations between poles of zeta functions for finite coverings of normal surfaces, we study common $\QQ$-resolutions where the finite map restricts to \emph{nice} coverings between exceptional divisors. This leads to numerical relations between multiplicities, log discrepancies and eventually residues, see e.g.~\cite[Section~2.42]{Kollar13} for related results. In fact, the relations below can essentially be derived from loc.~cit., that has been also the source of the ideas in the proof. We think, though, that it is useful to give here a more self-contained  presentation, adding the context of $\QQ$-resolutions. 
	Our main objects of study are finite maps $\rho\colon (S, o)\to (\ol{S}, \ol{o})$ between (germs) of normal surfaces $S$ and $\ol{S}$. In particular, we assume that $\rho^{-1}(\ol{o}) = \set{o}$.

	\begin{theorem}\label{thm:MultAlphas_proportional}
	  Let $\rho\colon (S, o)\to (\ol{S}, \ol{o})$ be a finite morphism between normal surfaces $S$ and $\ol{S}$, with associated ramification divisor $\Ram_\rho$ on $S$ and branch divisor $B_\rho$ on $\ol{S}$. Let $D$ and $W$ (resp.~$\ol{D}$ and $\ol{W}$) be Weil divisors on $(S, o)$ (resp.~$(\ol{S},\ol{o})$) such that $D = \rho^* \ol{D} $ and $ W =\rho^* \ol{W} + \Ram_\rho$.
	  Fix an embedded $\QQ$-resolution $\pi\colon X\to S$ of $M=\supp (D)\cup\supp(W)\cup\supp(\Ram_\rho)$ (resp.~$\ol{\pi}\colon \ol{X}\to \ol{S}$ of $\ol{M}=\supp (\ol{D})\cup\supp(\ol{W})\cup\supp(B_\rho)$),
	 assuming that there exists a finite morphism $\wt{\rho}\colon X\to \ol{X}$ making the following diagram commutative.
	  \begin{center}
		\begin{tikzcd}
			S \arrow[d, twoheadrightarrow, "\rho"]
			& \arrow[l, "\pi" above] X \arrow[d, twoheadrightarrow, "\wt{\rho}"]\\
			\overline{S} & \arrow[l, "\ol{\pi}" above] \ol{X}
		\end{tikzcd}
	  \end{center}
	  Then $\deg(\rho)=\deg(\wt{\rho})$ and the following holds. 
	  \begin{enumerate}
	    \item Let $E$ (resp.~$\ol{E}$) be an irreducible component of ${\pi}^{-1}({M})$ (resp.~$\ol{\pi}^{-1}(\ol{M})$), such that $\wt{\rho}(E)=\ol{E}$. Let $(N,\nu)$ and $(\ol{N},\ol{\nu})$ be the respective numerical data. Then $(N,\nu)=e\cdot(\ol{N},\ol{\nu})$, where $e$ is the ramification index of $E$ along $\wt{\rho}$.
	
	    \item Moreover, when $E$ is an exceptional divisor,  let $P$ be a point of $E$ with $\wt{\rho}(P)=Q\in\ol{E}$. If  
	    $\alpha_P$  is the $\alpha$-value with respect to $E$, and $\ol{\alpha}_Q$ is the $\alpha$-value of $Q$ with respect to $\ol{E}$, then $\alpha_P=n\ol{\alpha}_Q$, where $n$ is the ramification index of $P$ along $\left.\wt{\rho}\,\right|_{E}$.
	  \end{enumerate}
	\end{theorem}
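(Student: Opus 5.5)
Since $\pi$ and $\ol{\pi}$ are bimeromorphic, $\rho\circ\pi=\ol{\pi}\circ\wt{\rho}$ is generically finite. Over a dense open set where both resolutions are isomorphisms, $\wt{\rho}$ coincides with $\rho$. Hence $\deg(\wt{\rho})=\deg(\rho)$.

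For part (1), the plan is to compare pullbacks of divisors along the commutative square. The multiplicity $N$ comes from $\pi^*D=\pi^*\rho^*\ol{D}=\wt{\rho}^*\ol{\pi}^*\ol{D}$. Since $\wt{\rho}^*\ol{E}$ contains $E$ with coefficient $e$, we get $N=e\ol{N}$. For $\nu$, we use Hurwitz's formula \eqref{eqn:Ram} twice, once for $\rho$ and once for $\wt{\rho}$:
\[
K_X \sim_\QQ \wt{\rho}^*K_{\ol X}+\Ram_{\wt\rho},\qquad K_S\sim_\QQ \rho^*K_{\ol S}+\Ram_\rho .
\]
Next write $K_X+\pi^*(-W)-\wh{W}$ in terms of the quantities $\nu_j-1$, following \eqref{Eq:DefAlpha} with the $D$-part removed. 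The key identity to prove is the following, understood as an equality of actual divisors (not only of classes):
\[
K_X-\pi^*(K_S-W)\;=\;\wt{\rho}^*\bigl(K_{\ol X}-\ol{\pi}^*(K_{\ol S}-\ol{W})\bigr)+\Ram_{\wt\rho}.
\]
To justify it, substitute $W=\rho^*\ol{W}+\Ram_\rho$. Then note that $\pi^*\Ram_\rho$ and the part of $\Ram_{\wt\rho}$ supported on strict transforms coincide, since away from the exceptional locus $\wt{\rho}$ is $\rho$. Passing to actual divisors is legitimate because both sides are supported on $\pi^{-1}(M)$ and are numerically equivalent. Their difference is then an exceptional divisor that is numerically trivial, so it vanishes by negative definiteness of the intersection form.

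The coefficient of $E$ on the left is $\nu-1$. On the right it is $e(\ol{\nu}-1)+(e-1)=e\ol{\nu}-1$. This gives $\nu=e\ol{\nu}$. The same bookkeeping covers strict-transform components: for these $\nu_j-1$ is the multiplicity in $\wh W$, and the ramification term supplies the same correction.

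For part (2), first handle the case where $Q$ lies on a second component $\ol{E}_j=\wt{\rho}(E_j)$ with $P\in E\cap E_j$. Part (1) gives $\nu_j/N_j=\ol{\nu}_j/\ol{N}_j$ and $\nu/N=\ol{\nu}/\ol{N}$. Hence
\[
\nu_j-\tfrac{\nu}{N}N_j=e_j\bigl(\ol{\nu}_j-\tfrac{\ol\nu}{\ol N}\ol N_j\bigr),
\]
and it remains to show $e_j/m_P=n/m_Q$. To see this, work in local orbifold charts. Near $P$ and $Q$ the map $\wt{\rho}$ lifts to a map $\CC^2\to\CC^2$ between the smooth covers of $X(m_P;\cdot)$ and $X(m_Q;\cdot)$, sending the $\QQ$-normal crossing axes to axes. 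It can be taken of monomial form $(x,y)\mapsto(x^{a},y^{b})$ up to units, where $x=0$ is the lift of $E_j$ and $y=0$ that of $E$.

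Comparing degrees, the local degree of $\wt{\rho}$ at $P$ computed on the covers is $ab$, so $ab\,m_P^{-1}\cdot m_Q=\deg_P\wt\rho$. The ramification index along $E_j$ is $e_j=a\,m_Q/m_P'$, where $m_P'$ accounts for the isotropy of $E_j$. Restricting to $E$, whose lift is the $x$-axis modulo the group, gives $n$ in terms of $a$ and the orders. Tracking these carefully should yield $e_j m_Q=n\,m_P$.

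If $P$ lies on no other component, $\alpha_P=1/m_P$ and $\ol\alpha_Q=1/m_Q$, and the same local computation with $E_j$ replaced by an imaginary transverse curvette, of data $(0,1)$, should give $m_Q=n\,m_P$. A cleaner alternative route is the adjunction formalism used in Lemma~\ref{Lemma:Arith-Rel}: compare $(K_X+E)|_E=K_E+\Diff(E)$ with $\wt\rho^*$ of the corresponding identity for $\ol E$, via Riemann–Hurwitz for $E\to\ol E$. I would try this as a consistency check.

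The main obstacle is this local orbifold computation. One has to handle correctly the non-small isotropy of the cyclic groups, as in Remark~\ref{rmk:AllAbelianAreCyclic}, so that the ramification indices $e_j$ and $n$ and the orders $m_P,m_Q$ relate exactly as $e_j/m_P=n/m_Q$.
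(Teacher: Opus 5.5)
Your degree argument and part (1) are fine. For the $\nu$'s you are in fact more explicit than the paper's ``similar argument''. You could even avoid negative definiteness: write $K_\pi=\mathrm{div}(\pi^*\rho^*\ol\omega)-\pi^*\mathrm{div}(\rho^*\ol\omega)$ for a rational $2$-form $\ol\omega$ on $\ol S$; then Hurwitz for $\rho$ and $\wt\rho$ makes your identity hold on the nose.

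The gap is in part (2). Your reduction to $e_j/m_P=n/m_Q$ is correct, but that identity is the entire content of (2), and you do not prove it. The local computation ends with ``should yield'', and the formula you write along the way is wrong. With small orbifold charts the quotient maps are unramified in codimension one. So for the lift $(x,y)\mapsto(x^a u,\,y^b v)$ one gets $e_j=a$, with no isotropy correction. It is $n$ that carries the orders: $E$ near $P$ is parametrized by $x^{m_P}$ and $\ol E$ near $Q$ by $(x^a)^{m_Q}$, whence $n=a\,m_Q/m_P$ and $e_j m_Q=n\,m_P$. Done this way the argument closes. You would still have to justify that the lift exists and is monomial: the branch locus of $\wt\rho$ near $Q$ lies in $\ol E\cup\ol E_j$, since $\wt\rho$ agrees with $\rho$ off the exceptional loci and $B_\rho\subset\ol M$.

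The idea you are missing, which makes this a one-liner and is what the paper uses, is functoriality of pullback of $\QQ$-Cartier divisors. Take the square formed by $\gamma\colon E\hookrightarrow X$, $\beta\colon \ol E\hookrightarrow\ol X$ and $\varrho=\wt\rho|_E$; then $\gamma^*\wt\rho^*=\varrho^*\beta^*$. Combine this with the local intersection numbers $\gamma^*E_j=\frac1{m_P}P$ and $\beta^*\ol E_j=\frac1{m_Q}Q$, already used in Lemma~\ref{Lemma:Arith-Rel}. Comparing coefficients of $P$ in $\gamma^*\wt\rho^*\ol E_j=\varrho^*\beta^*\ol E_j$ gives $e_j/m_P=n/m_Q$ at once.

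The paper treats all points simultaneously with $\Delta=\sum_i(\ol\nu_i-\frac{\ol\nu}{\ol N}\ol N_i)\ol E_i$. The sum runs over the components and curvettes through the relevant points of $\ol E$, excluding $\ol E$ itself. By part (1), $\wt\rho^*\Delta$ equals $\sum_\ell(\nu_\ell-\frac{\nu}{N}N_\ell)E_\ell$ plus terms disjoint from $E$. So $\gamma^*\wt\rho^*\Delta=\sum_\ell\alpha_\ell P_\ell$, while $\varrho^*\beta^*\Delta=\sum n_\ell\ol\alpha_i P_\ell$.

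Your adjunction ``consistency check'' would also prove the identity. Restricting $\wt\rho^*(K_{\ol X}+\ol E)=K_X+E-(e_j-1)E_j$ (near $P$) to $E$ and using Riemann--Hurwitz for $\varrho$ gives $n(1-\frac1{m_Q})=(n-1)+(1-\frac1{m_P})-\frac{e_j-1}{m_P}$. But this rests on the same restriction-functoriality, so it should be the argument rather than a check.
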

	
	\begin{proof}
		Recall that the degree of a finite morphism is the degree of the corresponding field extension. Hence it is clear that $\deg(\wt{\rho}) = [K(X):\K(\ol{X})]=[K(S):\K(\ol{S})]=\deg(\rho)$. 
		
		We denote by $\{\ol{E}_i\}_{i\in I}$ the set of irreducible components of $\ol{\pi}^{-1}(\ol{M})$, and by 
		$\{{E}_j\}_{j\in J}$ the set of irreducible components of ${\pi}^{-1}({M})$. 
		
(1)		Since $\wt{\rho}$ is finite, we have for any $\ol{E}_i$ that
		\begin{equation}\label{Eqn:Ei_upstairs}
			\wt{\rho}^* \ol{E}_i = \sum_{j=1}^{m_i} e_j^{(i)} E_j^{(i)},\quad \text{with $e_j^{(i)}\geq 1$ \ for  $j=1,\ldots, m_i$}.
		\end{equation}
		By definition of the numerical data, we have that  
			\begin{equation*}\label{Eqn:Ni_upstairs}
				\pi^* D = \sum_{j\in J}N_j E_j\quad \text{and} \quad \ol{\pi}^*\ol{D} = \sum_{i\in I} \ol{N}_i \ol{E}_i.
			\end{equation*}
		By commutativity of the diagram, and using~\eqref{Eqn:Ei_upstairs}, it follows that
		\[
		\sum_{j\in J}N_j E_j=\pi^*D = \pi^*\rho^*(\ol{D})=
		\wt{\rho}^*(\ol{\pi}^*\ol{D}) =  \sum_{i\in I} \sum_{j=1}^{m_i} \ol{N_i} e_j^{(i)} E_j^{(i)}.
		\]
		Since the $E_j$ are irreducible, we conclude that $N_j=\ol{N}_i e_j^{(i)}$. 
		
		A similar argument, using now Hurwitz's formula and the fact that $W = \rho^*\ol{W} + \Ram_\rho$, yields the analogous relation between the $\nu$-numbers.

\smallskip
(2)
Given the finite map $\varrho:=\wt{\rho}\,|_{E} :E\to \ol{E}$, 
	  and the inclusions $\gamma\colon E \hookrightarrow X$ and $\beta\colon \ol{E}\hookrightarrow\ol{X}$, the following is also a commutative diagram.
	  \begin{center}
		\begin{tikzcd}
			X \arrow[d, twoheadrightarrow, "\wt{\rho}"]
			\arrow[r, hookleftarrow, "\gamma" above, swap] & E \arrow[d, twoheadrightarrow, "\varrho"]\\
			\overline{X} \arrow[r, hookleftarrow, "\beta" above] & \ol{E}
		\end{tikzcd}
	  \end{center}
	  We now gather the finite amount of points $\varrho( \P_{\pi}\cap E) \cup (\P_{\ol{\pi}}\cap \ol{E})$ in the set $\P_o$,  using the index set $I_o$. We associate a divisor $\ol{E}_i$ to any $Q_i$, $i\in I_o$, as follows:
	  \begin{itemize}
	    \item $\ol{E}_i$ is the unique irreducible component (of the exceptional divisor or the strict transform) such that $Q_i:=\ol{E}\cap \ol{E}_i$, if it exists, or
	    \item $\ol{E}_i$ is a curvette intersecting $\ol{E}$ at $Q_i$, otherwise.
	  \end{itemize}
Say $Q_i$ is of order $\ol{m}_i$ in $\ol{X}$, then the $\alpha$-value of $Q_i$ with respect to $\ol{E}_i$ is 
$\ol{\alpha}_i = \frac{1}{\ol{m}_i}\left(\ol{\nu}_i-\frac{\ol{\nu}}{\ol{N}}\ol{N}_i\right)$. Now, consider the divisor $\Delta$ on $\ol{X}$ given by 
\[\Delta := \sum_{i\in I_o}\left(\ol{\nu}_i-\frac{\ol{\nu}}{\ol{N}}\ol{N}_i\right) \ol{E}_i.
\]
In order to describe the pullback of $\Delta$, we take  $\{P_1^{(i)},\dots,P_{s_i}^{(i)}\}$ as the set  $\varrho^{-1}Q_i$, so we can write 
 \begin{equation*}\label{Eqn:Qi_upstairs}
	    \varrho^*Q_i = \sum_{\ell=1}^{s_i} n_\ell^{(i)}P_\ell^{(i)},  
	  \end{equation*}
where $n_\ell^{(i)}\geq 1$ is the ramification index of $P_\ell^{(i)}$ along $\varrho$.
For any $\ell=1,\ldots, s_i$, let $E_\ell^{(i)}$ be the curve (exceptional or strict transform component, or curvette) in the preimage of $\ol{E}_i$ that intersects $E$ at the point $P_\ell^{(i)}$. Furthermore, let $P_\ell^{(i)}$ have order $m_\ell^{(i)}$ in $X$, and denote its $\alpha$-value with respect to $E_\ell^{(i)}$ by $\alpha_\ell^{(i)}$.

By~\eqref{Eqn:Ei_upstairs}, we can write $\wt{\rho}^* \ol{E}_i = \sum_{\ell=1}^{s_i} e_\ell^{(i)}E_\ell^{(i)}  + F_i$, where $\supp(F_i)\cap E = \emptyset$. Thus we have 
	\begin{equation*}
		     \varrho^*\beta^*\Delta = \varrho^*\left(\sum_{i\in I_o} \frac{\ol{\nu}_i - (\ol{\nu}/\ol{N}) \ol{N}_i}{\ol{m}_i} Q_i\right) = \sum_{i\in I_o}\sum_{\ell=1}^{s_i} \ol{\alpha}_i n_\ell^{(i)}P_\ell^{(i)}.
	\end{equation*}
The commutativity of the diagram and part (1) imply that the latter  is also equal to
	  \begin{align*}
	    \gamma^*\wt{\rho}^*\Delta &= \gamma^*\left( \sum_{i\in I_o} \left(\ol{\nu}_i - (\ol{\nu}/\ol{N}) \ol{N}_i \right)\left(\sum_{\ell=1}^{s_i}  e_\ell^{(i)} E_\ell^{(i)}+F_i\right) \right) \\
&= \gamma^*\left( \sum_{i\in I_o} \sum_{\ell=1}^{s_i}   \left(\nu_\ell^{(i)} - (\nu/N) N_\ell^{(i)}\right) E_\ell^{(i)} \right)\\
			&= \sum_{i\in I_o} \sum_{\ell=1}^{s_i} \frac{\nu_\ell^{(i)} - (\nu/N) N_\ell^{(i)}}{m_\ell^{(i)}} P_\ell^{(i)} = \sum_{i\in I_o} \sum_{\ell=1}^{s_i} \alpha_\ell^{(i)} P_\ell^{(i)}.
	  \end{align*}
	  We conclude that
	  \begin{equation*}\label{Eqn:Alphas_relations_updown}
	    \ol{\alpha}_i n_\ell^{(i)} = \alpha_\ell^{(i)},
	  \end{equation*}
	  for any $\ol{E}_i$ intersecting  $\ol{E}$ and for all $\ell=1,\ldots, s_i$. 
	  Finally, for any point $Q\in \ol{E}\setminus\P_o$ and $P\in E$ projecting on $Q$, it is clear that $\ol{\alpha}_Q=\alpha_P=1$. 
	\end{proof}
	
	\begin{cor}
		Let $E$ be as in Theorem \ref{thm:MultAlphas_proportional}(2). Consider another irreducible component $E_j$ of $\pi^{-1}(M)$ and take $P\in\P_{\pi}\cap E_j$, a point of order $m$ in $X$. If $Q=\wt{\rho}(P)\in\ol{E_i}=\wt{\rho}(E_j)$ has order $\ol{m}$ in $\ol{X}$, then $m\alpha_P=e\ol{m}\,\ol{\alpha}_Q$, where $e$ is the ramification index of $E_j$ along $\wt{\rho}$.
	\end{cor}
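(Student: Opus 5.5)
The plan is to avoid part (2) of Theorem~\ref{thm:MultAlphas_proportional} and its local covering analysis altogether. Instead I will work directly from Definition~\ref{Def:Alpha_Qres} together with part (1) of the theorem, applied twice: once to the fixed exceptional component $E$ and once to $E_j$.

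Here $\alpha_P$ is the $\alpha$-value with respect to the fixed exceptional component $E$. The point $P$ lies on $E\cap E_j$, and since $\wt{\rho}(E)=\ol{E}$ and $\wt{\rho}(E_j)=\ol{E}_i$, its image $Q$ lies on $\ol{E}\cap\ol{E}_i$. Write $(N,\nu)$, $(N_j,\nu_j)$, $(\ol N,\ol\nu)$, $(\ol N_i,\ol\nu_i)$ for the numerical data of $E$, $E_j$, $\ol E$, $\ol E_i$. Because $P\in E\cap E_j$, we are in the first case of Definition~\ref{Def:Alpha_Qres}, so
\[
m\alpha_P=\nu_j-\frac{\nu}{N}N_j,\qquad \ol m\,\ol\alpha_Q=\ol\nu_i-\frac{\ol\nu}{\ol N}\ol N_i .
\]
If $E_j$ is a strict transform component lying in only one of $\wh D$ or $\wh W$, one of its data is the convention $\nu_j=1$ or $N_j=0$, as in~\eqref{eq:alpha-Ej}. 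This is compatible with the above, because part (1) of Theorem~\ref{thm:MultAlphas_proportional} is stated for all irreducible components of $\pi^{-1}(M)$, strict transforms included.

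Next I apply part (1) to $E$, which has some ramification index $e_E$ along $\wt\rho$. This gives $(N,\nu)=e_E(\ol N,\ol\nu)$, so $\nu/N=\ol\nu/\ol N$; the ratio is independent of $e_E$. Applying part (1) to $E_j$, whose ramification index is $e$, gives $(N_j,\nu_j)=e(\ol N_i,\ol\nu_i)$. Substituting both into the expression for $m\alpha_P$ yields
\[
m\alpha_P=e\ol\nu_i-\frac{\ol\nu}{\ol N}\,e\ol N_i=e\,\ol m\,\ol\alpha_Q,
\]
which is the claimed identity with $e$ exactly the ramification index of $E_j$.

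The only point needing care is bookkeeping. One must check that $Q$ really lies on $\ol E\cap\ol E_i$, so that $\ol\alpha_Q$ is computed using $\ol E_i$ rather than a curvette. This follows from commutativity and from the fact that $\wt{\rho}$ maps components onto components. One must also note that the ramification index of $E$ cancels, which is why only the index $e$ of $E_j$ survives in the statement.
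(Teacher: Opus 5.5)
Your argument is correct and is essentially the paper's route: the corollary gets no separate proof there, but it is exactly the identity $\nu_j-\frac{\nu}{N}N_j=e\bigl(\ol{\nu}_i-\frac{\ol{\nu}}{\ol{N}}\ol{N}_i\bigr)$, obtained from part (1) applied to $E$ and to $E_j$, which appears in the second step of the computation of $\gamma^*\wt{\rho}^*\Delta$ in the proof of part (2), combined with Definition~\ref{Def:Alpha_Qres}. You also correctly make explicit that $P$ must lie on $E\cap E_j$ for $\alpha_P$ (taken with respect to $E$) to be defined, and that the ramification index of $E$ cancels through $\nu/N=\ol{\nu}/\ol{N}$; these are the only points needed.
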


The next result shows a remarkable global property when $E$ is rational.
	
\begin{theorem}	
\label{thm:AtMostTwoAlphas}
Assume the hypothesis of Theorem~\ref{thm:MultAlphas_proportional}. Fix a rational exceptional component $E$ in $\pi^{-1}(M)$, and set $\ol{E}=\wt{\rho}(E)$ in $\ol{X}$. Assume that, for any component $E_j$ of $\pi^{-1}(M)$ that intersects $E$ in a point $P_j$, we have $\alpha_{P_j}:=\alpha_j\neq0$. If $\ol{E}$ is a rupture component, then $E$ is also a rupture component.
\end{theorem}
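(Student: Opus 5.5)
The plan is to argue by contradiction. Suppose $E$ has at most two points $P$ with $\alpha_P\neq 1$; call this set $R\subset E$. We then compare the two instances of Lemma~\ref{Lemma:Arith-Rel}, on $E$ and on $\ol{E}$, with the Riemann--Hurwitz formula for the finite map $\varrho:=\wt{\rho}\,|_E\colon E\to\ol{E}$.

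Since $E$ is rational, so is $\ol{E}$. Let $n=\deg\varrho$. By Theorem~\ref{thm:MultAlphas_proportional}(2), every $P\in E$ over $Q\in\ol{E}$ satisfies $\alpha_P=n_P\,\ol{\alpha}_Q$, where $n_P$ is the ramification index of $P$. The hypothesis $\alpha_{P_j}\neq 0$ gives $\ol{\alpha}_Q\neq 0$ at every special point $Q$. Lemma~\ref{Lemma:Arith-Rel} gives
\[
\sum_{P\in E}(\alpha_P-1)=-2,\qquad \sum_{Q\in \ol{E}}(\ol{\alpha}_Q-1)=-2,
\]
and Riemann--Hurwitz gives $\sum_{P\in E}(n_P-1)=2n-2$.

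The first step is to analyse a point $Q\notin\varrho(R)$ with $\ol{\alpha}_Q\neq 1$. Every preimage $P$ of $Q$ has $\alpha_P=1$, so $n_P=1/\ol{\alpha}_Q$ for all of them. Hence $\ol{\alpha}_Q=1/k_Q$ for an integer $k_Q\ge 2$, the fibre is uniformly ramified of index $k_Q$, and the fibre contributes exactly $n(1-1/k_Q)\ge n/2$ to the Hurwitz sum.

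Over a point $Q_t\in\varrho(R)$, the preimages not in $R$ all have the same index $k_t=1/\ol{\alpha}_{Q_t}$; this requires $k_t$ to be a positive integer whenever such preimages exist. Since $\ol{E}$ is a rupture component, there are at least three $Q$ with $\ol{\alpha}_Q\neq 1$. As $|\varrho(R)|\le 2$, at least one of them lies outside $\varrho(R)$, and exactly how many depends on $|R|$ and on $|\varrho(R)|$.

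Next, use the relation on $E$ to restrict $R$. If $|R|\le 1$, then $\sum(\alpha_P-1)=-2$ forces a single point with $\alpha_P=-1$. If $|R|=2$, it forces $\alpha_{P_1}+\alpha_{P_2}=0$, so one value is negative. Via $\alpha_P=n_P\ol{\alpha}_Q$, the corresponding $\ol{\alpha}_Q$ is negative as well. Now substitute everything into the downstairs relation $\sum_Q(\ol{\alpha}_Q-1)=-2$. For $Q\notin\varrho(R)$ the terms are $1/k_Q-1$. For $Q_t\in\varrho(R)$, rewrite $\ol{\alpha}_{Q_t}$ as $\alpha_{P_t}/n_{P_t}$ or as $1/k_t$.

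In parallel, the Hurwitz sum $2n-2$ splits into the contributions $n(1-1/k_Q)$ from the points $Q\notin\varrho(R)$, plus the contributions of the fibres over $\varrho(R)$. Those fibre contributions are $(n-n_{P_t})(1-1/k_t)+(n_{P_t}-1)$, or the analogous expression when both points of $R$ lie in one fibre. Combining this with the downstairs relation should yield an equality that cannot hold. The expected mechanism is that the two relations are then essentially the orbifold Euler characteristic computation for a cover of $\PP^1$ branched over at least three points with at most two special points upstairs. The negativity of one $\alpha_{P_t}$, together with $\alpha_{P_1}=-\alpha_{P_2}\neq 0$, should be incompatible with the positivity forced by the remaining Hurwitz terms.

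The main obstacle is this final case analysis. It splits according to $|R|\in\{0,1,2\}$, whether $\varrho(R)$ has one or two points, and whether the fibres over $\varrho(R)$ contain points outside $R$. In each case one must check that the signs rule out all solutions; the condition $\alpha\neq 0$ is what excludes the degenerate configuration $\alpha_{P_1}=\alpha_{P_2}=0$ with $\ol{E}$ still rupture. I would first settle the generic case, with $|\varrho(R)|=2$ and exactly one further special point $Q_3$, by a direct computation, and then reduce the remaining cases to it.
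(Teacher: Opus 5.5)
You have the right ingredients, and they match the paper's: the contrapositive, $\alpha_P=n_P\,\ol{\alpha}_Q$ from Theorem~\ref{thm:MultAlphas_proportional}(2), the relation on $E$, Hurwitz, uniform ramification over special points outside $\varrho(R)$, and the bound $d(1-1/k_Q)\ge d/2$. However, the decisive step is missing. You only say the relations ``should yield'' a contradiction and defer an unspecified case analysis.

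The mechanism you sketch also cannot work as stated, because the downstairs relation adds nothing. Since $\#\varrho^{-1}(Q)=d-\sum_{P\mapsto Q}(n_P-1)$, one has
\[
\sum_{P\in E}(\alpha_P-1)=d\sum_{Q\in\ol{E}}(\ol{\alpha}_Q-1)+\sum_{P\in E}(n_P-1).
\]
So the downstairs relation is a formal consequence of the relation on $E$ and Hurwitz. Nor do the constraints become inconsistent. What must be shown is that every admissible configuration has at most two $Q$ with $\ol{\alpha}_Q\neq 1$.

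The missing idea is total ramification at the image of the negative point.
\begin{itemize}
\item Let $P'\in R$ have $\alpha_{P'}<0$ and set $Q'=\varrho(P')$. Then $\ol{\alpha}_{Q'}<0$, so every preimage of $Q'$ has negative $\alpha$ and lies in $R$.
\item The other point $P$ of $R$, if any, has $\alpha_P>0$; this is exactly where $\alpha\neq0$ is used. Hence $\varrho^*Q'=dP'$ and $\varrho(P)\neq Q'$. In particular your case ``both points of $R$ in one fibre'' is vacuous.
\item This total ramification consumes $d-1$ of the Hurwitz budget $2d-2$.
\end{itemize}

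If $R=\{P'\}$, Hurwitz gives $d-1=\sum_i d(1-1/k_i)$. Each term is at least $d/2$, so there is at most one further special point.

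If $R=\{P',P\}$, write $\varrho^*Q=n_1P+n_2(P_2+\dots+P_s)$, with the common index $n_2=1/\ol{\alpha}_Q$ as you observed. Then $d=n_1+(s-1)n_2$, and Hurwitz reduces to $s-1=\sum_{i\ge 3}d(1-1/k_i)$. Since $s\le d$ and each term is at least $d/2$, there is at most one extra special point $Q_3$.
\begin{itemize}
\item If there is none, then $s=1$, and only $Q$ and $Q'$ are special.
\item If there is one, then $d\ge 1+\frac{d}{2}n_2$ forces $n_2=1$, so $\ol{\alpha}_Q=1$, and only $Q'$ and $Q_3$ are special.
\end{itemize}
In every case $\ol{E}$ is not a rupture component, which is the paper's argument. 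Without these steps your proposal is a plan rather than a proof.
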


\begin{proof}
We may assume that the degree $d$ of the finite map $\varrho:=\wt{\rho}\,|_{E} \colon E\to \ol{E}$ is different from~$1$, otherwise the result is clear. We will show the contrapositive  statement, i.e., 
$$\#\set{P_j\in E \mid \alpha_j\neq1}\leq 2  \quad\Rightarrow\quad \#\set{Q_i\in \ol{E} \mid \ol{\alpha}_i\neq1}\leq2.$$
 Assume that  the $\alpha$-value with respect to $E$ is different from $1$ for at most two points. Let $P'$ and $P$ be two points in $E$ with associated values $\alpha'\neq 1$ and $\alpha$. These points project onto $Q'$ and $Q$ in $\ol{E}$ with values $\ol{\alpha}'$ and $\ol{\alpha}$, respectively. By hypothesis and~\eqref{Eq:AlphaArithm}, $\alpha+\alpha'=0$, and we can assume that $\alpha>0$ and $\alpha'<0$, implying that $\ol{\alpha}>0$ and $\ol{\alpha}'<0$ by Theorem~\ref{thm:MultAlphas_proportional}(1). In particular, we have that 
$Q\neq Q'$. Observe that $Q'$ must be a total ramification point of $\varrho$, i.e.,~$\varrho^*Q'=dP'$, since any other point $P_j$ in $E$ projecting to $Q'$ should have corresponding value $\alpha_j<0$, but there are no points in $E$ with $\alpha_j\neq 1$ other that $P'$ and possibly $P$. We will consider two cases, according to whether $Q$ is also a total ramification point  or not, and show that, in each case, there is at most one other point in $\ol{E}$ with associated value different from~1, which will imply the result.
	
	  First, if $Q$ is a total ramification point, 
 then for any other point $Q_i\neq Q, Q'$ in $\ol{E}$ one has that $\varrho^*Q_i = \sum_{\ell=1}^{s_i} n_\ell^{(i)}P_\ell^{(i)}$ with $n_\ell^{(i)}\geq 1$, for any $\ell=1,\ldots, s_i$. By Hurwitz's formula, we have that
	  \[
	    -2 = -2d + 2(d-1) + \sum_{i}\sum_{\ell=1}^{s_i} \left(n_\ell^{(i)} - 1\right),
	  \]
	  implying that $n_\ell^{(i)}=1$, for any $Q_i$ and any $\ell=1,\ldots, s_i$. By Theorem~\ref{thm:MultAlphas_proportional}(2),
	  $\ol{\alpha}_i=\alpha_\ell^{(i)}$, and by hypothesis $\alpha_\ell^{(i)} =1$, so $\ol{\alpha}_i=1$. This shows that no more points  in $\ol{E}$, other than $Q$ and $Q'$, have $\ol{\alpha}\neq 1$.
	
	  Second, suppose that we have exactly $2\leq s\leq d$ points $P=P_1,P_2,\ldots, P_s$ in $E$ with associated values 
	  $\alpha_j=n_j\ol{\alpha}$, with $n_j\geq1$ for $j=1,\ldots,s$, and such that 
	  \[\varrho^*Q = \sum_{j=1}^s n_jP_j.
	  \]
	  For $j\geq2$ one has $1=\alpha_j=n_j\ol{\alpha}$, implying that $n_j=n_2$. Then, by ramification relations,  
	  \begin{equation}\label{Eq:nis_relations}
	    d=n_1+(s-1)n_2.
	  \end{equation}
	  Assume that there exist other points $Q_i\neq Q,Q'$ in $\ol{E}$ such that $\ol{\alpha}_i\neq 1$. Let $\varrho^*Q_i = \sum_{\ell=1}^{s_i} n_\ell^{(i)}P_\ell^{(i)}$ with $n_\ell^{(i)}\geq 1$, for  $\ell=1,\ldots, s_i$. We choose indices in the $Q_i$, starting at $i= 3$. Since $\ol{\alpha}_i=\frac{\alpha_\ell^{(i)}}{n_\ell^{(i)}} = \frac{1}{n_\ell^{(i)}}$, we have that $n_\ell^{(i)}\geq2$ and also $n_\ell^{(i)}=n_1^{(i)}$, for all $\ell=1,\ldots, s_i$. Denoting $n_i=n_\ell^{(i)}$, it is clear that $Q_i$ has exactly $s_i=d/n_i$ preimages with constant ramification index $n_i$. Thus, the total ramification over $Q_i$ is $\frac{d}{n_i}(n_i-1)$.  Observe that
	  \begin{equation}\label{Eq:Ram_estimation}
	    \frac{d}{n_i}(n_i-1) \geq \frac{d}{2},
	  \end{equation}
	  since $n_i\geq 2$ for any $Q_i$. Now we have by Hurwitz's formula that
	  \[
	    -2 = -2d + (d-1) + (n_1-1) + (s-1)(n_2-1) + \sum_{i\geq3} \left(d-\frac{d}{n_i}\right).
	  \]
By~\eqref{Eq:nis_relations} and~\eqref{Eq:Ram_estimation}, the latter reduces to
	  \[
		s-1 = \sum_{i\geq3} \left(d-\frac{d}{n_i}\right)\geq \sum_{i\geq3} \frac{d}{2}.
	  \]
	  But we had $s$ preimages of $Q$ with $2\leq s\leq d$, so the previous relation implies that there exists a unique $Q_i\neq Q,Q'$ (say $Q_i=Q_3$) in $\ol{E}$, such that $\ol{\alpha}_3\neq 1$ and satisfying $s-1=d-\frac{d}{n_3}$. Substituting this in~\eqref{Eq:nis_relations} yields $d=n_1 + (d-d/n_3)n_2\geq n_1 + \frac{d}{2}n_2$. This forces $n_2=1$, and then $\ol{\alpha}=\alpha_2/n_2=1$. Hence the only points in $\ol{E}$ with $\alpha$-value different from $1$ are $Q'$ and $Q_3$, which implies the result.
	\end{proof}
	
	It is worth noticing that the proof of Theorem~\ref{thm:AtMostTwoAlphas} depends only on geometric properties of coverings between rational curves. 
	When these are related by the action of a cyclic group, we describe next, in simple terms, when the condition of being a rupture component is not preserved by the covering.
	\begin{prop}\label{prop:losingPoles}
	Under the hypothesis of Theorem~\ref{thm:AtMostTwoAlphas}, assume that $\varrho:=\wt{\rho}_{|E} \colon E\to \ol{E}$ is induced by the action of a finite cyclic group $\mu_d$ on $E$, with $d>1$. Then we have the following.
	  \begin{enumerate}
		\item There are exactly two different ramification points in $\ol{E}$, and both are total ramification points.
		
		\item If $P$ and $Q=\varrho(P)$ have associated values $\alpha$ and $\ol{\alpha}$, respectively, then either $\ol{\alpha}=\frac{\alpha}{d}$ or $\ol{\alpha}=\alpha$, according to whether $Q$ is a total ramification point or not.
		
		\item If $E$ is a rupture component,  
		then either
		\smallskip
		\begin{enumerate}
			\item $\ol{E}$ is also rupture,  			or
			\item $\ol{E}$ is not rupture, and 
			$\alpha_1=d$ or $\alpha_2=d$, where these values are the ones associated with the unique preimages of the two total ramification points of $\ol{E}$, respectively.
		\end{enumerate}
	\end{enumerate}
	\end{prop}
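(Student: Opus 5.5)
We first determine the shape of the covering $\varrho\colon E\to\ol{E}$. Since $E$ is rational and $\varrho$ is the quotient by a cyclic group $\mu_d$ acting effectively on $E\cong\PP^1$ (the action factors through an effective one; we reduce to that case), the group acts as a finite cyclic subgroup of $\mathrm{PGL}_2(\CC)$. Such a group is conjugate to $t\mapsto\zeta t$, so its nontrivial elements have exactly two common fixed points, $0$ and $\infty$, and all other orbits are free. Hence $\ol{E}=E/\mu_d\cong\PP^1$, and the ramification occurs exactly over the images $Q_1,Q_2$ of $0,\infty$, each with a unique preimage of index $d$. This can be checked against Hurwitz: $-2=-2d+2(d-1)$. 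That proves (1).

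For (2), we apply Theorem~\ref{thm:MultAlphas_proportional}(2), which gives $\alpha_P=n\ol{\alpha}_Q$, with $n$ the ramification index of $P$. By (1), $n=d$ if $Q\in\{Q_1,Q_2\}$ and $n=1$ otherwise. So $\ol{\alpha}=\alpha/d$ or $\ol{\alpha}=\alpha$. Note also that the $\alpha$-values on $E$ are constant along $\mu_d$-orbits, since all points of the fibre over $Q$ share the value $n\ol{\alpha}_Q$.

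For (3), suppose $E$ is rupture and $\ol{E}$ is not. By (2), every point $Q\ne Q_1,Q_2$ satisfies $\ol{\alpha}_Q\ne1$ if and only if its (free) orbit consists of $d\ge2$ points with $\alpha\ne1$. Let $P_1,P_2$ be the unique preimages of $Q_1,Q_2$, and let $\alpha_1,\alpha_2$ be their values.

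First we show that no free orbit can have $\alpha\neq 1$. If some $Q\neq Q_1,Q_2$ had $\ol{\alpha}_Q\neq 1$, then Lemma~\ref{Lemma:Arith-Rel} on the rational curve $\ol{E}$, together with the non-rupture assumption, forces exactly the following situation: there are at most two points with $\ol{\alpha}\ne1$, and their values sum to $0$. Theorem~\ref{thm:AtMostTwoAlphas}'s argument, with its nonvanishing hypothesis, excludes the degenerate cases. Suppose first that both such points are outside $\{Q_1,Q_2\}$. Then their $\alpha$-values on $E$ are $\ol{\alpha}$ and $-\ol{\alpha}$, each repeated $d$ times, and Lemma~\ref{Lemma:Arith-Rel} on $E$ gives $-2=d(\ol{\alpha}-1)+d(-\ol{\alpha}-1)=-2d$, which contradicts $d>1$. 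Suppose instead that exactly one point, say $Q$, lies outside $\{Q_1,Q_2\}$, and the other is $Q_1$. Then on $\ol{E}$ we have $\ol{\alpha}_Q+\ol{\alpha}_{Q_1}=0$ with $\ol{\alpha}_{Q_2}=1$. Upstairs, $\alpha_2=d$, so the conclusion $\alpha_2=d$ already holds.

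Now take the main case, where all points off $\{Q_1,Q_2\}$ have $\ol{\alpha}=1$, hence $\alpha=1$ on the free orbits. Then the points of $E$ with $\alpha\ne1$ lie among $P_1,P_2$, and $E$ has at most two of them. This contradicts $E$ being rupture, so this case cannot occur.

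It follows that if $\ol{E}$ is not rupture, we are necessarily in the subcase where a free orbit carries $\alpha\neq1$ and one of $Q_1,Q_2$ has $\ol{\alpha}=1$, that is, $\alpha_i=d\ol{\alpha}_{Q_i}=d$. So (b) holds. The main obstacle is this bookkeeping of cases: we must make sure that the non-rupture condition on $\ol{E}$, combined with Lemma~\ref{Lemma:Arith-Rel} for $g=0$, leaves exactly these configurations, and that the hypothesis $\alpha_j\neq0$ rules out any further degenerate one.
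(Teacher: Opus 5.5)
Your parts (1) and (2) are fine. For (1) you use the normal form of a cyclic subgroup of $\mathrm{PGL}_2(\CC)$ rather than the paper's Hurwitz count. That is, if anything, the more robust route: Hurwitz together with constant ramification indices along fibres would by itself also allow, e.g., three branch points of index $2$ when $d=4$. Here you are implicitly reading $d$ as $\deg\varrho$, i.e.\ the action on $E$ as effective, which is the intended meaning. Part (2) is exactly the paper's appeal to Theorem~\ref{thm:MultAlphas_proportional}(2).

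Part (3), however, is argued incorrectly, even though its conclusion is true.
\begin{itemize}
\item In the subcase where both points of $\ol{E}$ with $\ol{\alpha}\neq 1$ lie off $\{Q_1,Q_2\}$, you derive $-2=-2d$ from Lemma~\ref{Lemma:Arith-Rel} on $E$. But in that subcase $\ol{\alpha}_{Q_1}=\ol{\alpha}_{Q_2}=1$, so by (2) $\alpha_1=\alpha_2=d\neq 1$. Then $P_1,P_2$ contribute $2(d-1)$ to the sum, giving $d(\ol{\alpha}-1)+d(-\ol{\alpha}-1)+2(d-1)=-2$, so there is no contradiction.
\item Your announced claim that no free orbit can carry $\alpha\neq1$ is false. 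In Example~\ref{ex:noPoleDownstairs} the free orbit $\{P_3,P_4\}$ has $\alpha=-\frac16$, with $E$ rupture and $\ol{E}$ not. Your final paragraph in fact asserts the opposite of that claim.
\item The configuration with a single point $\ol{\alpha}_Q=-1$ at a non-branch point is not excluded by anything you cite. The nonvanishing hypothesis is irrelevant there.
\end{itemize}
The statement survives only because in each of these configurations some $Q_i$ has $\ol{\alpha}_{Q_i}=1$, i.e.\ $\alpha_i=d$.

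The clean argument is what the paper means by ``a direct consequence of (1) and (2)''. It needs no case analysis and no Lemma~\ref{Lemma:Arith-Rel}. Suppose $\alpha_1\neq d$ and $\alpha_2\neq d$; then (2) gives $\ol{\alpha}_{Q_1}\neq1$ and $\ol{\alpha}_{Q_2}\neq1$. Since $E$ is rupture, some $P\notin\{P_1,P_2\}$ has $\alpha_P\neq1$, and its image $Q\notin\{Q_1,Q_2\}$ has $\ol{\alpha}_Q=\alpha_P\neq1$. So the rational exceptional curve $\ol{E}$ carries at least three points with $\ol{\alpha}\neq1$ and is rupture.
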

	
	\begin{proof}
	  The cyclic action forces that, for any $Q\in\ol{E}$, there exist some $r\geq1$ such that $\varrho^*Q=\sum_{i=1}^r nP_i$, with $n=d/r$. Following the proof of Theorem~\ref{thm:AtMostTwoAlphas}, together with Hurwitz's formula, the reader can verify (1), and then (2) follows by Theorem~\ref{thm:MultAlphas_proportional}(2). Assertion (3) is a direct consequence of (1) and (2). 
	\end{proof}
	
Theorem~\ref{thm:AtMostTwoAlphas} guarantees that the number of rupture components does not increase in the target whenever we  compare $\QQ$-resolutions. 
In the light of Theorem~\ref{Thm:RodVeysQres}, case~(3)--(b) above describes in geometrical terms the situation where a pole in the source `disappears' in the target, see Example~\ref{ex:noPoleDownstairs}.

\subsection{Normal crossing under finite cyclic group actions}\label{sec:BadCases}

Looking at the commutative diagram in Theorem~\ref{thm:MultAlphas_proportional}, one could naively conclude that any finite morphism
$\rho: S \to \ol{S}$ maps
$\QQ$-normal crossing $M = \supp(D) \cup \supp(W) \cup \supp (\Ram_\rho)$ in the source to $\QQ$-normal crossing
$\ol{M} = \supp(\ol{D}) \cup \supp(\ol{W}) \cup \supp(B_\rho)$ in the target, where $D = \rho^* \ol{D}$
and $W = \rho^* \ol{W} + \Ram_\rho$. To see that this is not always the case, it is enough to consider the simple case of quotient morphisms under finite cyclic group actions, see Example~\ref{ex:noQNC}. The purpose of this section is to discuss and classify pathological cases, where
$M$ is  normal crossing in $\CC^2$, while $\ol{M}$ is not in $\CC^2/\mu_d$.

\begin{setting}\label{setting2}
	Take $S=\CC^2$, $\ol{S}=\CC^2/\mu_d= X(d;a,b)$, with $\gcd(d,a,b)=1$,
	 and let $\rho\colon S \to \ol{S}$ be the natural covering map. The associated branch divisor $B_\rho$ on $\ol{S}$ is given by
	\[
	B_\rho = \left( 1-\frac{1}{e_1} \right) \ol{L}_1
	+ \left( 1-\frac{1}{e_2} \right) \ol{L}_2,
	\]
	where $\ol{L}_1$ and $\ol{L}_2$ are the divisors in $\ol{S}$ defined by $\{x=0\}$ and $\{y=0\}$, respectively, and $e_1 = \gcd(d,b)$ and $e_2 = \gcd(d,a)$. Note that the action of $\mu_d$ is small if and only if $B_\rho = 0$.
\end{setting}

Consider moreover two  $\QQ$-divisors $D$ and $W$  in $\CC^2$, as in Setting \ref{setting1}, that are (globally) invariant  under the action of $\mu_d$, and such that $M=\supp (D)\cup\supp(W)$ is normal crossing. For the purpose of our study, it suffices to analyze the behavior of the action on the irreducible components of $M$.

\subsubsection{Each irreducible component is $\mu_d$-invariant}\label{subsec:non-pathological}

First, assume that $C$ is a smooth $\mu_d$-invariant irreducible component of $M$. 
Let $\zeta$ be a $d$-th primitive root of unity. 
By irreducibility and invariance under the action, we classify the different possible situations.
\begin{enumerate}
  \item $C$ is one of the coordinate axes, and $a,b$ are arbitrary.
  
  \item $C$ is not an axis, but it is tangent to an axis, say to $L_1\colon x=0$. In this case, we can assume that the germ $C$ has associated holomorphic function
  \[
    f(x,y) := x + \lambda y^\ell + h(x,y),
  \]
  where $\ell\geq 2$,  $\lambda \in \CC\setminus \{0\}$, and $h(x,y)$ is a convergent power series of order at least $2$ containing other monomials (or identically zero). Since $C$ is $\mu_d$-invariant, $f$ should be equal to $(\zeta\cdot f)$ up to a constant, namely equal to
  \[
  x + \lambda\zeta^{b\ell-a}y^\ell + \zeta^{-a} (\zeta\cdot h)(x,y).
  \]
  In particular, $b\ell\equiv a \mod d$, and then $X(d;a,b)=X(d; b\ell, b) = X(d;\ell,1)$, since the relation $\gcd(d,a,b)=1$ imposes that $e_1=\gcd(d,b)=1$ in this situation. Finally, $B_\rho=(1-\frac{1}{e_2})\ol{L}_2$, where $e_2=\gcd(d,\ell)$.
(Note that $h(x,y)$ can contain other powers $y^{\ell'}$ of $y$, for which we conclude similarly that $b\ell'\equiv a \mod d$. Then $\ell'\equiv \ell \mod d$ and $\gcd(d,\ell')=\gcd(d,\ell)$.)
  
  \item $C$ is tangent to a line different from the axes. 
  Then $C$ has equation
  $$
	f(x,y) := \alpha x + \beta y + h(x,y), 
  $$
  where  $\alpha,\beta \in \CC\setminus \{0\}$, and $h(x,y)$ is a convergent power series of order at least $2$ (or identically zero). Applying the $\mu_d$-action, we obtain
  $$
    (\zeta\cdot f)(x,y) = \alpha\zeta^{a} x + \beta \zeta^{b} y + (\zeta\cdot h)( x, y).
  $$
  This forces $a\equiv b\mod d$, and then also $e_1=e_2=1$. Hence $X(d;a,b)=X(d;1,1)$ and $B_\rho=0$.
\end{enumerate}
It is worth noticing that any of the previous cases can be reduced to the axis case, since  $x' = f(x,y), y'=y$ provides a change of coordinates that is invariant under the action, and such that both $C$ and $\ol{C}$ are defined by $x'$, while $B_\rho$ remains invariant.

\begin{table}[ht]
	\begin{tabular}{|>{\centering\arraybackslash}m{1.9cm}|>{\centering\arraybackslash}m{5.2cm}|>{\centering\arraybackslash}m{1.9cm}|>{\centering\arraybackslash}m{5.2cm}|}
		\hline
		\multicolumn{2}{|c|}{$M=C$ is irreducible} & \multicolumn{2}{c|}{$M = C_1 \cup C_2$ and $C_i$ is $\mu_d$-invariant}
		\\ \hline
		\begin{tikzpicture}
			\colorlet{colsing}{bluegreen2}
			\colorlet{colsingtext}{colsing}
			\node[color=colsing] at (0,0) {\large $\bullet$};
			\draw[-open triangle 45, thick] (0,-1) -- (0,1);
			\draw[ thick, dashed] (.02,-1) -- (.02,1); 
			\draw[-open triangle 45, thick, dashed] (-1,0) -- (1,0);
		\end{tikzpicture} 
		&
		\shortstack{$X(d;a,b)$\\[.5em] $B_\rho = (1-\frac{1}{e_1})\overline{L}_1 + (1-\frac{1}{e_2})\overline{L}_2$} 
		&
		\begin{tikzpicture}
			\colorlet{colsing}{bluegreen2}
			\colorlet{colsingtext}{colsing}
			\node[color=colsing] at (0,0) {\large $\bullet$};
			\draw[-open triangle 45, thick] (0,-1) -- (0,1);
			\draw[thick, dashed] (.02,-1) -- (.02,1); 
			\draw[-open triangle 45, thick] (-1,0) -- (1,0);
			\draw[thick, dashed] (-1,.02) -- (1,.02);
		\end{tikzpicture}
		&
		\shortstack{$X(d;a,b)$\\[.5em] $B_\rho = (1-\frac{1}{e_1})\overline{L}_1 + (1-\frac{1}{e_2})\overline{L}_2$}
		\\ \hline
		\multirow{2}{*}{\shortstack{
				\begin{tikzpicture}
					\draw[->, gray] (0,-1) -- (0,1); 
					\colorlet{colsing}{bluegreen2}
					\node[color=colsing] at (0,0) {\large $\bullet$};
					\draw[-open triangle 45, thick] (-.8,-.8) to[out=0, in=270] (0,0) to[out=90, in=0] (-1,.8);
					\draw[-open triangle 45, thick, dashed] (-1,0) -- (1,0);
				\end{tikzpicture}
			}
		} & 
		\multirow{2}{*}{\shortstack{
				\\
				$X(d;\ell,1)$\\[.5em]
				$B_\rho = (1-\frac{1}{e_2})\overline{L}_2$%
			}
		}
		&
		\scalebox{0.5}{
			\begin{tikzpicture}
				\draw[->, gray] (0,-1) -- (0,1); 
				\colorlet{colsing}{bluegreen2}
				\node[color=colsing] at (0,0) {\large $\bullet$};
				\draw[-open triangle 45, thick] (-.8,-.8) to[out=0, in=270] (0,0) to[out=90, in=0] (-1,.8);
				\draw[-open triangle 45, thick] (-1,0) -- (1,0);
				\draw[ thick, dashed] (-1,0.02) -- (1,0.02); 
			\end{tikzpicture}
		}
		&  
		\shortstack{\small $X(d;\ell,1)$\\
			$B_\rho = (1-\frac{1}{e_2})\overline{L}_2$}
		\\ \cline{3-4} 
		& &  
		\scalebox{0.5}{
			\begin{tikzpicture}
				\draw[->, gray] (-1,0) -- (1,0); \draw[->, gray] (0,-1) -- (0,1);
				\colorlet{colsing}{bluegreen2}
				\node[color=colsing] at (0,0) {\large $\bullet$};
				\draw[-open triangle 45, thick] (-.6,-1) to[out=30, in=270] (0,0) to[out=90, in=-30] (-.6,.8);
				\draw[-open triangle 45, thick] (-1,-.4) to[out=30, in=180] (0,0) to[out=0, in=150] (1,-.4);
			\end{tikzpicture}
		}
		&  
		\shortstack{\small $X(d;\ell_1,1)=X(d;1,\ell_2)$\\
			$B_\rho = 0$} 
		\\ \hline 
		\begin{tikzpicture}
			\colorlet{colsing}{bluegreen2}
			\colorlet{colsingtext}{colsing}
			\draw[->, gray] (-1,0) -- (1,0); \draw[->, gray] (0,-1) -- (0,1);
			\node[color=colsing] at (0,0) {\large $\bullet$};
			\draw[-open triangle 45, thick] (-1,-.2) to[out=-15, in=225] (0,0) to[out=45, in=280] (0.2,1);
		\end{tikzpicture}                
		&
		\shortstack{$X(d;1,1)$\\[.5em] $B_\rho = 0$ \\[.5em] $C$ can be a line germ}
		&
		\begin{tikzpicture}
			\draw[->, gray] (-1,0) -- (1,0); \draw[->, gray] (0,-1) -- (0,1);
			\colorlet{colsing}{bluegreen2}
			\node[color=colsing] at (0,0) {\large $\bullet$};
			\draw[-open triangle 45, thick] (-1,-.2) to[out=-15, in=225] (0,0) to[out=45, in=280] (0.2,1);
			\draw[-open triangle 45, thick] (-.4,.8) to[out=270, in=135] (0,0) to[out=-45, in=180] (.8,-.2);
		\end{tikzpicture}
		&  
		\shortstack{$X(d;1,1)$\\[.5em] $B_\rho = 0$\\[.5em] Any $C_i$ can be a line germ}
		\\
		\hline
	\end{tabular}
	\medskip
	\caption{All possible situations where normal crossing in~$\CC^2$ induces $\QQ$-normal crossing in $\CC^2/\mu_d$, classified by their geometry with respect to the axes.}\label{table:NcQnc}
\end{table}

Secondly, consider $M=C_1\cup C_2$ normal crossing, where $C_1$ and $C_2$ are the two smooth irreducible $\mu_d$-invariant components of $M$. 
Since each $C_i$ ($i=1,2$) is $\mu_d$-invariant, and we are in a normal crossing situation, it follows from the previous discussion that we can have the following possible cases.
\begin{enumerate}
  \item $C_1$ and $C_2$ are the coordinate axes, %with multiplicities $N_1$ and $N_2$, 
and $a,b$ are arbitrary.
  
  \item $C_1$ is not an axis, but tangent to one, say to $L_1$. If $C_2$ is $L_2$, then $e_1=1$,  $X(d;a,b)=X(d;\ell,1)$ for some $\ell\geq 2$, and $B_\rho=(1-\frac{1}{e_2})\ol{L}_2$, where $e_2=\gcd(d,\ell)$. If $C_2$ is not $L_2$, but it is tangent to $L_2$, then $e_1=e_2=1$ and $B_\rho=0$.
 
 \item At least one $C_i$ is tangent to a line different from the axes. 
 This forces $X(d;a,b)=X(d;1,1)$ and $B_\rho=0$.
\end{enumerate}
Once more, $x' = f_1(x,y)$ and $y'=f_2(x,y)$, where $f_i$ is the function associated to $C_i$ ($i=1,2$), provides a $\mu_d$-equivariant change of coordinates, such that $C_1$ and $C_2$ (and also $\ol{C_1}$ and $\ol{C_2}$) become the coordinate axes.

Finally, take $D=N_1C_1+N_2C_2$ and $W = (\nu_1-1)C_1 + (\nu_2-1)C_2$ (where thus each $N_i \geq 0$, $(N_1,N_2)\neq (0,0)$, and, if $N_i=0$, then $\nu_i \neq 0$), and assume that $M=\supp(D)\cup\supp(W)$ is normal crossing in $\CC^2$. 
So $M$ has one or two components.
In any case, under the assumption that each irreducible component is $\mu_d$-invariant, it follows that $\ol{M}=\supp(\ol{D})\cup\supp(\ol{W})\cup B_\rho$ is $\QQ$-normal crossing, since in particular neither $\ol{D}$ nor $\ol{W}$ can be tangent to $B_\rho$ (see Table~\ref{table:NcQnc}). The topological zeta function in this setting is easily computable, since formula~\eqref{Eqn:ZtopQres} is still valid for non-small actions (see Lemma~\ref{lemma:Ztop_nonsmall}):
\[
  \Ztopp{(\ol{S},0)}(\ol{D},\ol{W};s) = \frac{d}{(N_1s+\nu_1)(N_2s+\nu_2)} = d \cdot \Ztopp{(S,0)}(D,W;s).
\]
In conclusion, both Theorem~\ref{Thm:Comparison_Ztop} and~\ref{Thm:dZtop} hold in this particular case.

\subsubsection{The two irreducible components are in the same $\mu_d$-orbit}\label{subsec:orbit-pathology}

Suppose that $M$ has irreducible (smooth) components $C_1$ and $C_2$, which are permuted by the $\mu_d$-action. Each $C_i$ is defined by
\[
f_i(x,y) := \alpha_i x + \beta_i y + h_i(x,y),
\]
where  $(\alpha_i,\beta_i) \neq (0,0)$ and $h_i(x,y)$ is a convergent power series of order at least $2$ (or identically zero), for $i=1,2$. Then $\ol{M}$ is not  $\QQ$-normal crossing, since $f_1 f_2$ has no linear term, independently of whether $B_\rho=0$ or not. 

Writing $D=N C_1+N C_2$ and $W = (\nu- 1)C_1 + (\nu- 1) C_2$, with $N>0$ and $\zeta\cdot C_1=C_2$ and vice versa, the fact that $\supp(D)\cup\supp(W)$ is normal crossing in $\CC^2$ forces either $\supp(D)=\supp(W)$, or $W=0$.

This pathological case, i.e.,~$M$ is normal crossing but $\ol{M}$ is not, gives rise to a very concrete situation where
both the quotient singularity $X(d;a,b)$ and the topological zeta functions can be explicitly computed. The rest of this section is devoted to this task.  

Note that the normal crossing condition on $M$ implies that $\alpha_1 \beta_2 - \alpha_2 \beta_1 \neq 0$. 
The equation of $C_1$ is mapped to the equation of $C_2$ under the action of $\zeta$, i.e., $f_2(x,y) = \alpha_1 x + \beta_1 \zeta^{b-a} y + \zeta^{-a} (\zeta\cdot h_1)(x,y))$. 
In particular, $\alpha_1 \beta_1 (\zeta^{b-a}-1) \neq 0$, since otherwise $C_1 \cup C_2$ would not define a normal crossing divisor. Similarly, the equation of $C_2$ is mapped to the equation of $C_1$ under the action of $\zeta$. We conclude that $\zeta^{2(b-a)}=1$, and thus that $\zeta^{b-a}=-1$. 

We may assume that $0 \leq a \leq b < d$. Then $0 \leq 2(b-a) < 2d$, and by the above we have also that $2(b-a) \equiv 0 \mod d$ and $b-a \not\equiv 0 \! \mod d$. Hence $2(b-a) = d$. 

Note that the action of $\mu_d$ is small if and only if $a$ and $b$ are both odd. In such a case $B_\rho = 0$ and $d$ is multiple of $4$. If the action is not small, then $a$ is odd and $b$ is even (or resp.~$a$ is even and $b$ is odd), $B_\rho = - \frac{1}{2} \ol{L}_1$ (resp.~$B_\rho = - \frac{1}{2} \ol{L}_2$), and $d \equiv 2 \!\mod 4$.

\smallskip

Case $(1)$: $d\equiv 0 \mod 4$. 
Let $\pi: X \to \CC^2$ be the blow-up of the origin and denote by $\ol{X}$ the quotient $X / \mu_d$. Let $\ol{\pi}: \ol{X} \to \CC^2/\mu_d$ be the induced map on the quotient spaces, which coincides with the blow-up of the origin of $X(d;a,b)$ with weights $(1,1)$.  The quotient morphism $\wt{\rho}: X \to \ol{X}=X / \mu_d$ makes the diagram 
of Theorem~\ref{thm:MultAlphas_proportional} commute.
\begin{figure}[ht]
	\centering
	\definecolor{bluegreen2}{RGB}{0, 85, 127}
	\begin{tikzpicture}[scale=1]
	\colorlet{col1}{blue!40}
	\colorlet{col2}{red!40}
	\colorlet{col3}{violet!40}
	\colorlet{colsing}{bluegreen2}
	\colorlet{colsingtext}{colsing}	
	\def\xmove{120}
	\begin{scope}[xshift=-\xmove]
		\draw[very thick] (-1.75,0)--(1.75,0) node [right, xshift=3pt] {$E \, (2N,2\nu)$};
		\node (X1) at (-.4,0) {$\times$};
		\node (X2) at (.4,0) {$\times$};
		\draw[-open triangle 45] (0.4, -0.8) -- (0.4,1);
		\draw[-open triangle 45] (-0.4, -0.8) -- (-0.4,1);
		\draw[-open triangle 45, dashed] (-1.2, -0.8) -- (-1.2,1);
		\draw[-open triangle 45, dashed] (1.2, -0.8) -- (1.2,1);
		\draw (-1.2,1.1) node[above] {$\widehat{L}_2$};
		\draw (1.2,1.1) node[above] {$\widehat{L}_1$};
		\draw[decoration={brace,raise=1pt},decorate] (-0.55,1) -- node[above=5pt] {$\widehat{D}$} (0.55,1);
	\end{scope}
	\draw[->] (.2,0.5) --  (1.2,0.5) node[midway,above] {$\widetilde{\rho}$};
	\draw (0.7,0.2) node {\small $d:1$};
	\begin{scope}[xshift=\xmove]		
		\draw[very thick] (-2.1,0)--(2.1,0) node [right, xshift=3pt] {$\overline{E} \, \left(\dfrac{4N}{d},\dfrac{4\nu}{d}\right)$};
		\node[below, yshift=-3pt, text=colsingtext] at (-1.9,0) {$(2;1,1)$};
		\draw (-1.2,1) node[above] {$\widehat{\overline{L}}_2$};
		\draw (1.2,1) node[above] {$\widehat{\overline{L}}_1$};
		\draw (0,1.1) node[above] {$\widehat{\overline{D}}$};
		\draw[-open triangle 45, dashed] (-1.2, -0.8) -- (-1.2,1);
		\draw[-open triangle 45, dashed] (1.2, -0.8) -- (1.2,1);
		\node (X1) at (0,0) {$\times$};
		\draw[-open triangle 45] (0, -0.8) -- (0,1);
		\node[color=colsing] (O1) at (-1.2,0) {\Large $\bullet$};
		\node[color=colsing] (O2) at (1.2,0) {\Large $\bullet$};
		\node[below, xshift=10pt, yshift=-3pt, text=colsingtext] at (1.6,0) {$(2;1,1)$};
	\end{scope}
	\end{tikzpicture}
	\caption{Embedded $\QQ$-resolution of $\ol{M}$ in $X(d;a,b)$, where $\widehat{\ol{D}}$ has numerical data $(N,\nu)$, and its covering map.}
	\label{fig:JorgeEx2}
\end{figure}

\noindent In this situation $\ol{\pi}: \ol{X} \to \CC^2/\mu_d$ is an embedded $\QQ$-resolution and we can use the numerical data on the right-hand side of Figure~\ref{fig:JorgeEx2}
to compute the topological zeta function. We divide the computation into four different strata (the open part of $\ol{E}$, the two origins of $\ol{X}$, and $\widehat{\ol{M}} \cap \ol{E}$). We obtain
\begin{equation}\label{Eq:ExZCase1}
	\begin{aligned}
		\Ztopp{{(X(d;a,b),[0])}}(\ol{D},\ol{W};s) &= 
		\frac{-1}{\frac{4N}{d} s + \frac{4\nu}{d}}
		+ 2 \cdot \frac{2}{\frac{4N}{d} s + \frac{4\nu}{d}}
		+ \frac{1}{(\frac{4N}{d} s + \frac{4\nu}{d})(Ns+\nu)}\\%= \frac{d}{4} \cdot \frac{3Ns + 4}{(Ns+1)^2}\\
		& = \frac{d}{4} \cdot \frac{3Ns + 3\nu+1}{(Ns+\nu)^2}.
	\end{aligned}  
\end{equation}

Following Figure~\ref{fig:JorgeEx2}, the two origins of $\ol{X}$ are singular, of type $(2;1,1)$. Blowing up these two points, one obtains a usual embedded resolution as shown in Figure~\ref{fig:JorgeEx3A}, where the numbers between brackets are self-intersection numbers. A concrete instance of this case is presented in Example~\ref{ex:noQNC}.

\begin{figure}[ht]
	\centering
	\definecolor{bluegreen2}{RGB}{0, 85, 127}
	\begin{tikzpicture}[scale=1]
	\colorlet{col1}{blue!40}
	\colorlet{col2}{red!40}
	\colorlet{col3}{violet!40}
	\colorlet{colsing}{bluegreen2}
	\colorlet{colsingtext}{colsing}
	\draw[very thick] (-1.9,0)--(1.9,0) node [right, xshift=3pt] {$\overline{E} \, \left[-\dfrac{d+4}{4}\right]$};
	\draw (-1.2,1) node[above, xshift=0pt] {\footnotesize $[-2]$};
	\draw (1.2,1) node[above, xshift=0pt] {\footnotesize $[-2]$};
	\draw (0,1.1) node[above] {$\widehat{\overline{D}}$};
	\draw (-1.2, -0.8) -- (-1.2,1);
	\draw (1.2, -0.8) -- (1.2,1);
	\node (X1) at (0,0) {$\times$};
	\draw[-open triangle 45] (0, -0.8) -- (0,1);
    \end{tikzpicture}
	\caption{Case $(d;1,\frac{d}{2}+1)$, $d \equiv 0 \ (4)$.}
	\label{fig:JorgeEx3A}
\end{figure}

\begin{ex}\label{ex:noQNC}
	Consider the divisor $D: (x^2+y^2)^N=0$, with two irreducible components in $\CC^2$ (and $W=0$). Take the natural quotient map $\CC^2\to X(4;1,3)$ with empty branch locus. Then $D$ is normal crossing up to a change of variables, but  $\ol{D}$ in $X(4;1,3)$ has a unique irreducible component and is not  $\QQ$-normal crossing. This follows from the fact that there is no analytic change of variables in $X(4;1,3)$ which transforms $\ol{D}$ into an axis and keeps the action diagonal. 
	An embedded $\QQ$-resolution can be achieved by a single blow-up, see~Figure~\ref{fig:JorgeEx2}. 
	Finally
	\[
	\Ztopp{(\CC^2,0)}(D,0;s)=\frac{1}{(Ns+1)^2}
	\ \text{ , but }\quad
	\Ztopp{(X(4;1,3),[0])}(\ol{D},-B_\rho;s)=\frac{3Ns+4}{(Ns+1)^2}.
	\]
\end{ex}

Case $(2)$: $d\equiv 2 \mod 4$, with $a$ odd and $b$ even. 
As in the previous case, $\pi: X \to \CC^2$ denotes the blow-up of the origin and $\ol{\pi}: \ol{X} \to \CC^2/\mu_d$ denotes the induced map on the quotient spaces. We show the configuration of  $\ol{X}$ in Figure~\ref{fig:JorgeExCase2}.

\begin{figure}[ht]
\centering
\definecolor{bluegreen2}{RGB}{0, 85, 127}
\begin{tikzpicture}[scale=1]
	\colorlet{col1}{blue!40}
	\colorlet{col2}{red!40}
	\colorlet{col3}{violet!40}
	\colorlet{colsing}{bluegreen2}
	\colorlet{colsingtext}{colsing}		
	\draw[very thick] (-2.1,0)--(2.1,0) node [right, xshift=3pt] {$\overline{E} \, \left(\dfrac{4N}{d},\dfrac{4\nu}{d}\right)$};
	\node[below, yshift=-3pt, text=colsingtext] at (-1.9,0) {$(2;1,1)$};
	\draw (-1.2,1) node[above] {$\widehat{\overline{L}}_2$};
	\draw (1.2,1) node[above] {$\widehat{\overline{L}}_1$};
	\draw (0,1.1) node[above] {$\widehat{\overline{D}}$};
	\draw[-open triangle 45, dashed] (-1.2, -0.8) -- (-1.2,1);
	\draw[-open triangle 45] (1.2, -0.8) -- (1.2,1);
	\node (X1) at (0,0) {$\times$};
	\draw[-open triangle 45] (0, -0.8) -- (0,1);
	\node[color=colsing] (O1) at (-1.2,0) {\Large $\bullet$};
\end{tikzpicture}
\caption{Embedded $\QQ$-resolution of $\ol{M}$ in $X(d;a,b)$, where $\wh{\ol{D}}$ and $\wh{\ol{L}}_1$ have numerical data $(N,\nu)$ and $(0,1/2)$, respectively.}
\label{fig:JorgeExCase2}
\end{figure}

In this case the topological zeta function $\Ztopp{{(X(d;a,b),[0])}}(\ol{D},\ol{W};s)$  is given by 
$$
	\frac{-1}{\frac{4N}{d} s + \frac{4\nu}{d}}
	+ \frac{2}{\frac{4N}{d} s + \frac{4\nu}{d}}
	+ \frac{1}{(\frac{4N}{d} s + \frac{4\nu}{d})(0\cdot  s + \frac{1}{2})}
	+ \frac{1}{(\frac{4N}{d} s + \frac{4\nu}{d})(Ns+\nu)},
$$
which coincides with the expression~\eqref{Eq:ExZCase1}. There is only one quotient singular point of type $(2;1,1)$. Again, blowing-up this point one gets a usual embedded resolution as shown in Figure~\ref{fig:JorgeEx4}.

\begin{figure}[ht]
\centering
\definecolor{bluegreen2}{RGB}{0, 85, 127}
\begin{tikzpicture}[scale=1]
	\colorlet{col1}{blue!40}
	\colorlet{col2}{red!40}
	\colorlet{col3}{violet!40}
	\colorlet{colsing}{bluegreen2}
	\colorlet{colsingtext}{colsing}
	\draw[very thick] (-1.9,0)--(1.9,0) node [right, xshift=3pt] {$\overline{E} \, \left[-\dfrac{d+2}{4}\right]$};
	\draw (-1.2,1) node[above, xshift=0pt] {\footnotesize $[-2]$};
	\draw (1.2,1) node[above] {$\widehat{\overline{L}}_1$};
	\draw (0,1.1) node[above] {$\widehat{\overline{D}}$};
	\draw (-1.2, -0.8) -- (-1.2,1);
	\draw[-open triangle 45] (1.2, -0.8) -- (1.2,1);
	\node (X1) at (0,0) {$\times$};
	\draw[-open triangle 45] (0, -0.8) -- (0,1);
\end{tikzpicture}
\caption{Case $(d;1,\frac{d}{2}+1) \simeq (\frac{d}{2};1,\frac{d+2}{4})$, $d \equiv 2 \ (4)$.}
\label{fig:JorgeEx4}
\end{figure}

In both cases, note that Theorem~\ref{Thm:Comparison_Ztop} holds, but not the conclusion of Theorem~\ref{Thm:dZtop}. This phenomenon evidences that the hypotheses of Theorem~\ref{Thm:dZtop} are sharp.

\begin{remark}\label{rmk:BpNeverTangent}
  The analysis above describes any local situation that can occur for finite cyclic quotients of normal crossing divisors. In particular, it has the following consequences for constructing a compatible $\QQ$-resolution $\ol{\pi}\colon \ol{X} \to \CC^2/G$ from a usual resolution $\pi\colon X\to \CC^2$, produced as a finite sequence of blow-ups 
  (verifying the hypothesis of Theorem~\ref{thm:MultAlphas_proportional}).
  \begin{enumerate}[label=(\roman*)]
    \item If $\supp(D)\cup\supp(W)$ is already normal crossing in $\CC^2$, then $\pi=\id$ induces a $\QQ$-resolution $\ol{\pi}$ in the target, except in the pathological case described in Section~\ref{subsec:orbit-pathology}, where an extra blow-up  suffices to achieve the construction. In both situations, equality~\eqref{eq:equality-top} of Theorem~\ref{Thm:Comparison_Ztop} was verified by explicit computations of the corresponding topological zeta functions.
    
    \item Assume $\pi\neq\id$. Moreover, suppose that for any chart $U\subset X$ the restricted map $\wt{\rho}_{|U}\colon U\to \wt{\rho}(U)$ is equivalent to the natural covering $\CC^2\to X(d;a,b)$ by the action of some cyclic group $\mu_d$. Then $\ol{\pi}$ is a $\QQ$-resolution. This follows because any normal crossing situation in $X$ involves at least an exceptional component $E$, which is locally $\mu_d$-invariant by construction. Thus, only the non-pathological cases described in Section~\ref{subsec:non-pathological} may occur locally, and we have a $\QQ$-normal crossing in $\ol{X}$. In particular, the strict transform in $\ol{X}$ of the branch divisor $B_\rho$ becomes disjoint from the strict transform of $\ol{D}$ and $\ol{W}$.
  \end{enumerate}
\end{remark}

	% =========================================================
	% Poles of motivic zeta functions
	% =========================================================
	\bigskip
	\section{Poles of motivic zeta functions}\label{sec:Zmot}
	
	We study relations between poles at the level of motivic zeta functions, establishing the inclusion stated in~Theorem~\ref{Thm:Comparison_Zmot} by constructing embedded $\QQ$-resolutions in the source and the target, and then making use of the geometrical characterization of motivic poles. Finally, we discuss an example where the reverse inclusion of Theorem~\ref{Thm:Comparison_Zmot}  is not satisfied.
		
	\subsection{Induced resolutions and coverings of exceptional divisors}\label{subsec:Zmot_inducedRes}
	
	We go back to the original setting of Section~\ref{ssec:Numerical_Relations}, namely, $\rho\colon (S, o)\to (\ol{S}, \ol{o})$ is a finite morphism between normal surfaces $S$ and $\ol{S}$, with associated ramification divisor $\Ram_\rho$ on $S$ and branch divisor $B_\rho$ on $\ol{S}$.
	Fix two $\QQ$-Weil divisors  $\ol{D}$ and $\ol{W}$ on $(\ol{S},\ol{o})$, and put  $D = \rho^* \ol{D} $ and $ W =\rho^* \ol{W} + \Ram_\rho$. We  take $\ol{M}=\supp(\ol{D})\cup\supp(\ol{W})\cup\supp(B_\rho)$, and we fix an embedded resolution $\ol{\pi} \colon \ol{X}\to (\ol{S},\ol{o})$ of $\ol{M}$. %
	We are going to construct a $\QQ$-resolution of $M=\rho^{-1}(\ol{M})$ in the source, which commutes with the resolution in the target, generalizing the ideas of Jung's construction~\cite{Ju08}, see also~\cite{Pop11} for a modern presentation.
	
	\vspace{1cm}
	
	First, take the fiber product $Y=S\times_{\overline{S}}\ol{X}$. Note that $Y$ could be non reduced, so we consider its reduction $Y_\mathrm{red}$, and then its normalization $Z\rightarrow Y_\mathrm{red}$. We obtain the following commutative diagram.
	\begin{center}
		\begin{tikzcd}
			S \arrow[d, twoheadrightarrow, "\rho"]
			& \arrow[l, dashed] Y=S\times_{\overline{S}}\ol{X} \arrow[d, dashed, twoheadrightarrow] \arrow[r, hookleftarrow] &   Y_\mathrm{red}  & \arrow[l] Z \arrow[dll, dashed, twoheadrightarrow, "\wt{\rho}"] \arrow[lll, dashed, "\pi" above, bend right]\\
			\overline{S} & \arrow[l, "\ol{\pi}" above] \ol{X}
		\end{tikzcd}
	\end{center}
	
	\begin{prop}\label{prop:pi-Q-res}
	  As constructed above, we have a commutative diagram
	  \begin{equation}\label{eq:diagramQresS}
		\begin{tikzcd}
			S \arrow[d, twoheadrightarrow, "\rho"]
			& \arrow[l, dashed, "\pi" above] Z \arrow[d, dashed, twoheadrightarrow, "\wt{\rho}"]\\
			\overline{S} & \arrow[l, "\ol{\pi}" above] \ol{X}
		\end{tikzcd}
	  \end{equation}
	  verifying the following properties:
	  \begin{enumerate}
	
	    \item $\wt{\rho}$ is a finite morphism of the same degree as $\rho$,
	
	    \item $Z$ is a normal surface whose singularities are cyclic quotient singularities.
	  \end{enumerate}
	  Moreover, $\pi: Z\to (S,o)$ is an embedded $\QQ$-resolution of $M$. 
	\end{prop}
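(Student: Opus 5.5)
The plan is to follow Jung's method and work locally over $\ol{X}$. Since $\ol{\pi}$ is an isomorphism outside $\ol{\pi}^{-1}(\ol{o})$, the fibre product $Y$ is isomorphic to $S$ over $\ol{S}\setminus\{\ol{o}\}$; there it is reduced and normal, because $S$ is. Hence $Z\to Y_{\mathrm{red}}\to S$ is proper and bimeromorphic. It is an isomorphism off $\pi^{-1}(o)$, so in particular off $\pi^{-1}(M)$, and this gives the map $\pi$ and the commutativity of \eqref{eq:diagramQresS}.

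For (1), the projection $Y\to\ol{X}$ is the base change of the finite map $\rho$, so it is finite. Reduction is a closed immersion and normalization is finite, so $\wt{\rho}$ is finite. Over the dense open set $\ol{X}\setminus\ol{\pi}^{-1}(\ol{o})$ it coincides with $\rho$, so the function field extension $K(Z)/K(\ol{X})$ equals $K(S)/K(\ol{S})$ and $\deg\wt{\rho}=\deg\rho$, as in Theorem~\ref{thm:MultAlphas_proportional}. By construction $Z$ is a normal surface.

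For (2), and the $\QQ$-normal crossing property, I will use the purity of the branch locus (Zariski--Nagata). The branch locus of $\rho$ is contained in $\supp(B_\rho)\cup\{\ol{o}\}$, so the branch locus of $\wt{\rho}$ is contained in $\ol{\pi}^{-1}(\ol{M})$, which is a normal crossing divisor in the smooth surface $\ol{X}$. Take a point $q\in\ol{X}$ and a small polydisc $U$ centred at $q$ with coordinates $(u,v)$ in which $\ol{\pi}^{-1}(\ol{M})\cap U\subset\{uv=0\}$. Then $U\setminus\{uv=0\}\simeq(\Delta^*)^2$ has abelian fundamental group $\ZZ^2$. Each connected component of $\wt{\rho}^{-1}(U)$ is a normal analytic space that is finite over $U$ and unramified over $(\Delta^*)^2$. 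By the Grauert--Remmert extension theorem, such a component is therefore the normalization of $U$ in a finite abelian (Galois, since the group is abelian) unramified cover of $(\Delta^*)^2$. This normalization is $\CC^2/H$ for the kernel subgroup $H\subset\mu_{k}\times\mu_{l}$, via $(x,y)\mapsto(x^k,y^l)=(u,v)$. The group $H$ acts diagonally and is abelian, so by Remark~\ref{rmk:AllAbelianAreCyclic} the point is a cyclic quotient singularity $X(m;a,b)$ with diagonal coordinates $x,y$.

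In these coordinates $\wt{\rho}^{-1}(\{uv=0\})$ is the image of $\{xy=0\}$, so $\pi^{-1}(M)=\wt{\rho}^{-1}(\ol{\pi}^{-1}(\ol{M}))$ is locally given by a monomial $x^{k_1}y^{k_2}$. This is $\QQ$-normal crossing, which completes (2) and the claim that $\pi$ is an embedded $\QQ$-resolution of $M=\rho^{-1}(\ol{M})$. Note that $\supp(\Ram_\rho)\subset\rho^{-1}(\supp B_\rho)$ is included in $M$.

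The main obstacle I expect is the local model step. One must check carefully that the analytic local structure of the algebraic normalization $Z$ is indeed the normalization of $U$ in the corresponding cover of $(\Delta^*)^2$, where normalization commutes with analytic localization. One must also check that the quotient description is compatible with the orbifold coordinate convention in Definition~\ref{defi:Qres}, that is, that the diagonal coordinates survive the reduction from $H$ to a cyclic group. I would handle the second point by explicitly using the isomorphisms (a)--(d) of Section~\ref{sec:embQres}, which send coordinate axes to coordinate axes.
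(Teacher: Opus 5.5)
Your proposal is correct and follows the same Jung-type route as the paper. Finiteness and degree come from base change, reduction and normalization. Then $\wt{\rho}$ is branched only over the normal crossing divisor $\ol{\pi}^{-1}(\ol{M})$, which forces cyclic quotient singularities and $\QQ$-normal crossings on $Z$.

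The only difference is in how the local structure is obtained. The paper cites \cite[Theorem~5.2]{CompactComplexSurfaces:book} and \cite[Theorem~2.3.1]{Nemethi22:book} for it. You reprove it directly via purity of the branch locus, $\pi_1((\Delta^*)^2)=\ZZ^2$ and the Grauert--Remmert extension theorem. You also make explicit, using Remark~\ref{rmk:AllAbelianAreCyclic}, the $\QQ$-normal crossing check that the paper calls clear by construction.
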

	
	\begin{proof}
	  By properties of fiber products and normalization, it follows that~\eqref{eq:diagramQresS} is a commutative diagram with $\pi$ proper and birational. In addition, since normalization preserves finiteness, $\wt{\rho}$ is finite of degree $\deg(\wt{\rho}) = [\K(Z):\K(\ol{X})]=[\K(S):\K(\ol{S})]=\deg(\rho)$.
	
	  Let $\ol{E}_i$ be an  exceptional component of $\ol{\pi}$. Since $\wt{\rho}$ is a finite morphism, $\wt{\rho}^{-1}(\ol{E}_i)$ is the union of certain irreducible components $E_1,\ldots, E_{r_i}$ in $Z$, and each $\wt{\rho}\,|_{E_j} : E_j\to \ol{E}_i$ is a finite morphism. Now, for any $j=1,\ldots, r_i$, one has that  $(\ol{\pi}\circ\wt{\rho})(E_j)=\set{\ol{o}}$, since $\wt{\rho}(E_j)=\ol{E}_i$ is exceptional. By commutativity and the fact that $\rho^{-1}(\ol{o})=\set{o}$, one has that $\pi(E_j)=\set{o}$. Thus, $E_j$ is an exceptional component of $\pi$ in $Z$.
	
	 Next, we study the branch locus $B_{\wt{\rho}}$. 
It is included in the union of the exceptional locus of $\ol{\pi}$ and the strict transform of $B_\rho$. Hence it has a normal crossing in $\ol{X}$, in particular,
any point on $B_{\wt{\rho}}$ is at worst a node. By~\cite[Theorem~5.2]{CompactComplexSurfaces:book}, see also~\cite[Theorem~2.3.1]{Nemethi22:book}, the singular points of $Z$ are cyclic quotient singularities. Moreover, analytically at any point $z\in Z$, the finite morphism $\wt{\rho}\colon (Z,z)\to (\CC^2,0)$ is isomorphic to the one given by the composition of maps
	 \begin{equation}\label{Eq:localZ}
	 	\begin{array}{ccccc}
	 		(\CC^2/\mu_m,[0]) & \longrightarrow & X_{m,q} & \longrightarrow & (\CC^2,0)\\[.1em]
	 		\left[(u^m,v^m)\right] & \longmapsto & (u,v,u^{m-q}v) & \longmapsto & (u^a,v^b)
	 	\end{array}
	 \end{equation}
	  for some $a,b>0$, see e.g.~\cite[p.~103]{CompactComplexSurfaces:book} and~\cite[p.~40]{Nemethi22:book}. Here $X_{m,q}$ is the surface defined in Remark~\ref{rmk:Anq}.
	  	
	 Finally, it is clear by construction that $Z$ is a $V$-manifold and that $\pi^{-1} (M)$ has $\QQ$-normal crossing. Since the restriction of $\ol{\pi}$ to $\ol{X}\setminus\ol{\pi}^{-1}(\ol{M})$ is an isomorphism, it turns out that $\pi|_{\pi^{-1}(M)}$ is also an isomorphism, and hence $\pi$ is an embedded $\QQ$-resolution of $M$.
	\end{proof}
	
	\subsection{Proof of Theorem~\ref{Thm:Comparison_Zmot}}\label{sec:proof-main-thm1}
	
	We must show that $\P^{\mot}_{(\ol{S},\ol{o})}(\ol{D},\ol{W}) \subset \P^{\mot}_{(S, o)}(D,W)$, and that the order of the poles is preserved. 
	Consider the commutative diagram constructed in~\eqref{eq:diagramQresS}. Recall that
	\begin{itemize}
		\item $\ol{\pi} \colon \ol{X}\to (\ol{S},\ol{o})$ is an embedded resolution of $\ol{M}=\supp(\ol{D})\cup\supp(\ol{W})\cup\supp(B_\rho)$,
		
		\item $\pi \colon Z\to (S,o)$ is an embedded $\QQ$-resolution of $M= \rho^{-1}(\ol{M})$,  %=\supp(D)\cup\supp(W)$,
		
		\item $\wt{\rho} \colon Z \to \ol{X}$ is a finite morphism, induced from  $\rho \colon S \to \ol{S}$.
	\end{itemize}
	Also, let $\{E_j\}_{j\in J}$ (resp.~$\{\ol{E}_i\}_{i\in I}$) be the irreducible components (of the exceptional divisor and the  strict transform) of $\pi$ (resp.~$\ol{\pi}$), and denote by $(N_j,\nu_j)$ (resp.~$(\ol{N}_i,\ol{\nu}_i)$) the {numerical data} of $E_j$ (resp.~$\ol{E}_i$) for $j\in J$ (resp.~$i\in I$).
	
	Assume that $s_0\in\QQ$ is a pole of $\Zmott{(\ol{S},\ol{o})}(\ol{D},\ol{W})$. First, if $s_0$ is a pole of order 2, then there exist two intersecting divisors $\ol{E}$ and $\ol{E}'$ with numerical data $(\ol{N},\ol{\nu})$ and $(\ol{N}',\ol{\nu}')$, such that $s_0 = -\ol{\nu}/\ol{N} = - \ol{\nu}'/\ol{N}'$. Take two intersecting components $E$ and $E'$ projecting on $\ol{E}$ and $\ol{E}'$, with numerical data $(N,\nu)$ and $(N',\nu')$, respectively. By Theorem~\ref{thm:MultAlphas_proportional}, one has  $\nu/N = \ol{\nu}/\ol{N}$ and $\nu'/N' = \ol{\nu}'/\ol{N}'$. Thus, $s_0$ is a candidate pole of order 2 of $\Zmott{(S,o)}(D,W)$, and by Proposition~\ref{RVmot}(1) 
	it is a true pole of order 2.
	
	Suppose now that $s_0\in\QQ$ is a pole of order 1 of $\Zmott{(\ol{S},\ol{o})}(\ol{D},\ol{W})$. By Theorem~\ref{Thm:RodVeysQres}, $s_0=-\ol{\nu}_i/\ol{N}_i$,  where $(\ol{N}_i,\ol{\nu}_i)$ comes from one of the following situations:
	\begin{enumerate}[label=(\roman*)]
		\item from some irreducible component $\ol{E}_i$ in the set $\wh{\ol{D}}\cap\wh{\ol{W}}$ or in the set $\wh{\ol{D}}\setminus\wh{\ol{W}}$ (in which case $\ol{\nu}_i=1$), or
		\item from some non-rational exceptional curve $\ol{E}_i$, or
		\item[(iv)] from some rational exceptional curve $\ol{E}_i$, intersecting other components $\ol{E}_\ell$ in points $Q_{\ell}$, with associated $\ol{\alpha}_{Q_\ell}:=\ol{\alpha}_\ell\neq 1$, at least three times.
	\end{enumerate}	
 
\noindent
{\em Case} (i). By commutativity, there exists a component $E_j$ of the strict transform of $D$ in $X$ projecting onto $\ol{E}_i$, with numerical data $(N_j,\nu_j)$, such that $s_0=-\nu_j/N_j$. Note that $E_i$ cannot be a component in $\wh{W}\setminus\wh{D}$, since in particular $N_i\neq 0$. Applying Theorem~\ref{Thm:RodVeysQres}(i) in this case, $s_0$ is a pole of order 1 of $\Zmott{(S,o)}(D,W)$. %
	
\noindent
{\em Case} (ii). 
By Hurwitz's formula,
	$g(\ol{E})\leq g(E)$, implying that  $s_0$ is a pole in the source by Theorem~\ref{Thm:RodVeysQres}(ii). 

	\noindent
{\em Case} (iv).
Assume thus that $\ol{E}_i$ is a rupture component. 
	Since $s_0$ is not a double pole of $\Zmott{(\ol{S},\ol{o})}(\ol{D},\ol{W})$, we have that $\ol{\alpha}_\ell\neq 0$ for any $Q_\ell\in\ol{E}_i$. Now fix  an exceptional component $E_j$ in $Z$ with numerical data $(N_j,\nu_j)$ mapping onto $\ol{E}_i$, with thus  $-\nu_j/N_j=s_0$. We can assume that $E_j$ is rational, otherwise we are in case (ii). From Theorem~\ref{thm:MultAlphas_proportional}(2), it follows that $\alpha_k\neq 0$ for any point $P_k\in\P_{\pi}$ such that $\wt{\rho}(P_k)=Q_\ell$. Therefore, we are within the assumptions of Theorem~\ref{thm:AtMostTwoAlphas}, so 	$E_j$ is a rupture component 	and then Theorem~\ref{Thm:RodVeysQres}\ref{ResPart4} implies that $s_0$ is a pole of order 1 of $\Zmott{(S,o)}(D,W)$. \hfill $\square$
	
 	\medskip
 	
The following example shows that the inclusion $\P^{\mot}_{(S, o)}(D,W)\subseteq \P^{\mot}_{(\ol{S},\ol{o})}(\ol{D},\ol{W})$ 
is not true in general.

\begin{ex}\label{ex:noPoleDownstairs}
	Let $S=\CC^2$, $\ol{S}=X(2;1,1)$, and let $\rho\colon \CC^2 \twoheadrightarrow X(2;1,1)$ be the natural covering, as in Setting~\ref{setting2}. Consider the divisors in $\CC^2$, given by \[D\colon x^4+y^{6}=0 \quad\text{and}\quad W\colon x^{4-1}=0.
	\]
	By using a $(3,2)$-blow-up as in Section~\ref{sec:embQres}, we obtain an embedded $\QQ$-resolution of $(D,W)$. There is just one exceptional
	component $E$, with numerical data $(N,\nu) = (12,14)$.  Moreover, $E$ contains two
	singular points $P_1$ and $P_2$ of orders $3$ and $2$, respectively. The strict transform of $D$ intersects $E$ in two smooth points $\set{P_3,P_4}$ of the ambient space, while the strict transform of $W$ intersects $E$ in $P_2$, see Figure~\ref{fig:noPoleDownstairs}.
	The associated $\alpha$-values at each point are
	\[
	\alpha_1 = \frac13,\quad \alpha_2 = \frac{4}{2}=2,\quad\text{and}\quad \alpha_3=\alpha_4=-\frac{1}{6},
	\]
	thus $s_0=-\frac76$ is a pole of $\Zmott{(\CC^2,0)}(D,W;s)$ of order 1.
	
	Since $D$ and $W$ induce well-defined divisors $\ol{D}$ and $\ol{W}$ in $X(2;1,1)$, one may follow the procedure described in~\eqref{eq:chartC2G} to construct an embedded $\QQ$-resolution of $(\ol{D}, \ol{W})$ in $X(2;1,1)$. In that case the exceptional component contains two cyclic quotient singularities of orders $6$ and $4$.
	
	\begin{figure}[ht]
	\centering
	\definecolor{bluegreen2}{RGB}{0, 85, 127}
	\begin{tikzpicture}[scale=1.1]
	\colorlet{col1}{blue!40}
	\colorlet{col2}{red!40}
	\colorlet{col3}{violet!40}
	\colorlet{colsing}{bluegreen2}
	\colorlet{colsingtext}{colsing}
	\def\xmove{100}
	\begin{scope}[xshift=-\xmove]
	\draw[very thick] (-1.75,0)--(1.75,0) node [right, xshift=3pt] {$E \, (12,14)$};
	\node[color=colsing] (O1) at (-1.2,0) {\Large $\bullet$};
	\node[below, yshift=-3pt, text=colsingtext] at (O1) {$(3)$};
	\node (X1) at (-.5,0) {$\times$};
	\node (X2) at (.5,0) {$\times$};
	\draw[-open triangle 45] (0.5, -0.75) -- (0.5,1);
	\draw[-open triangle 45] (-0.5, -0.75) -- (-0.5,1);
	\draw[decoration={brace,raise=1pt},decorate] (-0.6,1) -- node[above=5pt] {$\widehat{D}$} (0.6,1);
	\node[color=colsing] (O2) at (1.2,0) {\Large $\bullet$};
	\draw[-open triangle 45, dashed] (1.2, -0.75) -- (1.2,1);
	\node[below, xshift=10pt, yshift=-3pt, text=colsingtext] at (O2) {$(2)$};
	\node[above, xshift=10pt, yshift=20pt] at (O2) {$\widehat{W}$};
	\node[above] at (O1) {\footnotesize $P_1$}; \node[above right] at (O2) {\footnotesize $P_2$};
	\node[above right] at (X1) {\footnotesize $P_3$}; \node[above right] at (X2) {\footnotesize $P_4$};
	\end{scope}
	\draw[->] (.2,0.5) --  (1.2,0.5) node[midway,above] {\small $2:1$};	
	\begin{scope}[xshift=\xmove]
	\draw[very thick] (-1.75,0)--(1.75,0) node [right, xshift=3pt] {$\overline{E} \, (12,14)$};
	\node[color=colsing] (O1) at (-1.2,0) {\Large $\bullet$};
	\node[below, yshift=-3pt, text=colsingtext] at (O1) {$(6)$};
	\node (X1) at (0,0) {$\times$};
	\draw[-open triangle 45] (0, -0.75) -- (0,1);
	\node[above, xshift=10pt, yshift=20pt] at (X1) {$\widehat{\overline{D}}$};
	\node[color=colsing] (O2) at (1.2,0) {\Large $\bullet$};
	\draw[-open triangle 45, dashed] (1.2, -0.75) -- (1.2,1);
	\node[below, xshift=10pt, yshift=-3pt, text=colsingtext] at (O2) {$(4)$};
	\node[above, xshift=10pt, yshift=20pt] at (O2) {$\widehat{\overline{W}}$};
	\node[above] at (O1) {\footnotesize $Q_1$}; \node[above right] at (O2) {\footnotesize $Q_2$};
	\node[above] at (X1) {\footnotesize $Q_3=Q_4$};
	\end{scope}
	\end{tikzpicture}
	\caption{Two embedded $\QQ$-resolutions: of $D\colon x^4+y^{6}=0$ and $W\colon x^{4-1}=0$ in $\CC^2$, and of their images in the quotient $\CC^2\to X(2;1,1)$.}
	\label{fig:noPoleDownstairs}
	\end{figure}
	
	In the target, we have that $\ol{\alpha}_1=\frac16$, $\ol{\alpha}_2 = 1$ and $\ol{\alpha}_3 = -\frac16$ are the associated $\alpha$-values of the points $Q_1$, $Q_2$ and $Q_3$, projections of $P_1$, $P_2$ and $\set{P_3,P_4}$, respectively. We conclude that $s_0$ is a pole for the source, but not a pole for the target $X(2;1,1)$.
	Note that this phenomenon remains true on the topological level. More precisely, the topological zeta functions are
	\begin{equation}\label{ex:ztop-C2-C2G}
		\Ztopp{(\CC^2,0)}(D,W;s)=\frac{3s+7}{4(s+1)(6s+7)} \quad \text{and} \quad
		\Ztopp{(X(2;1,1),[0])}(\ol{D},\ol{W};s)=\frac{1}{2(s+1)}.
	\end{equation}
	This example illustrates the geometric situation described in Proposition~\ref{prop:losingPoles}(3)--(b), when the condition of being a rupture component is not preserved.
\end{ex}

	% =========================================================
	% On the level of topological zeta functions
	% =========================================================
	\bigskip
	\section{On topological zeta functions}\label{sec:Ztop}

As recalled in~\eqref{eq:top-mot}, since the topological zeta function is a specialization of the motivic one via the Euler characteristic, one has 
\begin{equation*}
	\P^{\topo}_{(S,o)}(D,W)
	\subset \P^{\mot}_{(S,o)}(D,W).
\end{equation*}
Theorem~\ref{Thm:Comparison_Zmot} does not automatically provide the similar relation for the topological
zeta function in the context of finite morphisms, since in general the inclusion above is strict, see the first part of Example~\ref{ex:noPoleUpstairs}. 
In fact, on the level of the topological zeta function, neither the relation $\P^{\topo}_{(\ol{S},\ol{o})}(\ol{D},\ol{W}) \subset \P^{\topo}_{(S, o)}(D,W)$ nor  $\P^{\topo}_{(\ol{S},\ol{o})}(\ol{D},\ol{W}) \supset \P^{\topo}_{(S, o)}(D,W)$ holds,
as evidenced by Example~\ref{ex:noPoleUpstairs} and~\eqref{ex:ztop-C2-C2G} in Example~\ref{ex:noPoleDownstairs}, respectively.

	\begin{ex}\label{ex:noPoleUpstairs}
	Let $S=\CC^2$, $\ol{S}=X(2;1,1)$, and let $\rho\colon \CC^2 \twoheadrightarrow X(2;1,1)$ be the natural covering, as in Setting~\ref{setting2}. Consider the divisors in $\CC^2$  given by
	\[D\colon x^4+y^{10}=0 \quad\text{and}\quad W\colon x^{6-1}=0.
	\]
By using a $(5,2)$-blow-up as in Section~\ref{sec:embQres}, we obtain an embedded $\QQ$-resolution of $(D,W)$. 
	The associated $\alpha$-values at each point on the unique exceptional component, see the left-hand side of Figure~\ref{fig:noPoleUpstairs}, are
	\[
	\alpha_1 = \frac15,\quad \alpha_2 = \frac{6}{2}=3,\quad\text{and}\quad \alpha_3=\alpha_4=-\frac{3}{5}.
	\]
	Thus, $s_0=-\frac85$ is a pole of $\Zmott{(\CC^2,0)}(D,W;s)$ of order 1, by Theorem~\ref{Thm:RodVeysQres}\ref{ResPart4}.
	However, from~\eqref{Eqn:ZtopQres} and some computations, we obtain that
	\[
	\Ztopp{(\CC^2,0)}(D,W;s)=\frac{1}{6(s+1)}.
	\]		
Constructing  an embedded $\QQ$-resolution of $(\ol{D}, \ol{W})$ 	via the procedure described in~\eqref{eq:chartC2G},
again~\eqref{Eqn:ZtopQres} yields
	\[
	\Ztopp{(X(2;1,1),[0])}(\ol{D},\ol{W};s)=\frac{29s+32}{12(s+1)(5s+8)},
	\]
	showing that $-\frac{8}{5}$ is a pole of the latter, but not a pole of $\Ztopp{(\CC^2,0)}(D,W;s)$. An easy computation gives 
	$  \ol{\alpha}_1 = \frac{1}{10}$, $\ol{\alpha}_2 = \frac{3}{2}$,  and  $\ol{\alpha}_3=-\frac{3}{5}$
	with respect to $\ol{E}$, see the right-hand side of Figure~\ref{fig:noPoleUpstairs}.
	
	\begin{figure}[ht]
	\centering
	\definecolor{bluegreen2}{RGB}{0, 85, 127}
	\begin{tikzpicture}[scale=1.1]
	\colorlet{col1}{blue!40}
	\colorlet{col2}{red!40}
	\colorlet{col3}{violet!40}
	\colorlet{colsing}{bluegreen2}
	\colorlet{colsingtext}{colsing}
	\def\xmove{100}
	\begin{scope}[xshift=-\xmove]
	\draw[very thick] (-1.75,0)--(1.75,0) node [right, xshift=3pt] {$E \, (20,32)$};
	\node[color=colsing] (O1) at (-1.2,0) {\Large $\bullet$};
	\node[below, yshift=-3pt, text=colsingtext] at (O1) {$(5)$};
	\node (X1) at (-.5,0) {$\times$};
	\node (X2) at (.5,0) {$\times$};
	\draw[-open triangle 45] (0.5, -0.75) -- (0.5,1);
	\draw[-open triangle 45] (-0.5, -0.75) -- (-0.5,1);
	\draw[decoration={brace,raise=1pt},decorate] (-0.6,1) -- node[above=5pt] {$\widehat{D}$} (0.6,1);
	\node[color=colsing] (O2) at (1.2,0) {\Large $\bullet$};
	\draw[-open triangle 45, dashed] (1.2, -0.75) -- (1.2,1);
	\node[below, xshift=10pt, yshift=-3pt, text=colsingtext] at (O2) {$(2)$};
	\node[above, xshift=10pt, yshift=20pt] at (O2) {$\widehat{W}$};
	\node[above] at (O1) {\footnotesize $P_1$}; \node[above right] at (O2) {\footnotesize $P_2$};
	\node[above right] at (X1) {\footnotesize $P_3$}; \node[above right] at (X2) {\footnotesize $P_4$};
	\end{scope}
	\draw[->] (.2,0.5) --  (1.2,0.5) node[midway,above] {\small $2:1$};
	\begin{scope}[xshift=\xmove]
	\draw[very thick] (-1.75,0)--(1.75,0) node [right, xshift=3pt] {$\overline{E} \, (20,32)$};
	\node[color=colsing] (O1) at (-1.2,0) {\Large $\bullet$};
	\node[below, yshift=-3pt, text=colsingtext] at (O1) {$(10)$};
	\node (X1) at (0,0) {$\times$};
	\draw[-open triangle 45] (0, -0.75) -- (0,1);
	\node[above, xshift=10pt, yshift=20pt] at (X1) {$\widehat{\overline{D}}$};
	\node[color=colsing] (O2) at (1.2,0) {\Large $\bullet$};
	\draw[-open triangle 45, dashed] (1.2, -0.75) -- (1.2,1);
	\node[below, xshift=10pt, yshift=-3pt, text=colsingtext] at (O2) {$(4)$};
	\node[above, xshift=10pt, yshift=20pt] at (O2) {$\widehat{\overline{W}}$};
	\node[above] at (O1) {\footnotesize $Q_1$}; \node[above right] at (O2) {\footnotesize $Q_2$};
	\node[above ] at (X1) {\footnotesize $Q_3=Q_4$};
	\end{scope}
	\end{tikzpicture}
	\caption{Embedded $\QQ$-resolution of  $D\colon x^4+y^{10}=0$ and $W\colon x^{6-1}=0$ in~$\CC^2$, and of their images in the quotient  $\CC^2\to X(2;1,1)$.}
	\label{fig:noPoleUpstairs}
	\end{figure}
\end{ex}

The main purpose of this section is to study the relations among the poles of topological zeta functions when $W=0$. Moreover, they also produce a useful relation at the level of the motivic zeta function, see Theorem~\ref{Thm:Comparison_Ztop}. For a better presentation, we split the proof into three different sections as follows. Let $\pi\colon X\to\CC^2$ be the minimal embedded resolution of $(\CC^2,D)$. We first construct in Section~\ref{sec:minQresInQuotients} an embedded $\QQ$-resolution $\ol{\pi} \colon \ol{X}\to \CC^2/G$ to obtain a commutative diagram as in~\eqref{eq:diagramQresS}. It turns out that this $\ol{\pi}$ verifies some nice properties that are crucial for our purpose. In particular, it verifies Loeser's $\alpha$-condition, that is,  $-1\leq \ol{\alpha}_Q< 1$ holds for 
any point $Q$ in the exceptional part of $\ol{\pi}$, see Proposition~\ref{Prop:AlphasAbs1}. In Section~\ref{Sec:StructureThm}, we prove some properties of a decorated graph of $\ol{\pi}$, that allow one to control the $\alpha$-values of the components contributing to poles of the zeta function. This is very useful in Section~\ref{subsec:CharPolesZtop}, when studying the contribution of an exceptional $\ol{E}$ to the residue of $s_0=-\ol{\nu}/\ol{N}$, defined by 
\[
\R^e:=\frac{1}{\ol{N}}\left(\chi(\ol{E}^\circ) + \sum_{i=1}^k\frac{1}{\ol{\alpha}_i}\right),
\]
which is the key point to end the proof of Theorem~\ref{Thm:Comparison_Ztop} in Section~\ref{subsec:ProofPolesZtop}.

	\subsection{Construction of \texorpdfstring{$\ol{\pi}$}{pi}}\label{sec:minQresInQuotients}
	Let $S,\ol{S}$ and $\rho$ be as in Setting~\ref{setting2}. Fix  a divisor $D\subset \CC^2$, which is invariant under the action of $\mu_{d}$, and consider the associated Weil divisor $\ol{D}\subset \CC^2/\mu_d$,  verifying $D = \rho^* \ol{D}$. 
	Let $\pi\colon X\to\CC^2$ be the minimal embedded resolution of $(D,0)$, and assume that $\pi$ is not the identity. The case $\pi=\id$ was treated in Section~\ref{sec:BadCases}, see Remark~\ref{rmk:BpNeverTangent}(i). 

The action of $\mu_d$ on $\CC^2$ can be extended to every blow-up as in Section~\ref{sec:embQres}, so $\mu_d$ acts linearly also on $X$. Taking $\wt{\rho}\colon X \twoheadrightarrow \ol{X}:=X/\mu_d$, the induced quotient map associated to such an extended action, there exists a proper birational morphism $\ol{\pi} \colon \ol{X}\to \CC^2/\mu_d$ making the following diagram commute.
	\begin{equation}\label{Eq:diagram_CC2G}
	  \begin{tikzcd}
		\CC^2 \arrow[d, twoheadrightarrow, "\rho"] 	& \arrow[l, "\pi" above] X \arrow[d, dashed, twoheadrightarrow, "\wt{\rho}"]\\
		\CC^2/\mu_d & \arrow[l, dashed, "\ol{\pi}" above] X/\mu_d=:\ol{X}
	\end{tikzcd}
	\end{equation}
	Our first task is to show that this $\ol{\pi}$
verifies an analogue of the classical result stated in Proposition~\ref{lemma:alphas1}. 
	
\begin{prop}\label{Prop:AlphasAbs1}
	The morphism $\ol{\pi}$ is an embedded $\QQ$-resolution of $(\ol{D},-B_\rho)$. Moreover, for any exceptional component $\ol{E}$ with numerical data $(\ol{N},\ol{\nu})$, and any point $Q_i\in \ol{E}\cap\P_{\ol{\pi}}$,
one has that $$-1\leq {\alpha}_{Q_i}<1,$$ 
	where $\P_{\ol{\pi}}$  and ${\alpha}_{Q_i}$ were introduced in~\eqref{Eq:P_pi} and Definition~\ref{Def:Alpha_Qres}, respectively.
	The equality holds if and only if there exists a unique component $\ol{E}_i$ such that $\ol{E}\cap\ol{E}_i=\ol{E}\cap\P_{\ol{\pi}}$. 
\end{prop}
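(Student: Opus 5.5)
The plan is to read everything off the commutative diagram~\eqref{Eq:diagram_CC2G} and transport Loeser's bound (Proposition~\ref{lemma:alphas1}) from $X$ down to $\ol{X}$ using Theorem~\ref{thm:MultAlphas_proportional}.

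\emph{Step 1: $\ol{\pi}$ is an embedded $\QQ$-resolution.} The extended $\mu_d$-action on $X$ is linear and diagonal in every blow-up chart, as in~\eqref{eq:chartC2G}. Every exceptional component is $\mu_d$-invariant, and every point of $\pi^{-1}(D)$ where two components meet lies on an exceptional component. Hence, by Remark~\ref{rmk:BpNeverTangent}(ii), only the non-pathological local situations of Section~\ref{subsec:non-pathological} occur. So $\ol{X}$ is a $V$-manifold with cyclic quotient singularities, and $\ol{\pi}^{-1}(\ol{M})$ has $\QQ$-normal crossings. Moreover, the strict transform of $B_\rho$ does not meet the strict transform of $\ol{D}$. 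Next, $W=\rho^*(-B_\rho)+\Ram_\rho=0$, so the diagram fits Theorem~\ref{thm:MultAlphas_proportional} with upstairs pair $(D,0)$ and downstairs pair $(\ol{D},-B_\rho)$. Upstairs the coordinate axes, i.e.\ the preimages of the components of $B_\rho$, carry numerical data $(0,1)$, exactly like curvettes, and so have $\alpha$-value $1$ on $X$.

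\emph{Step 2: the bound $-1\le\alpha_Q<1$.} Fix an exceptional $\ol{E}$ and an exceptional $E\subset X$ with $\wt\rho(E)=\ol{E}$. Take $Q\in\ol{E}\cap\P_{\ol\pi}$ and a preimage $P\in E$. By Theorem~\ref{thm:MultAlphas_proportional}(2), $\alpha_P=n\,\alpha_Q$ with $n\ge1$ the ramification index of $P$ along $E\to\ol{E}$. There are two cases.
\begin{itemize}
\item If $P$ lies on another component of $\pi^{-1}(D)$, then $\alpha_P\in[-1,1)$ by Proposition~\ref{lemma:alphas1}, since $X$ is smooth. Dividing by $n\ge1$ gives $\alpha_Q\in[-1,1)$: nonnegative values only decrease, and negative values move into $[-1/n,0)$.
\item Otherwise $P$ is a free point (or lies on an axis with data $(0,1)$), so $\alpha_P=1$ and $\alpha_Q=1/n$. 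Here $Q\in\P_{\ol\pi}$ is a singular point of $\ol{X}$ of some order $m\ge2$, or lies on the transform of $B_\rho$. In either sub-case one needs $n\ge2$. For a free singular point, comparing with Definition~\ref{Def:Alpha_Qres} gives $1/m=1/n$, hence $n=m\ge2$. For a point on the transform of $B_\rho$, one computes its $\alpha$-value directly from the data $(0,1-\tfrac1e)$ plus the chart order.
\end{itemize}
In all cases $\alpha_Q<1$.

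\emph{Step 3: the equality case.} Since $E$ is rational (minimal resolution of a plane curve) and $\ol{E}$ is its image, $\ol{E}$ is rational too. Lemma~\ref{Lemma:Arith-Rel} then gives $\sum_{Q\in\P_{\ol\pi}\cap\ol{E}}(\alpha_Q-1)=-2$, and by Step~2 every summand lies in $[-2,0)$. If some $\alpha_Q=-1$, its summand already equals $-2$, so no other point of $\P_{\ol\pi}$ lies on $\ol{E}$. Thus $\ol{E}\cap\P_{\ol\pi}=\{Q\}=\ol{E}\cap\ol{E}_i$ for a unique $\ol{E}_i$; note that $Q$ cannot be a free singular point, since those have $\alpha=1/m>0$. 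Conversely, if $\P_{\ol\pi}\cap\ol{E}$ is a single point, the same identity forces $\alpha_Q-1=-2$.

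The main obstacle I expect is Step~2 for points on the strict transform of $B_\rho$, and more generally keeping the order $m$ of $Q$ consistent with the ramification index $n$. This needs the explicit local models of Table~\ref{table:NcQnc}, to check that the $\alpha$-value of such a point is $1/n\le 1/2$ and not equal to $1$. I would first try to settle this by a direct chart computation, using~\eqref{eq:chartC2G} and Remark~\ref{rmk:AllAbelianAreCyclic}.
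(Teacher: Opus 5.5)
This is essentially the paper's own proof. The first part goes through Remark~\ref{rmk:BpNeverTangent}(ii). The bound at points lying on another exceptional or $\wh{\ol{D}}$-component comes from Proposition~\ref{lemma:alphas1} together with $\ol{\alpha}_Q=\alpha_P/n$ from Theorem~\ref{thm:MultAlphas_proportional}(2). Free singular points are handled by Definition~\ref{Def:Alpha_Qres}, and the equality case by the relation $\sum_Q(\alpha_Q-1)=-2$. Your Step~3 is in fact more explicit than the paper's one-line version.

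The sub-case you single out as the main obstacle is not an obstacle, but you have the wrong numerical data there.
\begin{itemize}
\item A component $\wh{\ol{L}}_k$ of the branch locus not lying in $\wh{\ol{D}}$ has $\nu=1+(\text{coefficient of }\ol{L}_k\text{ in }\ol{W}=-B_\rho)=1-(1-\tfrac{1}{e_k})=\tfrac{1}{e_k}$. So its data are $(0,1/e_k)$, not $(0,1-1/e_k)$.
\item This is also forced by Theorem~\ref{thm:MultAlphas_proportional}(1): upstairs $L_k$ has data $(0,1)$ and ramification index $e_k$.
\item Definition~\ref{Def:Alpha_Qres} then gives $\ol{\alpha}_Q=\frac{1}{e_k\,\ol{m}}$. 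Since $\ol{L}_k\subset\supp(B_\rho)$ forces $e_k\ge 2$, you get $\ol{\alpha}_Q\le\frac12<1$.
\end{itemize}
This is exactly the paper's case (ii). It also yields $n=e_k\ol{m}\ge 2$ directly, so no chart computation and no use of Table~\ref{table:NcQnc} is needed.

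A minor correction to Step~1: the exceptional components of $\pi$ need not be globally $\mu_d$-invariant. When the action permutes branches of $D$, curves created over points of a nontrivial orbit are permuted among themselves. What Remark~\ref{rmk:BpNeverTangent}(ii) actually uses is only invariance under the local stabilizers, and that does hold.
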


\begin{proof}
	According to Remark~\ref{rmk:BpNeverTangent}(ii), the first part is a consequence of the fact that $\supp (\pi^*D)\cup\supp (\pi^*\Ram_\rho)$ is a normal crossing divisor on $X$. Indeed, $\wt{\rho}$ is locally the quotient of a normal crossing divisor on $\CC^2$ by the action of $\mu_d$ and, in particular, irreducible components of the strict transforms of $D$ and $\Ram_\rho$ cannot meet on an exceptional divisor $E$, unless $D$ and $\Ram_\rho$ share common components. 
	Note that $\ol{E}=\PP^1/\mu_{d}\cong\PP^1$. 

 To show the second part of the statement, let $\ol{E}$ be an exceptional component  with numerical data $(\ol{N},\ol{\nu})$, and take $E$ in $X$ with numerical data $(N,\nu)$ projecting onto $\ol{E}$. Fix $Q_i\in\P_{\ol{\pi}}\cap\ol{E}$ of order $\ol{m}_i$. We distinguish three cases.
	\begin{enumerate}[label=(\roman*)]
		\item There exists another component $\ol{E}_i$, either exceptional or in $\wh{\ol{D}}$, intersecting  $\ol{E}$ at $Q_i$ with numerical data $(\ol{N}_i,\ol{\nu}_i)$. Take then $E_i$ on $X$ intersecting $E$, with numerical data $(N_i,\nu_i)$, which projects onto $\ol{E}_i$. Likewise, denote by $P_i$ the intersection point $E\cap E_i$. In order to simplify notation in the rest of the proof, we denote 
		$$\alpha_i:=\alpha_{P_i}\quad \text{and}\quad \ol{\alpha}_i:=\alpha_{Q_i}.$$ 
Here  $\alpha_{i}$ is the $\alpha$-value (associated to $D, W=0, \pi$) of $P_i$ with respect to $E_i$. Then, by Proposition~\ref{lemma:alphas1}, the value $\alpha_{_i}=\nu_i-(\nu/N)N_i$ verifies $-1\leq\alpha_{i}<1$, and, since $\ol{\alpha}_{i}=\alpha_{i}/n$ for some $n\geq 1$ by Theorem~\ref{thm:MultAlphas_proportional}(2), it follows that also $-1\leq \ol{\alpha}_{i}<1$.
		
		\item The point $Q_i$ belongs to a branch curve $\wh{L}_k$ in $\wh{B}_\rho$, that is not a component of $\wh{\ol{D}}$. Then $\ol{\alpha}_{i} = \frac{1}{e_k\cdot \ol{m}_i}$ is strictly smaller than $1$, since $\rho$ ramifies over the line $L_k$.
		
		\item The point $Q_i\in\Sing\ol{X}$ is of order $\ol{m}_i>1$, and we are not in the cases above. Take $P_i\in E$ projecting on $Q_i$; it is clear that $\alpha_i=1$, since $X$ is smooth, and then we have by Definition~\ref{Def:Alpha_Qres} that $\ol{\alpha}_{i}=1/\ol{m}_i<1$.
	\end{enumerate}
	Finally, suppose that $\ol{E}\cap\P_{\ol{\pi}} = \set{Q_1,\ldots, Q_k}$. By the equality $\sum_{i=1}^k \ol{\alpha}_i = k-2$ from~\eqref{Eq:AlphaArithm}, we have that $\ol{\alpha}_1=-1$ if and only if $k=1$.
\end{proof}

\begin{remark}\label{rmk:minres_also_works_forW}
  For $W\neq 0$, the same construction of diagram~\eqref{Eq:diagram_CC2G} works well when one fixes the minimal embedded resolution $\pi\colon X\to\CC^2$ of $(D,W)$. If $\pi\neq\id$, then $\ol{\pi}\colon\ol{X}\to\CC^2/\mu_d$ is an embedded $\QQ$-resolution  of $(\ol{D},\ol{W})$. 
  In particular, $\supp(\pi^*D)\cup\supp (\pi^*W)\cup\supp (\pi^*\Ram_\rho)$ is still a normal crossing divisor on $X$.
  However, in this more general situation, the related $\alpha$-values, in the source or the target, may not be bounded by 1, see Example~\ref{ex:noPoleDownstairs}.%
\end{remark}

In Section~\ref{Sec:StructureThm} we will study graphs of embedding $\QQ$-resolutions whose exceptional components satisfy the bound of Proposition~\ref{Prop:AlphasAbs1}. This motivates Definition~\ref{def:alpha_condition}, providing $\ol{\pi}$ as a useful example of this more general situation.
	
	\begin{defi}\label{def:alpha_condition}
		 Let $(S,o)$ and $(D,W)$ be as in Setting~\ref{setting1}.
	  We say that an embedded $\QQ$-resolution $h:X\to S$ of $\supp(D) \cup \supp(W)$, with irreducible components $E_i, i\in I,$ of $h^{-1}(\supp(D) \cup \supp(W))$, verifies the \emph{$\alpha$-condition} %
	  if the following holds:
	  \begin{enumerate}%
	    \item\label{Alphacond1} 
	    for any component $E$ with numerical data $(N,\nu)$, one has that $\nu>0$,
	
	    \item\label{Alphacond2} 
	    for any exceptional component $E$ with numerical data $(N,\nu)$, and any point $Q_i\in E\cap\P_{h}$ of order $m_i$ on $X$, one has that $\alpha_{Q_i}:=\frac{\nu_i-(\nu/N)N_i}{m_i}<1$.
	  \end{enumerate}
	\end{defi}
	
\subsection{\texorpdfstring{$\QQ$}{Q}-resolutions verifying the \texorpdfstring{$\alpha$}{a}-condition and Eisenbud-Neumann graphs} 
\label{Sec:StructureThm}
Let us consider $(S,o)$ and $(D,W)$ as in Setting~\ref{setting1}. Assume that $(D,W)$ admits an embedded $\QQ$-resolution  $h\colon X \to S$, 
verifying Definition~\ref{def:alpha_condition}.  Denote by  $\set{E_i}_{i\in I}$ the set of exceptional and strict transform components. In the following result, we collect some geometric properties of such embedded $\QQ$-resolutions verifying the $\alpha$-condition. 

\begin{prop}\label{prop:AlphasPropsEi}
	If $E$ is an exceptional component  of $h$ with numerical data $(N,\nu)$, and $E\cap\P_{h}=\set{Q_1,\ldots,Q_k}$, then
	\begin{enumerate}
		\item $E$ is rational;
		\item when $k\geq 2$ (resp.~$k=1$), we have that all $\alpha_i >-1$ (resp.~$\alpha_1=-1$);
		\item at most one $Q_i$ occurs with $\alpha_i<0$;
		\item if $k\geq 3$, then at most one $Q_i$ occurs with $\alpha_i\leq 0$.
		% 	    \item If $k=2$, then $\ol{\alpha}_1 = - \ol{\alpha}_2$.
	\end{enumerate}
\end{prop}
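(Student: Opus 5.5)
The whole argument rests on the arithmetic relation of Lemma~\ref{Lemma:Arith-Rel},
\[
\sum_{i=1}^{k}(\alpha_i-1)=2g-2,
\]
where $g$ is the genus of $E$, combined with the bound $\alpha_i<1$ from Definition~\ref{def:alpha_condition}\eqref{Alphacond2}. The plan is to first show that the $\alpha_i$ are bounded below by $-1$, and then read off each item from this identity.

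\emph{Lower bound.} Condition~\eqref{Alphacond1} gives $\nu_i>0$ and $\nu>0$. For a curvette point we have $\alpha_i=1/m_i>0$. Otherwise
\[
\alpha_i=\frac{\nu_i-(\nu/N)N_i}{m_i}>-\frac{\nu N_i}{N m_i}.
\]
I do not expect to bound this directly. Instead I will rewrite the identity as
\[
\alpha_j=2g-2+k-\sum_{i\neq j}\alpha_i,
\]
and use $\alpha_i<1$ for every $i\neq j$ to obtain $\alpha_j>2g-2+k-(k-1)=2g-1$. This gives a lower bound for each $\alpha_j$ with no need to analyse intersection data.

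\emph{Item (1).} Since every $\alpha_i<1$ (and $k\geq 1$ for an exceptional component, which meets something because $\pi^{-1}(o)$ is connected to the strict transform of $D\neq0$), the left-hand side $\sum(\alpha_i-1)$ is strictly negative. Hence $2g-2<0$, so $g=0$ and $E$ is rational.

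\emph{Item (2).} With $g=0$ the lower bound above reads $\alpha_j>-1$ whenever $k\geq 2$, because at least one other term $\alpha_i<1$ enters the sum. When $k=1$ the identity gives $\alpha_1-1=-2$ directly, so $\alpha_1=-1$.

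\emph{Item (3).} Suppose two indices, say $\alpha_1,\alpha_2<0$. Then
\[
\sum_{i=1}^{k}\alpha_i<\sum_{i\ge3}\alpha_i<k-2,
\]
because each remaining $\alpha_i<1$. This contradicts $\sum_i\alpha_i=k-2$. If $k=2$, the same strict inequality gives $\alpha_1+\alpha_2<0=k-2$, again a contradiction.

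\emph{Item (4).} For $k\geq3$, suppose $\alpha_1,\alpha_2\leq 0$. Then
\[
\sum_i\alpha_i\leq\sum_{i\geq3}\alpha_i<k-2,
\]
where the last inequality is strict because $k\geq3$ guarantees at least one remaining term with $\alpha_i<1$. This contradicts the identity.

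The only delicate point is making sure $k\geq1$, so that the sums are nonempty. I would justify this from connectedness of $h^{-1}(M)$, since $E$ must meet another component. The rest is bookkeeping with the relation from Lemma~\ref{Lemma:Arith-Rel}.
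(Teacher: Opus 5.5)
Your proposal is correct and follows the paper's own route. The paper combines the bound $\alpha_i<1$ from the $\alpha$-condition with the relation $\sum_{i=1}^k(\alpha_i-1)=2g-2$ of Lemma~\ref{Lemma:Arith-Rel}, and defers the bookkeeping to Veys' classical argument, whereas you carry that bookkeeping out explicitly; your observation that $k\geq 1$ (connectedness of the exceptional fibre together with $D\neq 0$) is needed for item (1) is a detail the paper leaves implicit.
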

\begin{proof}
	These relations follow from part~\eqref{Alphacond2} of the $\alpha$-condition, together with the arithmetic relation~\eqref{Eq:AlphaArithm}. For this it is enough to use the same arguments as in~\cite[Corollary~2.7]{Veys95} where the classical case was treated.
\end{proof}
In order to prove Theorem~\ref{Thm:Comparison_Ztop}, we will interpret the previous result in terms of a resolution graph associated to $h$. More precisely, one constructs an \lq extended weighted Eisenbud-Neumann  decorated graph\rq\ $\Gamma_h$ as 
follows. 
	\begin{itemize}[label=$-$]
	  \item Associate to each exceptional component a vertex ($\bullet$), and to each irreducible component of the strict transform  of $D$ or $W$ an arrow ($\rightarrow$).
	
	  \item Decorate each vertex or arrow representing a component $E_i$, $i\in I$, with the ratio $\nu_i/N_i$, where we define this quotient to be $\infty$ if $N_i=0$.
	
	  \item Each point $P_k\in\P_{h}$ corresponds to an edge in the graph as follows.
	  \begin{enumerate}
	    \item[(i)] If $P_k$ is the intersection of two components $E_i$ and $E_j$, then the edge connects the respective vertices (i.e., \raisebox{-.3em}{\tikz{\node (P1) at (-.5,0) {$\bullet$}; \node (P2) at (.5,0) {$\bullet$}; \draw (P1.center) -- (P2.center);}} or  \raisebox{-.3em}{\tikz{\node (P1) at (-.5,0) {$\bullet$}; \node (P2) at (.5,0) {}; \draw[->] (P1.center) -- (P2.center);}}). The arrow will be dashed when one of the components belongs to $\wh{W}\setminus\wh{D}$ (i.e., \raisebox{-.3em}{\tikz{\node (P1) at (-.5,0) {$\bullet$}; \node (P2) at (.5,0) {}; \draw[dashed, ->] (P1.center) -- (P2.center);}}).
	
	    \item[(ii)] If $P_k$ lies only on one component $E_i$, that is, $P_k$ is singular, a dashed edge is drawn towards a new open vertex decorated with $\infty$ (i.e., \raisebox{-.3em}{\tikz{\node (P1) at (-.5,0) {$\bullet$}; \node (P2) at (.5,0) {$\circ$}; \draw[dashed] (P1.center) -- (P2.center); \node[right] at (P2) {\small $\infty\!$};}}).
	  \end{enumerate}
	
	  \item Decorate each edge with the order $m_k$ of the corresponding point $P_k$.
	\end{itemize}
Note that $\Gamma_h$ is connected.
	
\begin{figure}[ht]
\begin{tikzpicture}[scale=1.2]
	\node[draw, circle, fill=black, inner sep=2pt] (C) at (0,0) {};
	\node[inner sep=0pt] (L1) at (-2,.8) {};
	\node[draw, circle, fill=black, inner sep=2pt] (L2) at (-2,-0.3) {};
	\draw[->] (C) -- (L1) node[sloped, midway, above] {\(m_1\)};
	\draw (C) -- (L2) node[sloped, midway, below] {\(m_1'\)};
	\node[inner sep=0pt] (R1) at (2,.8) {};
	\node[draw, circle, fill=white, inner sep=2pt] (R2) at (2,-.3) {};
	\draw[dashed,->] (C) -- (R1) node[sloped, midway, above] {\(m_1''\)};
	\draw[dashed] (C) -- (R2) node[sloped, midway, below] {\(m_1'''\)};
	\node at (-1.75,.4) {\LARGE $\vdots$};
	\node at (-1.75,-.5) {\LARGE $\vdots$};
	\node at (1.75,.4) {\LARGE $\vdots$};
	\node at (1.75,-.5) {\LARGE $\vdots$};
	\node[left] at (L1) {$\dfrac{\nu_1}{N_1}$}; \node[left] at (L2) {$\dfrac{\nu_1'}{N_1'}$}; 
	\node[right] at (R1) {$\infty$}; \node[right] at (R2) {$\infty$};
	\node[above,yshift=3] at (C) {$\dfrac{\nu}{N}$};
\end{tikzpicture}
\caption{Decorated subgraph centered at an exceptional vertex.}
\end{figure}
	
	\begin{remark}\mbox{}
	  \begin{enumerate}
		\item The convention in case (ii) above is motivated by the fact that we can picture a curvette passing through such a point $P_k$,  forming together with $E_i$ a $\QQ$-normal crossing divisor. 
		Recall also that a component in $\wh{W}\setminus\wh{D}$ has numerical data $(0, 1/e_k)$, while the curvette has data $(0,1)$. That is the reason to use $\infty$ as decoration.
	    \item  Assuming $h\neq\id$, each irreducible component of the strict transform of $D$ and $W$ intersects a unique exceptional component $E_i$, and then its  corresponding vertex is an  end vertex of the graph. 
	  \end{enumerate}
	\end{remark}
	
	Now fix an exceptional component $E$ with numerical data $(N,\nu)$, and consider another component $E_i$, exceptional or in $\wh{D}$, with numerical data $(N_i,\nu_i)$, that intersects $E$ in a point $P$. Then, according to Definition~\ref{Def:Alpha_Qres}, 
	 \[
	  \alpha_{P}>0\quad\iff\quad  \frac{\nu_i}{N_i}>\frac{\nu}{N}.
	  \]
	Otherwise, for an irreducible component $E_i$ in $\wh{W}\setminus \wh{D}$, one has that $\alpha_i = \frac{\nu_i}{m_i}\in(0,1)$, since $h$ verifies the $\alpha$-condition. This simple observation allows one to rephrase Proposition~\ref{prop:AlphasPropsEi} in terms of the graph $\Gamma_{h}$.
	
	\begin{prop}\label{prop:propertiesEdges}
	  Using the previous notation, fix an exceptional divisor $E$ with numerical data $(N,\nu)$, whose vertex ($\bullet$) has valency $k$ in $\Gamma_h$. Then we have the following.
		\begin{enumerate}
		\item There exists at most one vertex ($\bullet$) or arrow ($\to$) such that
		\[
			\frac{\nu}{N}\raisebox{-.3em}{\tikz{\node (P1) at (-.5,0) {$\bullet$}; \node (P2) at (.5,0) {$\bullet$}; \draw (P1.center) -- (P2.center);}}\frac{\nu_i}{N_i}
			\quad\text{or}\quad
			\frac{\nu}{N}\raisebox{-.3em}{\tikz{\node (P1) at (-.5,0) {$\bullet$}; \node (P2) at (.5,0) {}; \draw[->] (P1.center) -- (P2.center);}}\frac{\nu_i}{N_i}
			\qquad\text{with}\qquad \frac{\nu_i}{N_i}<\frac{\nu}{N}.
		\]
		\item If $k\geq 3$, there exists at most one vertex ($\bullet$) or ($\to$) as before with $\frac{\nu_i}{N_i}\leq \frac{\nu}{N}.$
		
		\item If $k=2$, then only one of the following can occur:
		\begin{enumerate}
			\item \raisebox{-.7em}{
			\begin{tikzpicture}
				\coordinate (P1) at (-1,0);
				\node (P2) at (0,0) {$\bullet$};
				\coordinate (P3) at (1,0);
				\draw[] (P1) -- (P2.center) node[above] {$\frac{\nu}{N}$} -- (P3);
				\node[draw, fill=white, minimum size=8pt, inner sep=0pt] at (P1) {\tiny 1};
				\node[draw, fill=white, minimum size=8pt, inner sep=0pt] at (P3) {\tiny 2};
				\node[left,xshift=-5] at (P1.west) {$\frac{\nu_1}{N_1}$};
				\node[right,xshift=5] at (P3.east) {$\frac{\nu_2}{N_2}$};
			\end{tikzpicture}
			},
			where each vertex $\scriptsize\boxed{i}$
			is either of type ($\bullet$) or $(\to)$, and
			\[
			\frac{\nu_1}{N_1}<\frac{\nu}{N} \iff \frac{\nu}{N} < \frac{\nu_2}{N_2},\quad\text{as well as}\quad\frac{\nu_1}{N_1}=\frac{\nu}{N} \iff \frac{\nu}{N} = \frac{\nu_2}{N_2}
			\]
			(i.e., ratios either increase or decrease through chains on $\Gamma_{h}$, otherwise they are all equal on the chain);
			
			\item \raisebox{-.7em}{
			\begin{tikzpicture}
				\coordinate (P1) at (-1,0);
				\node (P2) at (0,0) {$\bullet$};
				\coordinate (P3) at (1,0) {};
				\draw[] (P1) -- (P2.center) node[above] {$\frac{\nu}{N}$};
				\draw[dashed] (P2.center) -- (P3) node[right, xshift=2] {$\infty$};
				\node[draw, fill=white, minimum size=8pt, inner sep=0pt] at (P1) {\tiny 1};
				\node[draw, fill=white, minimum size=8pt, inner sep=0pt] at (P3) {\tiny 2};
				\node[left,xshift=-5] at (P1.west) {$\frac{\nu_1}{N_1}$};
			\end{tikzpicture}
			},
			where $\scriptsize\boxed{1}$ is either of type ($\bullet$) or $(\to)$ verifying $\frac{\nu_1}{N_1}<\frac{\nu}{N}$, and $\scriptsize\boxed{2}$ is either of type ($\circ$) or $(\to)$.
		\end{enumerate}
	  \end{enumerate}
	\end{prop}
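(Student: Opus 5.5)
The plan is to translate Proposition~\ref{prop:AlphasPropsEi} into the language of $\Gamma_h$ through the sign dictionary just established. For an edge from $E$ to a component $E_i$ that is exceptional or lies in $\wh{D}$, we have $\operatorname{sign}(\alpha_P)=\operatorname{sign}\bigl(\nu_i/N_i-\nu/N\bigr)$, because $m_i>0$ and $N_i>0$. For a dashed edge (towards $\wh{W}\setminus\wh{D}$ or an open vertex $\circ$), the $\alpha$-condition gives $\alpha_P\in(0,1)$, and we treat the decoration $\infty$ as larger than $\nu/N$. So every statement about the sign of $\alpha_P$ becomes a statement comparing the decoration at the other end of the edge with $\nu/N$, and the valency $k$ of the vertex of $E$ in $\Gamma_h$ equals $\#(E\cap\P_h)$.

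For (1), Proposition~\ref{prop:AlphasPropsEi}(3) says that at most one $Q_i$ has $\alpha_i<0$. Dashed edges have positive $\alpha$, so this negative point must correspond to a solid edge or solid arrow, which means $\nu_i/N_i<\nu/N$. For (2), Proposition~\ref{prop:AlphasPropsEi}(4) in the case $k\ge3$ gives at most one $\alpha_i\le0$, and the same translation yields at most one neighbour with $\nu_i/N_i\le\nu/N$.

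For (3), take $k=2$. By Proposition~\ref{prop:AlphasPropsEi}(1) the curve $E$ is rational, so \eqref{Eq:AlphaArithm} gives $(\alpha_1-1)+(\alpha_2-1)=-2$, that is, $\alpha_1=-\alpha_2$. Hence either both $\alpha$-values vanish or they have opposite signs. If both vanish, neither edge can be dashed, and we get $\nu_1/N_1=\nu/N=\nu_2/N_2$, which is case (a) with equalities. Otherwise, one value is negative, say $\alpha_1<0$, and that edge must be solid with $\nu_1/N_1<\nu/N$. The other value is then positive. If its edge is solid, we are in case (a) with $\nu/N<\nu_2/N_2$. If its edge is dashed, we are in case (b), where box $2$ is an open vertex or a dashed arrow. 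The equivalences stated in (a) follow because $\alpha_1<0\iff\alpha_2>0$ and $\alpha_1=0\iff\alpha_2=0$. Finally, (a) and (b) exclude each other: in (b) one edge is dashed, and a dashed edge forces $\alpha>0$, so the equal-ratio situation cannot happen there.

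I do not expect a substantial obstacle. The one point that needs care is the dashed-edge bookkeeping. For a component of $\wh{W}\setminus\wh{D}$ the value is $\alpha=\nu_i/m_i$ with $N_i=0$, and it is positive only because of part~\eqref{Alphacond1} of the $\alpha$-condition, $\nu_i>0$; for a singular point lying on no other component, $\alpha=1/m>0$ by Definition~\ref{Def:Alpha_Qres}. I would state these two facts explicitly at the start. I would also note that when $k=1$ the statement is vacuous apart from (1), and (1) is covered by the same argument.
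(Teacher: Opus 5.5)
Your proposal is correct and follows the paper's own route: the paper obtains this proposition by rephrasing Proposition~\ref{prop:AlphasPropsEi} through the dictionary $\alpha_P>0\iff \nu_i/N_i>\nu/N$ for solid edges, together with $\alpha_P\in(0,1)$ for dashed edges. Your write-up supplies the details the paper leaves implicit, namely the relation $\alpha_1=-\alpha_2$ from \eqref{Eq:AlphaArithm} when $k=2$, and the positivity of $\alpha$ on dashed edges, which comes from $\nu_i>0$ and from $\alpha=1/m$.
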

The main result of this section is the following.
	\begin{prop}\label{prop:orderedTree}
	  Using the previous notation we have the following.
	  \begin{enumerate}[label=(\alph*)]
	    \item The graph $\Gamma_{h}$ is a tree.
	
	    \item Let $\M$ be the maximal subgraph of $\Gamma_{h}$ formed by the vertices with decoration the minimum value ${\nu}/{N}=\min_{i\in I}\set{{\nu_i}/{N_i}}$, and the edges connecting those vertices.
		\begin{enumerate}[label=(\roman*)]
			\item The subgraph $\M$ is connected and it has one of the following forms (with $r\geq0$).
			
			\begin{minipage}[t]{.3\textwidth}
			(1) \raisebox{-2.5em}{\scalebox{0.7}{
				\begin{tikzpicture}
					\node[draw, circle, fill=black, inner sep=2pt] (C) at (0,0) {};
					\node[] (L1) at (-2,1.5) {};
					\node[] (L2) at (-2,0.5) {};
					\node[] (L4) at (-2,-1.5) {};
					\draw[thick] (C) -- (L1) node[sloped, midway, above] {};
					\draw[thick] (C) -- (L2) node[sloped, midway, below] {};
					\draw[thick] (C) -- (L4) node[sloped, midway, below] {};
					\draw[thick] (C) -- (2,0);	
					\node at (-1.75,-.5) {\LARGE $\vdots$};
					\node[above,yshift=3] at (C) {$\frac{\nu}{N}$};
				\end{tikzpicture}
				}}\\[2em]
			
			(3)~~\raisebox{-.5em}{\scalebox{0.7}{
				\begin{tikzpicture}
					\node[] (C) at (0,0) {};
					\draw[thick,<-] (C) -- (2,0) node[sloped, midway, above] {};;
					\node[left] at (C) {$\frac{\nu}{N}$};
				\end{tikzpicture}
				}}
			\end{minipage}
			\begin{minipage}[t]{.5\textwidth}
			(2) \raisebox{-2.5em}{\scalebox{0.7}{
					\begin{tikzpicture}
						\node[draw, circle, fill=black, inner sep=2pt] (C) at (0,0) {};
						\node[draw, circle, fill=black, inner sep=2pt] (C1) at (1.5,0) {};
						\node[draw, circle, fill=black, inner sep=2pt] (C2) at (3,0) {};
						\node[draw, circle, fill=black, inner sep=2pt] (C3) at (6,0) {};
						\node[draw, circle, fill=black, inner sep=2pt] (D) at (7.5,0) {};
						\node[] (L1) at (-1.5,1.5) {};
						\node[] (L2) at (-1.5,0.5) {};
						\node[] (L4) at (-1.5,-1.5) {};
						\draw[thick] (C) -- (L1) node[sloped, midway, above] {};
						\draw[thick] (C) -- (L2) node[sloped, midway, below] {};
						\draw[thick] (C) -- (L4) node[sloped, midway, below] {};
						\draw[thick] (C) -- (C1) node[below,yshift=-2] {$E_1$} node[above,yshift=3] {$\frac{\nu}{N}$} -- (C2) node[below,yshift=-2] {$E_2$} node[above,yshift=3] {$\frac{\nu}{N}$} -- (4,0);
						\node[] at (4.5,0) {$\cdots$};
						\draw[thick] (5,0) -- (C3) node[below,yshift=-2] {$E_r$} node[above,yshift=3] {$\frac{\nu}{N}$} -- (D) node[above,yshift=3] {$\frac{\nu}{N}$};
						\node[inner sep=0pt] (R1) at (9,1.5) {};
						\node[] (R2) at (9,0.5) {};
						\node[] (R4) at (9,-1.5) {};				
						\draw[thick] (D) -- (R1) node[sloped, midway, above] {};
						\draw[thick] (D) -- (R2) node[sloped, midway, below] {};
						\draw[thick] (D) -- (R4) node[sloped, midway, below] {};
						\node at (-1.25,-.5) {\LARGE $\vdots$};
						\node at (8.75,-.5) {\LARGE $\vdots$};
						\node[above,yshift=3] at (C) {$\frac{\nu}{N}$};
					\end{tikzpicture}
				}}\\[.3em]
			
			(4) \raisebox{-2.5em}{\scalebox{0.7}{
					\begin{tikzpicture}
						\node at (-1.5,0) {};
						\node[] (C) at (0,0) {};
						\node[draw, circle, fill=black, inner sep=2pt] (C1) at (1.5,0) {};
						\node[draw, circle, fill=black, inner sep=2pt] (C2) at (3,0) {};
						\node[draw, circle, fill=black, inner sep=2pt] (C3) at (6,0) {};
						\node[draw, circle, fill=black, inner sep=2pt] (D) at (7.5,0) {};				
						\draw[thick,<-] (C) -- (C1) node[sloped, midway, above] {};;
						\draw[thick] (C1) node[below,yshift=-2] {$E_1$} node[above,yshift=3] {$\frac{\nu}{N}$} -- (C2) node[below,yshift=-2] {$E_2$} node[above,yshift=3] {$\frac{\nu}{N}$} -- (4,0);
						\node[] at (4.5,0) {$\cdots$};
						\draw[thick] (5,0) -- (C3) node[below,yshift=-2] {$E_r$} node[above,yshift=3] {$\frac{\nu}{N}$} -- (D) node[above,yshift=3] {$\frac{\nu}{N}$};
						\node[inner sep=0pt] (R1) at (9,1.5) {};
						\node[] (R2) at (9,0.5) {};
						\node[] (R4) at (9,-1.5) {};
						\draw[thick] (D) -- (R1) node[sloped, midway, above] {};
						\draw[thick] (D) -- (R2) node[sloped, midway, below] {};
						\draw[thick] (D) -- (R4) node[sloped, midway, below] {};
						\node at (8.75,-.5) {\LARGE $\vdots$};
						\node[left] at (C) {$\frac{\nu}{N}$};
					\end{tikzpicture}
				}}
			\end{minipage}
			
			\medskip
			
			The ending segments appearing in cases (1), (2) and (4) denote at least two more edges.
	
			\item Starting from any end vertex of $\M$, the ratios $\frac{\nu_i}{N_i}$ strictly increase along any path of the tree away from $\M$.
		\end{enumerate}
	  \end{enumerate}
	\end{prop}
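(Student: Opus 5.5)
The plan is to prove (a) first, by a topological argument on the resolution, and then derive (b) from (a) using only the local rules of Proposition~\ref{prop:propertiesEdges}.

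\textbf{Part (a).} For $\Gamma_h$ to be a tree, the exceptional components must form a tree of rational curves. By Proposition~\ref{prop:AlphasPropsEi}(1) every exceptional component is rational. Arrows and open vertices $\circ$ are end vertices, so they cannot create cycles. It therefore suffices to show the dual graph of the exceptional locus of $h$ is a tree. Since $X$ is a $V$-manifold, we resolve its cyclic quotient points minimally by Hirzebruch--Jung chains, obtaining a smooth resolution $\tilde h$ of $(S,o)$. The dual graph of $\tilde h$ is obtained from that of $h$ by inserting chains, so cycles are preserved. A cycle would therefore yield either $b_1$ of the exceptional fibre positive or a non-tree structure.

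One way to rule this out is to argue directly from the $\alpha$-values. Suppose there is a cycle $E_1,\dots,E_r$ of exceptional vertices. Choose $E_j$ on the cycle with minimal $\nu/N$. Its two cycle-neighbours then have $\alpha\ge 0$. Going around the cycle, the ratios cannot strictly increase all the way: by Proposition~\ref{prop:propertiesEdges}(3a), along valency-$2$ vertices the ratios are monotone. At some vertex we then need two neighbours with ratio at most its own, contradicting Proposition~\ref{prop:propertiesEdges}(1)--(2). The exception is the case where all ratios on the cycle are equal, which gives $\alpha=0$ at two points. That case must be excluded via (2) and the arithmetic relation~\eqref{Eq:AlphaArithm}. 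I expect this equal-ratio case to be the main obstacle. There I would use \eqref{Eq:AlphaArithm}: $\sum(\alpha_P-1)=-2$ with two zeros forces the remaining $\alpha$'s to sum to $k-4$ while each is $<1$, which should force $k\le 3$ and valency-$2$ behaviour around the whole cycle. A cycle on which all vertices have valency $2$ is then the whole graph, contradicting connectivity with the arrows of $\wh D\neq\emptyset$.

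\textbf{Part (b)(ii).} Take any path starting at an end vertex of $\M$ and leaving $\M$. At the first vertex $v$ outside $\M$, the previous vertex $u$ has the strictly smaller (minimal) ratio. By Proposition~\ref{prop:propertiesEdges}(1), $u$ is the unique neighbour of $v$ with smaller ratio. I then induct along the path. At a vertex $v$ reached from a neighbour with strictly smaller ratio, Proposition~\ref{prop:propertiesEdges}(1)--(2) says that every other neighbour has ratio at least that of $v$, and strictly greater if the valency is at least $3$. If the valency is $2$, then (3a) gives strict increase, and (3b) gives an $\infty$ end. So the ratios strictly increase along the path.

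\textbf{Part (b)(i).} Connectedness of $\M$ follows from (a) and (b)(ii). If two vertices of $\M$ were joined by a path in the tree leaving $\M$, the ratios would have to increase away from both ends, producing a local maximum in the interior of the path. That is impossible, since the interior vertex would have two neighbours with smaller ratio. The shape of $\M$ then comes from local analysis. Inside $\M$, adjacent vertices have $\alpha=0$. By Proposition~\ref{prop:propertiesEdges}(2), a vertex of valency at least $3$ has at most one such neighbour. Hence $\M$ is a chain, and its interior vertices have valency $2$ in $\Gamma_h$. The endpoints are either vertices of valency at least $3$, which give the fans in pictures (1), (2) and (4), or an arrow, which gives (3) and (4). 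A valency-$1$ vertex in $\M$ has $\alpha=-1$ at its neighbour, which forces that neighbour's ratio to be smaller, contradicting minimality unless $\M$ is that single vertex; this too must be checked. Valency-$2$ endpoints are excluded by (3a), since the other neighbour would also have to be in $\M$.
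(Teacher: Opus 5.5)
Your part (a) is in substance the paper's argument. A cycle can only pass through exceptional vertices, a strict increase of $\nu/N$ between consecutive cycle vertices propagates all the way around by Proposition~\ref{prop:propertiesEdges}(1)--(3), and connectivity plus an arrow of $\wh{D}$ handles the rest. The paper arranges it so that the constant-ratio case never arises. It starts from a cycle vertex of valency $\geq 3$, which exists since otherwise the cycle would be all of $\Gamma_h$. Proposition~\ref{prop:propertiesEdges}(2) then immediately gives a cycle-neighbour with strictly larger ratio.

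Two corrections to your version of (a):
\begin{enumerate}
\item Drop the opening ``topological'' paragraph. It derives no contradiction, and dual graphs of resolutions of normal surface singularities can contain cycles (cusp singularities; cf.\ the cycles of rational curves in Theorem~\ref{Thm:RodVeysQres}). Only the $\alpha$-condition excludes them here.
\item Your arithmetic in the constant-ratio case is wrong. In $\sum_P(\alpha_P-1)=-2$ the two zeros already contribute $-2$. The remaining terms, each $\alpha_P-1<0$, must therefore sum to $0$, so there are none and $k=2$. Your ``remaining sum $k-4$'' and ``$k\le 3$'' would not give valency $2$. The correct conclusion is just Proposition~\ref{prop:propertiesEdges}(2) (equivalently Proposition~\ref{prop:AlphasPropsEi}(4)), which you can cite directly.
\end{enumerate}

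For (b) the paper gives no argument of its own. It only remarks that the proof of \cite[Theorem~3.3]{Veys95} is formal and carries over via Proposition~\ref{prop:propertiesEdges} once $\Gamma_h$ is a tree. Your propagation principle is a clean self-contained replacement: after one strict increase along a non-backtracking path, the ratio keeps strictly increasing, by Proposition~\ref{prop:propertiesEdges}(1), (2), (3)(a), (3)(b). Together with (a), it yields (b)(ii), the connectedness of $\M$, and the fact that $\M$ is a chain whose interior vertices have valency $2$.

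Two details in your shape analysis need fixing:
\begin{enumerate}
\item A valency-$1$ exceptional vertex never lies in $\M$. Its unique point has $\alpha=-1$, which by the $\alpha$-condition forces $N_j>0$ and $\nu_j/N_j<\nu/N$. This contradicts global minimality, with no exception when $\M$ is a single vertex.
\item More importantly, your enumeration of endpoints omits the case where \emph{both} ends of $\M$ are arrows. Then $\Gamma_h=\M$ is a chain (arrow, valency-$2$ exceptional vertices, arrow) with constant ratio, which is none of (1)--(4). This is not excluded by the $\alpha$-condition alone. Take $D=\{xy=0\}$ on $(\CC^2,0)$ and $h$ the blow-up of the origin: $E$ has data $(2,2)$, the arrows have data $(1,1)$, and both $\alpha$-values are $0<1$. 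In \cite{Veys95} this configuration is ruled out by minimality of the resolution. To close your enumeration you must either list it or add a minimality-type assumption.
\end{enumerate}
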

	
	\begin{remark}
		In cases (2) and (4), the vertices $E_1,\ldots, E_r$ could have valency greater than or equal to two. However, 
		in the picture the valency is two for simplicity.
	\end{remark}
	
	\begin{proof}
		For the first part we proceed by contradiction. Assume that $\Gamma_{h}$ contains a cycle of necessarily exceptional vertices $\C\colon v_0\to v_1\to \cdots\to v_n\to v_{n+1}=v_0$.
		Note that not all the vertices in $\C$ could have valency $2$ in $\Gamma_{h}$,  since $\Gamma_{h}$ must contain at least one arrow vertex coming from the strict transform of $D$, and  this would imply that the graph is not connected. %
		Assume that $v_0$ has valency $k\geq 3$ and associated ratio $\frac{\nu_0}{N_0}$. By Proposition~\ref{prop:propertiesEdges}(2), at least $k-1$ of its neighbors $w_j$ have associated ratio $\frac{\nu_j}{N_j}$, verifying $\frac{\nu_0}{N_0}<\frac{\nu_j}{N_j}$. Then, at least one of the vertices $v_{n},v_1\in\C$ should verify this condition, say  $\frac{\nu_0}{N_0}<\frac{\nu_1}{N_1}$. For this reason,   Proposition~\ref{prop:propertiesEdges}(2)(3) forces $v_2$ to have an associated ratio $\frac{\nu_2}{N_2}$ satisfying $\frac{\nu_1}{N_1}<\frac{\nu_2}{N_2}$,  independently of the valency of $v_1$. Repeatedly applying this argument, we obtain that the ratios $\frac{\nu_i}{N_i}$ strictly increase along $\C$, which leads to a contradiction.
		
	  To prove part (b), note that the arguments in~\cite[Theorem~3.3]{Veys95} are formal and can be adapted to this more general context using~Proposition~\ref{prop:propertiesEdges}, once we know that $\Gamma_h$ is a tree.
	\end{proof}

	\subsection{A geometric characterization of poles in the quotient}
	\label{subsec:CharPolesZtop}
	
		We briefly recall the construction of $\ol{\pi}$ in Section~\ref{sec:minQresInQuotients}. Let $S=\CC^2$, $\ol{S}=\CC^2/\mu_d$, and let $\rho\colon \CC^2 \twoheadrightarrow \CC^2/\mu_d$ be the natural covering, as in Setting~\ref{setting2}. Given a divisor $D\subset \CC^2$ which is invariant under the action of $\mu_{d}$, we take $\pi\colon X\to\CC^2$ as the minimal embedded resolution of $(D,0)$. If $\ol{D}\subset \CC^2/\mu_d$ is given by $D = \rho^* \ol{D}$, and $\pi$ is not the identity map, then $\ol{\pi} \colon \ol{X}\to \CC^2/\mu_d$ is a proper birational morphism  that makes the diagram~\eqref{Eq:diagram_CC2G} commutative. 

Fix  an exceptional irreducible component $\ol{E}_i$ in $\ol{X}$ with numerical data $(\ol{N}_i,\ol{\nu}_i)$, and set $s_0=-\ol{\nu}_i/\ol{N}_i$. Suppose that $\ol{E}_i\cap\P_{\ol{\pi}}$ consists of exactly $k$ distinct points $P_j$, of order $\ol{m}_j$ and with associated numerical data $(\ol{N}_j,\ol{\nu}_j)$. Recall from Definition~\ref{Def:Alpha_Qres} that for an irreducible component $\ol{E}_j$, $j\in I$, in $\wh{B}_\rho\setminus \wh{\ol{D}}$, and  $P=\ol{E}_j\cap \ol{E}_i$, we have that  its numerical data are of the form $(0,\nu_\ell)$. Likewise, to $P\in E_i$ singular on $X$ but belonging only to $\ol{E}_i$, we associate formally the numerical data $(0,1)$. In this context  the corresponding $\alpha$-values with respect to $E_i$ are
	\[\ol{\alpha}_{j}:={\alpha}_{P_j}=\frac{\ol{\nu}_j-(\ol{\nu}_i/\ol{N}_i)\ol{N}_j}{\ol{m}_j},
	\]
	and we assume moreover  that $\ol{\alpha}_{j}\neq 0$ for all $j\in\{1,\ldots,k\}$. From~\eqref{Eq:Residue}, the contribution of $\ol{E}_i$ to the residue of $s_0$ in  $\Ztopp{(\CC^2/\mu_d,[0])}(\ol{D},-B_\rho; s)$ is
	\[
	\R_i^e:=\frac{1}{N_i}\left(\chi(\ol{E}_i^\circ) + \sum_{j=1}^k\frac{1}{\ol{\alpha}_j}\right) = \frac{1}{N_i}\left(2-k + \sum_{j=1}^k\frac{1}{\ol{\alpha}_j}\right).
	\]
	Since all exceptional components of $\ol{\pi}$ are rational, the arithmetical formula~\eqref{Eq:AlphaArithm} says that $\sum_{j=1}^k \ol{\alpha}_j=k-2$.  Thus $\R_i^e=0$ when $k=1$ or $2$.
	
	Analogously, if $\ol{E}_i$ is an analytically irreducible component of the strict transform of $\ol{D}$, necessarily intersecting a unique exceptional component $\ol{E}_j$,  its contribution to the residue is
	\[
	\R_i^s:= \frac{1}{N_i\ol{\alpha}_j} .
	\]
	Proposition~\ref{prop:propertiesEdges} already provides an important consequence: any ratio at an arrow is either a local maximum or a global minimum in the graph $\Gamma_{\ol{\pi}}$. Moreover, if $s_0$  %=-\ol{\nu}/\ol{N}$
	is a candidate pole associated to exactly $r$ components of the strict transform $\ol{E}_1,\dots, \ol{E}_r$, then either $r=1$ and $\R_1^s>0$, or $\R_i^s<0$ for any $i=1,\ldots,r$. In particular, any candidate pole induced by a strict transform of $\ol{D}$ is actually a pole of $\Ztopp{(\CC^2/\mu_d,[0])}(\ol{D},-B_\rho; s)$ if no other (exceptional) component is contributing.
	
	\medskip
	
	Furthermore, considering the ordered tree structure of $\Gamma_{\ol{\pi}}$ and `mimicking' the same proofs as in~\cite[Theorems~4.2~and~4.3]{Veys95}, we obtain the following characterization of the poles of $\Ztopp{(\CC^2/\mu_d,[0])}(\ol{D},-B_\rho; s)$ in terms of the embedded $\QQ$-resolution $\ol{\pi}$, constructed in~\eqref{Eq:diagram_CC2G}. It is worth noticing that, when the action is trivial, we obtain precisely~\cite[Theorems 4.2~and~4.3]{Veys95}.
	
	\begin{theorem}\label{thm:VeysPolesZtop}
		Consider the embedded $\QQ$-resolution $\ol{\pi}$ of $(\CC^2/\mu_d,\ol{M})$, constructed in~\eqref{Eq:diagram_CC2G}. Fix $s_0\in\QQ_{<0}$. Then
		\begin{enumerate}[label=(\roman*)]
			\item $s_0$ is a pole of $\Ztopp{(\CC^2/\mu_d,[0])}(\ol{D},-B_\rho)$ if and only if $s_0=-\ol{\nu_i}/\ol{N_i}$ for some irreducible component $\ol{E}_i$ of the strict transform of $\ol{D}$, or for some exceptional curve $\ol{E}_i$ containing at least three points $P_j\in\P_{\ol{\pi}}$;
			
			\item $s_0$ is a pole of order $2$ if and only if there exists two intersecting components $\ol{E}_i$ and $\ol{E}_j$ with $s_0=-\ol{\nu_i}/\ol{N_i}=-\ol{\nu_j}/\ol{N_j}$, and in that case $s_0$ is the pole closest to the origin.
		\end{enumerate}
	\end{theorem}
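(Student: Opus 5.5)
We will follow the strategy of \cite[Theorems~4.2 and~4.3]{Veys95}, replacing the classical $\alpha$-bounds by Proposition~\ref{Prop:AlphasAbs1} and using the ordered tree structure of $\Gamma_{\ol{\pi}}$ from Proposition~\ref{prop:orderedTree}. All exceptional components are rational, and the $\alpha$-condition holds. Since no strict transform lies in $\wh{W}\setminus\wh{D}$ except branch curves with $\nu\in(0,1)$, the candidate poles are exactly the values $-\ol{\nu}_i/\ol{N}_i$ with $\ol{N}_i\neq 0$. By \eqref{Eqn:ZtopQres}, the residue of a candidate $s_0$ of order one is the sum of the contributions $\R_i^e$ and $\R_i^s$ over all components with ratio $-s_0$.

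\textbf{Part (ii).} We plan to prove this first. A candidate of order two requires two intersecting components with equal ratio. Conversely, if two such components meet, the coefficient of $(s-s_0)^{-2}$ is a sum of terms $\ol{m}_k/(\ol{N}_i\ol{N}_j)>0$ over the edges of $\M$, so it cannot cancel. Proposition~\ref{prop:orderedTree}(b) shows that equal adjacent ratios only occur in the minimal subgraph $\M$. Hence $s_0=-\min\nu/N$, and since all $\nu>0$ this is the pole closest to the origin.

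\textbf{Part (i), order-one candidates.} Let $s_0$ be a candidate whose components are pairwise disjoint. For each of them every $\ol{\alpha}_j\neq 0$, since otherwise an adjacent component would share the ratio. Components with $k=1$ or $k=2$ points in $\P_{\ol{\pi}}$ contribute zero, because $\sum\ol{\alpha}_j=k-2$. We then show that every contribution is nonzero and that all contributions have the same sign. For an exceptional $\ol{E}_i$ with $k\ge 3$, Proposition~\ref{prop:AlphasPropsEi} gives at most one $\ol{\alpha}_j\le 0$, and the others lie in $(0,1)$.
\begin{itemize}
\item If all $\ol{\alpha}_j>0$, then each $1/\ol{\alpha}_j>1$, so $2-k+\sum 1/\ol{\alpha}_j>2>0$.
\item If exactly one $\ol{\alpha}_1<0$, then $\ol{\alpha}_1=k-2-\sum_{j\ge2}\ol{\alpha}_j>-1$. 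A convexity estimate as in \cite[Lemma 4.1]{Veys95} then gives $\R_i^e<0$, using $\sum_{j\ge2}(1/\ol{\alpha}_j-1)$ against $1/\ol{\alpha}_1$.
\end{itemize}
So the sign of $\R_i^e$ is determined by whether the component is a global minimum of the ratio in $\Gamma_{\ol{\pi}}$ (all neighbours larger) or not. The strict transform contributions behave the same way, by the discussion preceding the theorem: if $r=1$ and $\R^s>0$ it is a global minimum, and otherwise every $\R^s<0$.

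\textbf{Ruling out cancellation.} This is the main obstacle. The tree structure in Proposition~\ref{prop:orderedTree} will do the work. A positive contribution occurs only when $-s_0$ is the global minimum and $\M$ is a single vertex of type~(1) or a single arrow of type~(3). In that situation no other component has ratio $-s_0$, since $\M$ is connected and exactly these vertices carry the minimum. Otherwise every contributing component is a local non-minimum whose contribution is strictly negative, so the contributions add up with a common sign and cannot cancel. The delicate point is the sign when $k\ge3$ and $\ol{\alpha}_1<0$: there we need the bounds $-1<\ol{\alpha}_1$ and $\ol{\alpha}_j<1$, which come from Proposition~\ref{Prop:AlphasAbs1}. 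In the quotient setting this is exactly where the extra points of type $1/\ol{m}$ and the branch curves with $\ol{\alpha}=1/(e\ol{m})$ must be checked to fit the estimate; they do, since their values lie in $(0,1)$.
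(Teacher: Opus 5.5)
Your proposal is correct and is essentially the paper's argument. The paper likewise transports the proofs of \cite[Theorems~4.2 and~4.3]{Veys95}, using the bounds of Proposition~\ref{Prop:AlphasAbs1} and the ordered tree of Proposition~\ref{prop:orderedTree}, together with the sign analysis of $\R^e_i$ and $\R^s_i$ that you describe. The one step you leave implicit is the ``only if'' of (i) for a pole of order two; it follows at once from the shapes (2) and (4) of $\M$, since an end of $\M$ is either an arrow or a vertex carrying at least three points of $\P_{\ol{\pi}}$.
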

	
	\begin{remark}
		The assertions in this theorem still hold for any pair $(D,W)$ on a normal surface singularity $(S,o)$ as in Setting~\ref{setting1}, provided that it admits an embedded $\QQ$-resolution $h$ verifying the $\alpha$-condition, since this is a direct consequence of Proposition~\ref{prop:orderedTree} together with some formal arguments.
	\end{remark}

	\subsection{Proof of Theorem~\ref{Thm:Comparison_Ztop}}
	\label{subsec:ProofPolesZtop}
	
	Now we are ready to conclude with a proof of Theorem~\ref{Thm:Comparison_Ztop}. We first treat the general case, being $\pi\neq\id$. 
	Similarly as in the proof of Theorem~\ref{Thm:Comparison_Zmot}, we easily see that  $s_0\in\QQ_{<0}$ is a pole of order $2$ of $\Ztopp{(\CC^2,0)}(D,0;s)$ if and only if it is a pole of order $2$ of $\Ztopp{(\CC^2/\mu_d,[0])}(\ol{D},-B_\rho; s)$.
	
	Suppose now that $s_0$ is a pole of order 1 of $\Ztopp{(\CC^2/\mu_d,[0])}(\ol{D},-B_\rho; s)$. By Theorem~\ref{thm:VeysPolesZtop}, we have that $s_0=-\ol{\nu}/\ol{N}$ for some irreducible component $\ol{E}$ of the strict transform of $\ol{D}$, or for some exceptional curve $\ol{E}$ containing at least three points $Q_i\in\P_{\ol{\pi}}$ (and thus with $\ol{\alpha}_i<1$). In the first case, $s_0$ is also induced by some irreducible component of the strict transform of $D$, and hence it is a pole of $\Ztopp{(\CC^2,0)}(D,0;s)$ by~\cite[Theorem~4.3]{Veys95}.
	So assume that $\ol{E}$ is exceptional and contains at least three points $Q_i\in\P_{\ol{\pi}}$. Take $E$ exceptional in $\pi$, projecting on $\ol{E}$. By Theorem~\ref{thm:AtMostTwoAlphas}, we have that $\#\set{P_j\in E \mid \alpha_j\neq1}\geq 3$, and more concretely there exist at least three points $P_j\in E$ such that $\alpha_j<1$ (since $\pi$ is the minimal embedded resolution). Thus, $s_0$ is a pole of $\Ztopp{(\CC^2,0)}(D,0;s)$, again by~\cite[Theorem~4.3]{Veys95}.
	
	Assume on the other hand that $s_0=-\nu/N$ is a pole of order 1 of $\Ztopp{(\CC^2,0)}(D,0;s)$. The strict transform case is as above; so assume that there exists an exceptional  $E$ with numerical data $(N,\nu)$, containing  at least three points $P_j$ with $\alpha_i\neq1$. Once again, this implies that $\alpha_i<1$ by construction. In particular, we are never in the geometric case described in Proposition~\ref{prop:losingPoles}(3)(b). Thus the cardinality of $\set{Q_i\in E\mid \ol{\alpha}_i\neq1}$ is at least~$3$, and we conclude that $s_0$ is a pole of $\Ztopp{(\CC^2/\mu_d,[0])}(\ol{D},-B_\rho; s)$ by Theorem~\ref{thm:VeysPolesZtop}. 
	This proves equality~\eqref{eq:equality-top}. 
	
	Now, by the geometrical characterization of poles of motivic zeta functions for embedded $\QQ$-resolutions, see Theorem~\ref{Thm:RodVeysQres}, we have that any $s_0$ is a simple pole of the zeta function $\Zmott{(\CC^2/\mu_d,[0])}(\ol{D},-B_\rho; s)$ (resp.~$\Zmott{(\CC^2,0)}(D,0;s)$) if and only if it comes from either the strict transform of $D$ or a rupture component of $\ol{\pi}$ (resp.~$\pi$). By Theorem~\ref{thm:VeysPolesZtop}, this is equivalent to $s_0$ being a pole of $\Ztopp{(\CC^2/\mu_d,[0])}(\ol{D},-B_\rho; s)$ (resp.~$\Ztopp{(\CC^2,0)}(D,0;s)$). %	
	This shows that, for $S = \CC^2/\mu_d$, $W = -B_\rho$ and $\rho: \CC^2\to\CC^2/\mu_d$ the natural covering, expression~\eqref{eq:top-mot} is in fact an equality.
	
	Finally, one needs to deal with the situation when $\pi=\id$. There are finitely many such cases; they were already discussed in Section~\ref{sec:BadCases}, where the associated topological zeta functions are explicitly computed, see Remark~\ref{rmk:BpNeverTangent}(i). This completes the proof of Theorem~\ref{Thm:Comparison_Ztop}.
	
	\medskip
	
	Summarizing, we have constructed a `nice' particular embedded $\QQ$-resolution of $\CC^2/\mu_d$, sharing several good properties with minimal embedded resolutions. However, as in the usual case, all these properties are also verified for the \emph{minimal} embedded $\QQ$-resolution.
	
	\begin{cor}\label{cor:minEmbQres}
		Let $h\colon Z\to \CC^2/\mu_d$ be the minimal embedded $\QQ$-resolution of $\supp(\ol{D})\cup\supp(B_\rho)$. Then, both Propositions~\ref{Prop:AlphasAbs1} and~\ref{prop:orderedTree}, as well as Theorem~\ref{thm:VeysPolesZtop}, hold for $h$.
	\end{cor}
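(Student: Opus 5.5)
The plan is to reduce the statement to the properties already established for the particular resolution $\ol{\pi}$ of~\eqref{Eq:diagram_CC2G}, using the fact that $h$ is minimal and hence factors through it. By definition, the minimal embedded $\QQ$-resolution $h\colon Z\to\CC^2/\mu_d$ does not factor through any other embedded $\QQ$-resolution. Since $\ol{\pi}$ is an embedded $\QQ$-resolution of $\supp(\ol{D})\cup\supp(B_\rho)$ (Proposition~\ref{Prop:AlphasAbs1}), I will first argue that $\ol{\pi}$ factors as $\ol{\pi}=h\circ\sigma$, where $\sigma\colon\ol{X}\to Z$ is a composition of contractions of exceptional curves of $\ol{\pi}$. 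Concretely, $\sigma$ is obtained by successively contracting rational exceptional components $\ol{E}$ which, in $\ol{X}$ or in an intermediate surface, meet $\P$ in at most two points. Such a contraction yields again a cyclic quotient singularity, or a smooth point, with $\QQ$-normal crossings; this is the $\QQ$-analogue of Castelnuovo contraction, justified via weighted blow-ups as in Section~\ref{sec:embQres}.

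The key step is to understand how the $\alpha$-values on a surviving exceptional component $E$ of $h$ change under $\sigma$. I will compute them with the arithmetic relation of Lemma~\ref{Lemma:Arith-Rel}, together with the observation that numerical data of the surviving components are unchanged, since they are discrepancies and multiplicities along the same valuation. Suppose a chain of contracted curves attached to $E$ at a point $Q$ is collapsed to a point $P$ of $Z$. I would show that $\alpha_P^{Z}$ equals the $\alpha$-value at $Q$ of the first curve of the chain, so it is unchanged. The reason is that $\alpha_P$ is determined by the local divisor $K-\frac{\nu}{N}\pi^*D$ restricted to $E$, as in the proof of Theorem~\ref{thm:MultAlphas_proportional}(2). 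That restriction is intrinsic and computed through the different, which is insensitive to the contraction away from $E$. Consequently $-1\le\alpha_P<1$ persists, and the equality characterization follows from~\eqref{Eq:AlphaArithm} exactly as before. This gives Proposition~\ref{Prop:AlphasAbs1} for $h$.

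With the bound $\alpha_P<1$ in hand, together with $\nu>0$ (log discrepancies on the quotient being positive), $h$ satisfies the $\alpha$-condition of Definition~\ref{def:alpha_condition}. Then Proposition~\ref{prop:orderedTree} applies directly, since its proof only used the $\alpha$-condition. Theorem~\ref{thm:VeysPolesZtop} also follows for $h$, by the remark after it that the assertions hold for any embedded $\QQ$-resolution satisfying the $\alpha$-condition. As an alternative check for the pole statement, note that the zeta function is independent of the resolution. Moreover, the contracted curves have at most two points of $\P$ and so contribute zero residue, which shows that the pole sets computed from $h$ and from $\ol{\pi}$ coincide.

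I expect the main obstacle to be the first step. One must justify that the minimal $\QQ$-resolution is dominated by $\ol{\pi}$, and that each contraction preserves the $V$-manifold structure and $\QQ$-normal crossings. I would handle this by observing that every curve removed by minimality is a rational curve meeting the rest of the configuration in at most two points. Its neighbourhood is therefore toric, so it is a weighted blow-up of a cyclic quotient point, and contracting it gives a cyclic quotient singularity.
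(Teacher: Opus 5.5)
Your proposal is correct and follows the paper's route. You factor $\ol{\pi}$ through $h$ by contracting chains of rational exceptional curves. You then observe that each $\alpha$-value of $h$ equals the $\alpha$-value in $\ol{\pi}$ at the point where the contracted chain meets the surviving component, so the $\alpha$-values of $h$ are a subset of those of $\ol{\pi}$ and the $\alpha$-condition holds. Finally you invoke the general versions of Proposition~\ref{prop:orderedTree} and Theorem~\ref{thm:VeysPolesZtop}.

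The only difference is how the invariance of the $\alpha$-values is justified. The paper uses the explicit chain formula of \cite[Lemma~2.4]{Veys97}, as in Section~\ref{Sec:ProofProp}. You instead identify $1-\alpha_P$ with the coefficient at $P$ of the different of the crepant boundary on $E$, which is unchanged under the crepant contraction $\sigma$. This is an equally valid and somewhat more intrinsic argument.
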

	
	\begin{proof}
		The key point is to verify Proposition~\ref{Prop:AlphasAbs1} in this context. The rest of the arguments are the same. By minimality, the embedded $\QQ$-resolution $\ol{\pi}$ constructed in~\eqref{Eq:diagram_CC2G} factorizes through $h$ via a sequence of contractions of chains between vertices, corresponding to rational exceptional components. In fact, $\Gamma_h$ and also its minimal subgraph $\M$ has only chains of length 1. As in the proof of Theorem~\ref{Thm:RodVeysQres}, see Section~\ref{Sec:ProofProp}, if $P\in Z$ is a point on an exceptional component $\ol{E}_j$ of $h$ and $P$ appears after the contraction of a chain of rational curves $\ol{F}_1,\ldots,\ol{F}_r$ from $X/G$, then $\ol{F}_r$ has associated value $\alpha_{P}=\alpha_r$ with respect to $\ol{E}_j$ in $\ol{\pi}$. Thus, Proposition~\ref{Prop:AlphasAbs1} applies since the set of $\alpha$-values associated to $h$ is a subset of those of $\ol{\pi}$. In particular, $h$ verifies the $\alpha$-condition.
	\end{proof}

\section{Proof of Theorem~\ref{Thm:dZtop}}\label{Sec:Proportionality}

As both Examples~\ref{ex:noPoleDownstairs} and~\ref{ex:noPoleUpstairs} have shown, we cannot expect to fully determine topological zeta functions in quotients $\CC^2/\mu_d$ from those of $\CC^2$ by a simple expression as in~\eqref{eq:ztop-equality}. 
Nevertheless, Theorem~\ref{Thm:dZtop} shows that the only cases where we `lose control of the situation' are exactly those where the action permutes different branches of the curve, see Example~\ref{ex:noQNC}.

In this context, we complete the study of arithmetic relations between compatible $\QQ$-resolutions given in Section~\ref{ssec:Numerical_Relations}, by numerically relating orders of singular points over exceptional components in terms of the degree of the action and the ramification indices.

\begin{prop}\label{prop:relation_dme}
	Let $S=\CC^2$, $\ol{S}=\CC^2/\mu_d$,  and let $\rho\colon \CC^2 \twoheadrightarrow \CC^2/\mu_d$ be the natural covering, as in Setting~\ref{setting2}. We assume further the hypothesis and use the notation of Theorem~\ref{thm:MultAlphas_proportional}. Let $\ol{E}_1$ and $\ol{E}_2$ be two irreducible components of $\ol{\pi}^{-1}(\ol{M})$ in $\ol{X}$, intersecting transversely in a unique point $Q$, of order $\ol{m}$. Let $\wt{\rho}^{-1}(Q)=\{P_1,\dots,P_r\}$; these points all have the same order, say $m$, since they all belong to the same orbit. Also, each $P_k$ is the intersection of a curve $E_i^{(1)}$ with a curve  $E_j^{(2)}$, where $\wt{\rho}(E_i^{(1)})=\ol{E}_1$ and $\wt{\rho}(E_j^{(2)})=\ol{E}_2$.  
Again for orbit reasons, all $E_i^{(1)}$ (resp.~all $E_j^{(2)}$) have the same ramification index, say $e_1$ (resp.~$e_2$). 
 Then
\begin{equation}\label{formula:relation_dme}
	d m = re_1e_2\ol{m}.
\end{equation}
\end{prop}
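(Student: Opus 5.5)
The plan is to compute the local degree of $\wt{\rho}$ over $Q$ in two ways. Globally, $\deg\wt{\rho}=d$ by Theorem~\ref{thm:MultAlphas_proportional}, and since $\mu_d$ acts transitively on the fibre $\wt{\rho}^{-1}(Q)=\{P_1,\dots,P_r\}$, each point has stabilizer $H\subset\mu_d$ of order $d/r$. Thus the germ $\wt{\rho}\colon (X,P_k)\to(\ol{X},Q)$ is the quotient of $(X,P_k)$ by $H$, and has degree $d/r$. So the identity to prove, $d/r = e_1e_2\,\ol{m}/m$, is a purely local statement about a finite map of germs between cyclic quotient singularities that sends the pair of branches $E_i^{(1)},E_j^{(2)}$ onto the two axes $\ol{E}_1,\ol{E}_2$.

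Next I pass to orbifold charts. Write $(X,P_k)\cong \CC^2/\mu_m$ and $(\ol{X},Q)\cong\CC^2/\mu_{\ol{m}}$ with small diagonal actions, and choose coordinates in which the branches through $P_k$ and through $Q$ are the coordinate axes. This is possible because $\QQ$-normal crossing is preserved, as in Section~\ref{sec:BadCases} and Remark~\ref{rmk:BpNeverTangent}. The composite $\CC^2\to \CC^2/\mu_m\to \CC^2/\mu_{\ol{m}}$ lifts, by the universal cover property of $\CC^2\setminus 0$ over the smooth part, to a map of smooth germs $\CC^2\to\CC^2$ that respects the axes. Up to equivariant coordinate change this lift has the form $(u,v)\mapsto(u^{a},v^{b})$, exactly as in the local model~\eqref{Eq:localZ}.

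I then count degrees along the tower. The lift has degree $ab$, and the total map $\CC^2\to(\ol{X},Q)$ has degree $\ol{m}\,ab$. The same total map also factors as $\CC^2\to(X,P_k)\to(\ol{X},Q)$, so its degree equals $m\cdot(d/r)$. Hence $m d/r=\ol{m}\,ab$, and it remains to show $ab=e_1e_2$. To see this, compare the pullback of $\ol{E}_1=\{x=0\}$ in the two factorizations. Through the chart map of $\ol{X}$, its preimage is the reduced axis $\{u'=0\}$, because the action is small and hence there is no ramification in codimension one. Pulling back further by the lift gives $a\{u=0\}$. Through the other factorization, $\wt{\rho}^*\ol{E}_1=e_1E_i^{(1)}$ near $P_k$, and the chart map of $X$ is unramified along $E_i^{(1)}$, again because that action is small. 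So $a=e_1$, and in the same way $b=e_2$. Substituting gives $dm=re_1e_2\ol{m}$, which is formula~\eqref{formula:relation_dme}.

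The main obstacle is justifying the lift. The map $\CC^2\to(\ol{X},Q)$ is unramified only off the two axes, so the lifting argument must be run through $\pi_1$ of the complement of the axes in the link rather than $\pi_1$ of the punctured ball. I would instead argue on fundamental groups of $(\CC^*)^2$-type complements: the local covers correspond to finite-index subgroups of $\ZZ^2$. The $\mu_m$ and $\mu_{\ol{m}}$ quotients then correspond to sublattices, and ramification along each axis corresponds to the index in the relevant coordinate direction. In that language both the degree identity and the equality $ab=e_1e_2$ reduce to indices of lattices, which sidesteps the analytic lifting altogether.
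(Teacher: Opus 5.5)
Your argument is correct, but it takes a different route from the paper. The paper proves the formula with one line of intersection theory on $V$-surfaces. By the projection formula, $\frac{1}{\ol m}=\ol E_1\cdot\ol E_2=\frac{1}{d}\,(\wt\rho^*\ol E_1)\cdot(\wt\rho^*\ol E_2)=\frac{1}{d}\sum_{k=1}^{r}\frac{e_1e_2}{m}$. This uses that $\ol E_1\cap\ol E_2=\{Q\}$, and that two $\QQ$-normal crossing branches through a point of order $m$ meet with multiplicity $1/m$.

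You instead work at a single $P_k$. The germ of $\wt\rho$ there has degree $d/r$, the order of the stabilizer. You compare the two factorizations of $\CC^2\to(\ol X,Q)$ through the orbifold charts, reading off $a=e_1$ and $b=e_2$ from functoriality of Weil-divisor pullback, and this gives $m\cdot d/r=\ol m\,e_1e_2$.

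Your argument is in effect a hands-on proof of the local projection formula; the $1/m$ rule itself comes from exactly such a chart computation. What it buys is that it is purely local and never needs $Q$ to be the only point of $\ol E_1\cap\ol E_2$. The paper's version buys brevity once the intersection calculus on $V$-surfaces is available.

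Your closing worry about the lift is unfounded, and you do not need the detour through $\pi_1$ of the axis complement. The chart map $\CC^2\setminus 0\to(\CC^2/\mu_{\ol m})\setminus[0]$ is an honest covering, because the action is small and hence free off the origin. A punctured ball is simply connected, so the composite lifts on it directly. Ramification of the composite along the axes plays no role in the lifting criterion, and the lift $\phi$ extends over $0$ by Riemann extension.

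The normal form also needs no equivariance. By $\QQ$-normal crossing at $P_k$, only one branch over $\ol E_1$ passes through $P_k$. Hence $\phi^*u'$ is $u^a$ times a unit and $\phi^*v'$ is $v^b$ times a unit. Absorbing these units into the coordinates gives $\phi=(u^a,v^b)$, so $\deg\phi=ab$.
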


\begin{proof}
Using intersection theory (see e.g.~\cite{Ful98}), one has
	\begin{align*}
		\frac{1}{\ol{m}} &= \ol{E}_1\cdot\ol{E}_2
 = \frac{1}{d}\left(\wt{\rho}^*\ol{E}_1\right)\cdot \left(\wt{\rho}^*\ol{E}_2\right) 
= \frac{1}{d}\sum_{k=1}^r e_1e_2 \frac 1m
		= \frac{1}{d}\cdot\frac{re_1e_2}{m},
	\end{align*}
	and the result holds.
\end{proof}

\begin{remark}
The result of Proposition \ref{prop:relation_dme} is still valid when $\ol{E}_1$ is an exceptional component in $\ol{\pi}^{-1}(\ol{M})$ and $\ol{E}_2$ is a \lq curvette\rq\ through $Q$, meaning that the  $E_j^{(2)}$ are curvettes through the points $P_k$.
\end{remark}

We are ready to provide a proof of Theorem~\ref{Thm:dZtop}.

\medskip

  Let $\pi\colon X\to\CC^2$ be the {minimal} embedded resolution of $M=\supp(D)\cup\supp(W)\cup\supp(\Ram_\rho)$ on $(\CC^2,0)$. 
  First, if $M$ is normal crossing in $\CC^2$, then the formula holds by the discussion in Section~\ref{sec:BadCases}, and so we can assume that $\pi\neq\id$. 
  By Remark~\ref{rmk:minres_also_works_forW}, we obtain again the same commutative diagram as in~\eqref{Eq:diagram_CC2G}, where now $\ol{\pi}\colon\ol{X}\to\CC^2/\mu_d$ is an embedded $\QQ$-resolution of $\ol{M}=\supp(\ol{D})\cup\supp(\ol{W})\cup\supp(B_\rho)$. 

  The resolution $\pi$ is obtained by a finite composition $\pi=\pi_n\circ\cdots\circ\pi_1$ of blow-ups, where $\pi_i\colon X_i\to X_{i-1}$ is the blow-up of a point $P_{i-1}\in X_{i-1}$, starting with $X_{0}=\CC^2$ and $P_0=0$, and finishing with $X=X_n$. At each step $\pi_i$, we can assume by minimality that $P_{i-1}$ belongs to one of the (analytically irreducible) components of the strict transform $\wh{M}$ in $X_{i-1}$, producing an exceptional component $E_i=\pi^{-1}_{i}(P_{i-1})\subset X_i$. Note that $P_0=0$ is a $\mu_d$-fixed point of $\CC^2$, and thus $E_1=\pi^{-1}_{1}(P_{0})$ is $\mu_d$-invariant in $X_1$ when extending the action via $\pi_1$. 
  By hypothesis, each analytically irreducible component of $M$ is $\mu_d$-invariant, and then any point in $\wh{M}\cap E_1$ is a $\mu_d$-fixed point of $X_1$. A simple induction shows that any $E_i$ is $\mu_d$-invariant in each $X_j$,  $j\geq i$. Moreover, we will use the fact that $\wt{\rho}^*\ol{E}_i=e_iE_i$, for some $e_i\geq 1$ dividing $d$. 

  In this way, we have bijections between families of components in $X$ and in $\ol{X}$, specifically:
  \begin{itemize}
    \item between irreducible components of the strict transforms of $D$ and $\ol{D}$ (resp.~$W$ and $\ol{W}$) by hypothesis, and also between those of $\Ram_\rho$ and $B_\rho$,
    \item between exceptional components of $\pi$ and $\ol{\pi}$.
  \end{itemize}
Let $\{E_i\}_{i\in I}$ (resp.~$\{\ol{E}_i\}_{i\in I'}$) be the irreducible components
of $\pi^{-1}(M)$ (resp.~$\ol{\pi}^{-1}(\ol{M})$); we can assume that $I=I'$ and $I_e=I'_e$. Moreover, we will order the components in such a way that $\wt{\rho}(E_i)=\ol{E}_i$. Denote by $(N_i,\nu_i)$ (resp.~$(\ol{N}_i,\ol{\nu}_i)$) the {numerical data} of $E_i$ (resp.~$\ol{E}_i$), for $i\in I$. 

In fact, there are only two types of exceptional components $\ol{E}_i$:
  \begin{enumerate}[label=(T\arabic*)]
    \item\label{proofInvariance_case1} either $e_i=d$, and then $\wt{\rho}|_{E_i}:E_i\to \ol{E}_i$ is a $1:1$ map,
    
    \item\label{proofInvariance_case2} or $e_i\neq d$, and then, by Proposition~\ref{prop:losingPoles}, $\ol{E}_i$ contains exactly two total ramification points $Q_1,Q_2$ of the $\frac{d}{e_i} : 1$ map $\wt{\rho}|_{E_i}$. Also, any other intersecting component $\ol{E}_j$ will intersect either in $Q_1$ or $Q_2$ (otherwise $\ol{E}_j$ would have at least two preimages by $\wt{\rho}$, which leads to a contradiction).
  \end{enumerate}
The singular points of $\ol{X}$ can be described in terms of the components of the second type; we claim the following.
\begin{enumerate}
  \item {\em  Any singular point of $\ol{X}$ belongs to an exceptional $\ol{E}_i$ of type \ref{proofInvariance_case2} and it is one of the two ramification points.}
  
  \item {\em  Any ramification point of a curve of type \ref{proofInvariance_case2} not belonging to any other $\ol{E}_j,j\in I$,  is a singular point of $\ol{X}$.}
\end{enumerate}

Indeed, take $Q\in \Sing \ol{X}$ of order $\ol{m} >1$.  If $Q$ belongs to some $\ol{E}_i$ of type \ref{proofInvariance_case1}, then we would obtain in (\ref{formula:relation_dme}) (with $\ol{E}_i=\ol{E}_1$), that $d=1\cdot d\cdot e_2\cdot \ol{m}$, contradicting that $\ol{m} >1$.  So $Q$ should belong to a component $\ol{E}_i$ of type \ref{proofInvariance_case2}.  If $Q$ is not one of the two  ramification points, then $\wt{\rho}^{-1}(Q)$ consists of $\frac{d}{e_i}$ points, and (\ref{formula:relation_dme}) (with $\ol{E}_i=\ol{E}_1$) says that $d=\frac{d}{e_i}\cdot e_i\cdot e_2\cdot \ol{m}$, again contradicting that $\ol{m} >1$.

On the other hand, let $Q_\ell$ be a ramification point of a component $\ol{E}_i$ of type \ref{proofInvariance_case2}. If $Q_\ell$ does not belong to any other $\ol{E}_j,j\in I$, then (\ref{formula:relation_dme}) (with $\ol{E}_i=\ol{E}_1$) yields $d=1\cdot e_i\cdot 1\cdot \ol{m}$, implying that $\ol{m} >1$.  This finishes the proof of the claim.

\medskip

To achieve the formula in the statement of Theorem~\ref{Thm:dZtop} we have to enlarge the index set conveniently by introducing a set of curvettes $\{E_\ell\}$ defined as follows. Consider an exceptional component $\ol{E}_j$ of type \ref{proofInvariance_case2} and let $Q_\ell$ be one of its ramification points.
Assume that $Q_\ell$ does not belong to any other $\ol{E}_j,j\in I$. We consider a $\mu_d$-invariant curvette  $E_\ell$, intersecting $E_i$ transversely at $P=\wt{\rho}^{-1}(Q_\ell)$. 
Since $\wt{\rho}\colon X\to \ol{X}$ is a non-ramified $d$-covering outside $\bigcup_{i\in I} E_i$, each curvette $E_\ell$ has ramification order $e_\ell=1$. 
As usual, its numerical data are $(0,1)$. Thus we extend the index set from $I$ to $J$, to include our curvettes; in other words, all these curvettes belong to $\{E_\ell\}_{\ell \in J\setminus I}$. Their images in $\ol{X}$ are denoted by $\ol{E}_\ell, \ell \in J\setminus I$.

Now, recall from~\eqref{Eq:P_pi} in Section~\ref{SubSec:ZQres} the notation $\P_{\ol{\pi}}= (\Sing \ol{X})   \cup \bigcup_{\set{i,j}\subset I} (\ol{E}_i\cap \ol{E}_j)$. If we set 
$\P'_{\pi}:= \wt{\rho}^{-1}\P_{\ol{\pi}}$, % which is a subset of $\P_{\pi}$, 
the previous discussions imply that $ \wt{\rho}$ induces a bijection between $\P_{\pi}'$ and $P_{\ol{\pi}}$. The zeta function on the source can be rewritten accordingly as
     \begin{align*}
\Ztopp{(\CC^2,0)}&(D,W; s)=
\sum_{i\in I_e} \frac{\chi(E_i^\circ)}{\nu_i + N_i s} + \sum_{\{i,j\}\subset I} \frac{\chi(E_i\cap E_j)}{(\nu_i+N_i s)(\nu_j + N_j s)}  \\
&= \sum_{i\in I_e} \frac{\chi(E_i^\square) + \#\set{E_\ell\text{ curvette}\mid E_\ell\cap E_i\neq\emptyset}}{\nu_i + N_i s} + \sum_{\{i,j\}\subset I} \frac{\chi(E_i\cap E_j)}{(\nu_i+N_i s)(\nu_j + N_j s)}  \\
&= \sum_{i\in I_e} \frac{\chi(E_i^\square)}{\nu_i + N_i s} + \sum_{P\in\P_{\pi}'} \frac{1}{(\nu_i+N_i s)(\nu_j + N_j s)} ,
     \end{align*} 
  where $E_i^\square:= E_i\setminus \P_{\pi}'$, and $(N_i,\nu_i), (N_j,\nu_j)$ are the numerical data of the two components $E_i$ and $E_j$ ($i,j\in J$) 
intersecting at the point $P$.

For the zeta function in the target, we use the notation for numerical data as in Section~\ref{SubSec:ZQres} to obtain 
  \[
\Ztopp{(\CC^2/\mu_d,[0])}(\ol{D},\ol{W}; s)= \sum_{i\in I_e} \frac{\chi(\ol{E}_i^\circ)}{\ol{\nu}_i + \ol{N}_i s} + \sum_{Q\in {\P}_{\ol{\pi}}} \frac{\ol{m}_Q}{(\ol{\nu}_{i}+\ol{N}_{i}s)(\ol{\nu}_{j}+\ol{N}_{j}s)},
  \]
  where
 $(\ol{N}_i,\ol{\nu}_i), (\ol{N}_j,\ol{\nu}_j)$ are the numerical data of the two components $\ol{E}_i$ and $\ol{E}_j$ ($i,j\in J$) 
intersecting at $Q$, which is a point of order $\ol{m}_Q$ on $\ol{X}$.

 Summarizing, our zeta functions are expressed in terms of sums over bijective sets of indices, in particular we have a bijection term by term. So, our final step is to check the equality of Theorem~\ref{Thm:dZtop} term by term.  First, if $Q\in {\P}_{\ol{\pi}}$, one has
  \begin{align*}
\frac{\ol{m}_Q}{(\ol{\nu}_{i}+\ol{N}_{i}s)(\ol{\nu}_{j}+\ol{N}_{j}s)}
   &= \frac{\ol{m}_Q}{\left(\frac{\nu_i}{e_i} + \frac{N_i}{e_i} s \right)\left(\frac{\nu_j}{e_j} + \frac{N_j}{e_j} s \right)}
          = \frac{e_ie_j\ol{m}_Q}{\left(\nu_i + N_i s\right)\left(\nu_j + N_j s\right)}\\
          &=d\cdot \frac{1}{\left(\nu_i + N_i s\right)\left(\nu_j + N_j s\right)} ,
  \end{align*}
  by Proposition~\ref{prop:relation_dme} and Remark \ref{formula:relation_dme}. 
  
 Finally, we compare the terms involving the Euler characteristic of the open part, distinguishing by the type of $\ol{E}_i$.
	\begin{itemize}
	  \item[\ref{proofInvariance_case1}] If $e_i=d$, then, since $E_i\to \ol{E}_i$ is a $1:1$ map,  $\ol{E}_i^\circ$ is isomorphic to $E_i^\square$, and hence they have the same Euler characteristic. So
	  \[
        \frac{\chi(\ol{E}_i^\circ)}{\ol{\nu}_i + \ol{N}_i s}= \frac{\chi(E_i^\square)}{\frac{\nu_i}{e_i} + \frac{N_i}{e_i} s}= d\cdot\frac{\chi(E_i^\square)}{\nu_i + N_i s}.
      \]
	  
	  \item[\ref{proofInvariance_case2}] If $e_i\neq d$,  then  $\P_{\ol{\pi}}\cap \ol{E}_i=\set{Q_1,Q_2}$. Hence  $  \chi(\ol{E}_i^\circ) = \chi(E_i^\square) = 0$,
	 	  and it follows that 
      \[
        \frac{\chi(\ol{E}_i^\circ)}{\ol{\nu}_i + \ol{N}_i s} = d\cdot \frac{\chi(E_i^\square)}{\nu_i + N_i s} = 0.
      \]
    \end{itemize}

	% =========================================================
	% Comparison of log canonical models
	% =========================================================
	\bigskip
	\section{Comparison of log canonical models}\label{sec:log_canonical}

An alternative approach to proving our main theorems could be via 
comparing the log canonical models of both source and target. Here we state some results in this direction that are of independent interest, especially in the context of Setting~\ref{setting2}. 

A log canonical model is typically associated to a pair consisting of a normal variety and a reduced divisor on it, or, more generally, to a {\em boundary}, which is  a $\QQ$-divisor with coefficients belonging to the interval $[0,1]$. It can be considered as some \lq partial log resolution\rq\ of the pair.

\begin{defi} 
 Let $V$ be a normal variety and $B$ a boundary on $V$ such that the divisor $K_V+B$ is $\QQ$-Cartier.
A log canonical model of the pair $(V,B)$ is a normal variety $X$, equipped with a projective birational morphism $\pi:X\to V$, such that, denoting by $E$ the reduced exceptional divisor of $\pi$,
\begin{enumerate}
\item 
 the pair $(X, \wh{B}+E)$ is log canonical, and
\item  the divisor $K_X+ \wh{B}+E$ is $\pi$-ample.
\end{enumerate}
\end{defi}

Such a model exists and is unique, see e.g.~\cite[Theorem 1.32]{Kollar13}.  In dimension $2$ the condition that $K_V+B$ is $\QQ$-Cartier is not necessary.

\medskip	

As in Section~\ref{ssec:Numerical_Relations}, we now fix a finite morphism $\rho\colon (S, o)\to (\ol{S}, \ol{o})$  between normal surfaces $S$ and $\ol{S}$, with associated ramification divisor $\Ram_\rho$ on $S$ and branch divisor $B_\rho$ on $\ol{S}$. Moreover, let $C$ (resp.~$\ol{C}$) be a boundary on $(S, o)$ (resp.~$(\ol{S},\ol{o})$).
So
\begin{align*}
\ol{C}&=\sum_{i\in I} \ol{c}_i \ol{C}_i  \quad \text { with } 0\leq \ol{c}_i \leq 1 \text{ for all } i  ; \\
 C&=\sum_{i\in I} \sum_{j =1}^{r_i}c_j^{(i)} C_j^{(i)}   \text { with } 0\leq c_j^{(i)} \leq 1 \text{ for all } i,j ;
\end{align*}
where all $\ol{C}_i$ and $C_j^{(i)}$ are prime divisors, and where we assume moreover that $\rho^{-1}\ol{C}_i = \cup_{j =1}^{r_i} C_j^{(i)}$ for all $i$.

\smallskip

\begin{lemma}
Assume that $\rho^*(K_{\ol{S}}+ \ol{C}) = K_S+C$.
Let $\rho^*\ol{C}_i= \sum_{j=1}^{r_i}  e_j^{(i)}  C_j^{(i)}$ for all $i$.
Then
$$1-c_j^{(i)} = e_j^{(i)} (1-\ol{c}_i), \quad \forall i,j.$$
\end{lemma}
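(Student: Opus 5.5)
The plan is to combine Hurwitz's formula \eqref{eqn:Ram} with the definition of the pullback of Weil divisors under the finite map $\rho$, and then compare coefficients prime divisor by prime divisor. Since $\rho$ is finite between normal surfaces, $\rho^*$ is defined on Weil $\QQ$-divisors and is compatible with linear equivalence. We write $K_S = \rho^*K_{\ol{S}} + \Ram_\rho$, taking representatives compatibly as in \eqref{eqn:Ram}. Subtracting this from the hypothesis $\rho^*(K_{\ol{S}}+\ol{C}) = K_S + C$ gives the identity of $\QQ$-divisors
\[
\rho^*\ol{C} = \Ram_\rho + C .
\]
This must be read as an honest equality of divisors, not just of classes. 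That is justified because the hypothesis is meant as an equality of divisors once the canonical divisors are chosen compatibly, so that the relative canonical divisor of the finite map is exactly $\Ram_\rho$. This is the only delicate point, and I would address it by working with a fixed rational differential $\ol{\omega}$ on $\ol{S}$ and its pullback $\rho^*\ol{\omega}$, exactly as in the standard derivation of Hurwitz's formula.

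Next I expand both sides along the prime divisor $C_j^{(i)}$, which lies over $\ol{C}_i$. On the left, the coefficient of $C_j^{(i)}$ in $\rho^*\ol{C} = \sum_i \ol{c}_i \sum_j e_j^{(i)} C_j^{(i)}$ is $e_j^{(i)}\ol{c}_i$, since distinct $\ol{C}_i$ have disjoint sets of preimage components. On the right, the coefficient of $C_j^{(i)}$ in $\Ram_\rho$ is $e_j^{(i)}-1$ by definition of the ramification divisor; this also holds when $e_j^{(i)}=1$. Its coefficient in $C$ is $c_j^{(i)}$. Equating the two coefficients gives
\[
e_j^{(i)}\ol{c}_i = e_j^{(i)} - 1 + c_j^{(i)},
\]
which rearranges to $1-c_j^{(i)} = e_j^{(i)}(1-\ol{c}_i)$, as claimed.

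As a consistency check, I would compare coefficients along prime divisors not lying over any $\ol{C}_i$. There the identity $\rho^*\ol{C} = \Ram_\rho + C$ forces $\Ram_\rho$ to have coefficient zero, so $\rho$ is unramified along such divisors. This is compatible with the hypothesis and is not needed for the formula.
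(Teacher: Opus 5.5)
Your argument is correct and is essentially the paper's. The paper observes that $1-\ol{c}_i$ and $1-c_j^{(i)}$ are the $\nu$-numbers of $\ol{C}_i$ and $C_j^{(i)}$ for $\ol{W}=-\ol{C}$ and $W=-C$, which satisfy $W=\rho^*\ol{W}+\Ram_\rho$ exactly by your subtraction of Hurwitz's formula, and then cites Theorem~\ref{thm:MultAlphas_proportional}(1), whose proof for the $\nu$-numbers is precisely your coefficient comparison along $C_j^{(i)}$ (your use of $\ol{\omega}$ and $\rho^*\ol{\omega}$ also makes explicit the compatible choice of canonical divisors that the paper leaves implicit, since \eqref{eqn:Ram} is only stated up to $\QQ$-linear equivalence).
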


\begin{proof}
This follows from Proposition \ref{thm:MultAlphas_proportional}(1), since $1-\ol{c}_i$ is the number $\ol{\nu}_i$ corresponding to $\ol{C}_i$, and similarly for $ C_j^{(i)}$.
\end{proof}

\medskip

\begin{cor}\label{corollary coefficients}\mbox{}
\begin{enumerate}
\item For a fixed $i\in I$, we have that $\ol{c}_i=1$ if and only if some $c_j^{(i)}=1$ if and only if   $c_j^{(i)}=1$ for all $j$.
\item We have that $\ol{c}_i=1-\frac{1}{e_j^{(i)}}$ if and only if $c_j^{(i)}=0$.
\end{enumerate}
\end{cor}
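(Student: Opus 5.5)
The corollary should follow directly from the identity of the preceding lemma, $1-c_j^{(i)} = e_j^{(i)}(1-\ol{c}_i)$, valid for every $i\in I$ and every $j\in\{1,\dots,r_i\}$. We use only that each ramification index satisfies $e_j^{(i)}\geq 1$, in particular $e_j^{(i)}\neq 0$, and that all coefficients lie in $[0,1]$.

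For part (1), fix $i$. If $\ol{c}_i=1$, the right-hand side vanishes, so $c_j^{(i)}=1$ for every $j$. Trivially, ``$c_j^{(i)}=1$ for all $j$'' implies ``some $c_j^{(i)}=1$'', since $r_i\geq 1$: each $\ol{C}_i$ has nonempty preimage because $\rho$ is finite and surjective. Conversely, if $c_j^{(i)}=1$ for some $j$, then $e_j^{(i)}(1-\ol{c}_i)=0$. Since $e_j^{(i)}\neq 0$, this forces $\ol{c}_i=1$. This closes the cycle of implications.

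For part (2), fix $i$ and $j$ and divide the identity by $e_j^{(i)}$:
\[
\ol{c}_i = 1-\frac{1-c_j^{(i)}}{e_j^{(i)}}.
\]
Hence $\ol{c}_i = 1-\frac{1}{e_j^{(i)}}$ if and only if $1-c_j^{(i)}=1$, that is, if and only if $c_j^{(i)}=0$.

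There is no real obstacle here. The only point to check is that $e_j^{(i)}\geq 1$ (so we can divide by it) and that $r_i\geq 1$, both of which are part of the setup of the lemma.
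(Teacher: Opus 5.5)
Your proposal is correct and is exactly the intended argument: the paper records this corollary without a separate proof, as an immediate consequence of the preceding lemma's identity $1-c_j^{(i)} = e_j^{(i)}(1-\ol{c}_i)$ together with $e_j^{(i)}\geq 1$. Your check that $r_i\geq 1$, so that ``all $j$'' implies ``some $j$'', is the only extra detail, and you handle it correctly.
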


The following is an immediate consequence of~\cite[Sections~2.41 and 2.42]{Kollar13}.

\begin{lemma}\label{relation boundaries}
Consider a commutative diagram below, where $\wt{\rho}\colon X\to \ol{X}$ is also a finite morphism between normal surfaces, and $\pi\colon X\to S$ and  $\ol{\pi}\colon \ol{X}\to \ol{S}$ are proper and birational morphisms.
	  \begin{center}
		\begin{tikzcd}
			S \arrow[d, twoheadrightarrow, "\rho"]
			& \arrow[l, "\pi" above] X \arrow[d, twoheadrightarrow, "\wt{\rho}"]\\
			\overline{S} & \arrow[l, "\ol{\pi}" above] \ol{X}
		\end{tikzcd}
	  \end{center}
Let $\sum_{i\in I}  \ol{E}_i$ (resp.~$\sum_{i\in I} \sum_{j=1}^{r_i} E_j^{(i)}$)    be the reduced exceptional divisor of $\ol{\pi}$ (resp.~of $\pi$),
where all $\ol{E}_i$ and $E_j^{(i)}$ are prime divisors, and $\rho^{-1}\ol{E}_i = \cup_{j=1}^{r_i}  E_j^{(i)}$ for all $i$.

Assume that $\rho^*(K_{\ol{S}}+ \ol{C}) = K_S+C$.
Then 
$$\wt{\rho}^*\left(K_{\ol{X}}+ \wh{\ol{C}}+\sum_{i\in I}  \ol{E}_i \right)= K_S+\wh{C}+\sum_{i \in I}\sum_{j=1}^{r_i} E_j^{(i)}.$$
\end{lemma}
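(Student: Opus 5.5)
The plan is to reduce the statement to a comparison of discrepancy-type coefficients along each exceptional prime divisor, in the spirit of \cite[Sections~2.41 and 2.42]{Kollar13} and of Theorem~\ref{thm:MultAlphas_proportional}(1). (The right-hand side should read $K_X$ rather than $K_S$, since it is a divisor on $X$.) Write $\ol{\Delta}:=\wh{\ol{C}}+\sum_{i\in I}\ol{E}_i$ and $\Delta:=\wh{C}+\sum_{i,j}E_j^{(i)}$. The pull-back $\ol{\pi}^*(K_{\ol{S}}+\ol{C})$ is defined in the sense of Mumford on normal surfaces, so we can write
\[
K_{\ol{X}}+\ol{\Delta}=\ol{\pi}^*(K_{\ol{S}}+\ol{C})+\sum_{i\in I}\ol{a}_i\ol{E}_i ,
\]
and similarly $K_X+\Delta=\pi^*(K_S+C)+\sum_{i,j}a_j^{(i)}E_j^{(i)}$. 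Here $\ol{a}_i$ and $a_j^{(i)}$ are the log discrepancies of the pairs $(\ol{S},\ol{C})$ and $(S,C)$ along the exceptional divisors.

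First, pull back the downstairs identity by $\wt{\rho}$. Commutativity of the diagram gives $\wt{\rho}^*\ol{\pi}^*=\pi^*\rho^*$. By hypothesis, the term $\wt{\rho}^*\ol{\pi}^*(K_{\ol{S}}+\ol{C})$ equals $\pi^*(K_S+C)$. Next, apply Hurwitz's formula \eqref{eqn:Ram} to $\wt{\rho}$. Away from the exceptional locus $\wt{\rho}$ agrees with $\rho$, and the hypothesis $\rho^*(K_{\ol{S}}+\ol{C})=K_S+C$ says exactly that the ramification of $\rho$ is absorbed by the boundary. Concretely, $\wt{\rho}^*(K_{\ol{X}}+\wh{\ol{C}})$ and $K_X+\wh{C}$ agree up to divisors supported on the exceptional locus of $\pi$.

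It then remains to compare coefficients along each $E_j^{(i)}$. Let $e=e_j^{(i)}$ be the ramification index of $E_j^{(i)}$ over $\ol{E}_i$. Hurwitz locally along $E_j^{(i)}$ gives the coefficient $(e-1)$ of $E_j^{(i)}$ in $K_X-\wt{\rho}^*K_{\ol{X}}$. Combined with $\wt{\rho}^*\ol{E}_i=\sum_j e_j^{(i)}E_j^{(i)}+(\text{others})$, the coefficient of $E_j^{(i)}$ in $\wt{\rho}^*(K_{\ol{X}}+\ol{\Delta})$ is $e\cdot\ol{a}_i-e+\ldots$. This is precisely the computation in Theorem~\ref{thm:MultAlphas_proportional}(1), giving $a_j^{(i)}=e\,\ol{a}_i$, that is, log discrepancies scale by the ramification index. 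In log-discrepancy form this reads as follows: the coefficient of $E_j^{(i)}$ in $K_X+\Delta-\pi^*(K_S+C)$ is $a-1+1$, and the upstairs log discrepancy $a$ equals $e$ times the downstairs one. Hence the discrepancies of the reduced exceptional boundary, namely coefficient $1$ (log discrepancy $0$ relative to the pair with that boundary), match: $1-1=e(1-1)$. This is the same phenomenon as Corollary~\ref{corollary coefficients}(1), where coefficient $1$ is preserved.

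The main obstacle is justifying the Hurwitz-type identity for $\wt{\rho}$ exceptional divisor by exceptional divisor on singular normal surfaces. To handle it, I will work at the generic point of each $E_j^{(i)}$, where $X$ and $\ol{X}$ are smooth and $\wt{\rho}$ is locally $z\mapsto z^{e}$. That local form makes the coefficient check immediate, and Weil divisors are determined by their coefficients at codimension-one points.
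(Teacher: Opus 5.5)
Your proposal is correct and follows essentially the paper's route: the paper only cites \cite[Sections~2.41 and 2.42]{Kollar13}, whose content is exactly the scaling of log discrepancies $a_j^{(i)}=e_j^{(i)}\,\ol{a}_i$ under a finite morphism that you use. Equivalently, this is Hurwitz's formula for $\wt{\rho}$ at the generic point of each $E_j^{(i)}$, where the coefficient $-(e-1)+e\cdot 1=1$ shows that the reduced exceptional boundary pulls back to the reduced exceptional boundary. You are right that the right-hand side should read $K_X$; also, the identity $\wt{\rho}^*\ol{\pi}^*=\pi^*\rho^*$ for Mumford pullbacks, which you assert without justification, follows from the projection formula $\wt{\rho}^*\ol{\pi}^*L\cdot E=\ol{\pi}^*L\cdot\wt{\rho}_*E=0$ for $\pi$-exceptional $E$.
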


\begin{prop}\label{relation lc models}
In the setting of Lemma \ref{relation boundaries}, we have that 
$$\left(X, \wh{C}+\sum_{i\in I} \sum_{j=1}^{r_i} E_j^{(i)}\right) \text{ is the log canonical model of } (S,C)$$
if and only if
$$\left(\ol{X}, \wh{\ol{C}}+\sum_{i \in I} \ol{E}_i\right) \text{ is the log canonical model of } (\ol{S}, \ol{C}).$$
\end{prop}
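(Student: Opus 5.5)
The plan is to verify the two defining conditions of a log canonical model separately and to show that each one transfers in both directions along the finite morphism $\wt{\rho}$. The starting point is Lemma~\ref{relation boundaries}. In the statement its right-hand side should read $K_X$ rather than $K_S$; this is consistent with Hurwitz's formula~\eqref{eqn:Ram} for $\wt{\rho}$ combined with Theorem~\ref{thm:MultAlphas_proportional}(1). It gives the crepant-type pullback
$$\wt{\rho}^*\Big(K_{\ol{X}}+\wh{\ol{C}}+\sum_{i}\ol{E}_i\Big)=K_X+\wh{C}+\sum_{i,j}E_j^{(i)}.$$
Write $\ol{\Delta}:=\wh{\ol{C}}+\sum_i\ol{E}_i$ and $\Delta:=\wh{C}+\sum_{i,j}E_j^{(i)}$, so that $K_X+\Delta=\wt{\rho}^*(K_{\ol{X}}+\ol{\Delta})$. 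Both are boundaries: for $\ol{\Delta}$ this holds by assumption, and for $\Delta$ it follows from Corollary~\ref{corollary coefficients}, since the coefficients of $C$ lie in $[0,1]$ exactly when those of $\ol{C}$ do. The reduced exceptional divisor of $\pi$ is precisely $\wt{\rho}^{-1}$ of the reduced exceptional divisor of $\ol{\pi}$, by the argument in the proof of Proposition~\ref{prop:pi-Q-res} together with $\rho^{-1}(\ol{o})=\{o\}$.

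For the log canonical condition, I will invoke the standard result on finite morphisms (Koll\'ar, \cite[Sections~2.41--2.42]{Kollar13}, or \cite[Proposition~5.20]{KM}-type statements). If $\wt{\rho}\colon X\to\ol{X}$ is finite and $K_X+\Delta=\wt{\rho}^*(K_{\ol{X}}+\ol{\Delta})$, then $(X,\Delta)$ is lc if and only if $(\ol{X},\ol{\Delta})$ is lc. To justify this concretely, I would take a log resolution $\ol{Y}\to\ol{X}$ and form the normalized fibre product $Y\to X$, exactly as in Section~\ref{subsec:Zmot_inducedRes}, after making $Y$ resolved or passing to a further resolution. Theorem~\ref{thm:MultAlphas_proportional}(1) then applies to every divisor $F$ over $X$ lying over $\ol{F}$ over $\ol{X}$. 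Log discrepancies are the $\nu$-values with $W$ replaced by $-\Delta$, and they satisfy $a(F,X,\Delta)=e\cdot a(\ol{F},\ol{X},\ol{\Delta})$ with $e\geq 1$. Hence nonnegativity of all log discrepancies transfers in both directions, because every divisor over $\ol{X}$ has some divisor over $X$ above it and vice versa.

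For ampleness, I will use that $\ol{\pi}\circ\wt{\rho}=\rho\circ\pi$ and that $\wt{\rho}$ is finite and surjective. If $K_{\ol{X}}+\ol{\Delta}$ is $\ol{\pi}$-ample, its pullback under a finite morphism is ample relative to $\ol{\pi}\circ\wt{\rho}=\rho\circ\pi$, and hence also $\pi$-ample. Conversely, on surfaces I will use the relative Nakai--Moishezon/Kleiman criterion, which reduces $\pi$-ampleness to positivity on exceptional curves. For an exceptional curve $\ol{E}_i$ with preimage $E^{(i)}_j$, the projection formula gives
$$(K_X+\Delta)\cdot E_j^{(i)}=\deg(\wt{\rho}|_{E_j^{(i)}})\,(K_{\ol{X}}+\ol{\Delta})\cdot\ol{E}_i.$$
Positivity therefore transfers in both directions. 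Both directions use that the exceptional curves correspond under $\wt{\rho}$. One could also cite that ampleness descends along finite surjective morphisms.

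The main obstacle I expect is the lc transfer, and specifically the case where the fibre-product resolution is only a $\QQ$-resolution rather than a log resolution upstairs. In that case I would compute discrepancies on a further resolution of the quotient singularities, or simply cite Koll\'ar's 2.42 for the equivalence. Uniqueness of the log canonical model (\cite[Theorem~1.32]{Kollar13}) then closes the argument.
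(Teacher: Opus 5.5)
Your argument is correct and follows the paper's proof. Relative ampleness is transferred via the pullback identity of Lemma~\ref{relation boundaries}; you rightly note its right-hand side should read $K_X$, and your projection-formula and relative Nakai argument fills in what the paper states in one line. The log canonical condition is transferred by Koll\'ar's results on finite morphisms, which the paper invokes as \cite[Corollary 2.43]{Kollar13}.

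One harmless slip: the relation $1-c=e(1-\ol{c})$ only shows that $C$ being a boundary forces $\ol{C}$ to be one, not conversely (e.g.\ $\ol{c}=0$, $e=2$ gives $c=-1$). Since both are boundaries by hypothesis, nothing depends on it.
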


\begin{proof}
From Lemma \ref{relation boundaries}, we derive that $K_X+ \wh{C}+\sum_{i\in I} \sum_{j=1}^{r_i} E_j^{(i)}$ is $\pi$-ample if and only if $K_{\ol{X}}+ \wh{\ol{C}}+\sum_{i \in I} \ol{E}_i$ is $\ol{\pi}$-ample.

And by~\cite[Corollary 2.43]{Kollar13}, we have that $(X, \wh{C}+\sum_{i\in I} \sum_{j=1}^{r_i} E_j^{(i)})$ is log canonical if and only if $(\ol{X}, \wh{\ol{C}}+\sum_{i \in I} \ol{E}_i )$ is log canonical.
\end{proof}

\medskip
We now put ourselves in the context of Setting~\ref{setting2} and Theorem \ref{Thm:Comparison_Ztop}, where we have that $W=0$ and  $\ol{W}=-B_\rho=(\frac1{e_1} -1)\ol{L}_1+(\frac1{e_2} -1)\ol{L}_2$. We will apply Proposition \ref{relation lc models}  to obtain the following 
comparison between the log canonical model of $(S,D_{\red})$ and the one of $\big(\ol{S}, \ol{D}_{\red}+\sum_{L_k\not\subset |D_{\red}|} (1-\frac1{e_k})\ol{L}_k\big)$, where 
$ |D_{\red}|$ stands for the support of $D_{\red}$. Note thus that, when $L_k (k=1,2)$ is not in the support of  $D_{\red}$ and $e_k>1$, the curve $\ol{L}_k$ is needed in the divisor on the target.

\begin{theorem}\label{lc comparison}
We have that
$$\left(X, \wh{D}_{\red}+\sum_{i \in I}\sum_{j=1}^{r_i} E_j^{(i)}\right) \text{ is the log canonical model of } (S,D_{\red})$$
if and only if
$$\left(\ol{X}, \wh{\ol{D}}_{\red}+\sum_{L_k\not\subset |D_{\red}|} \left(1-\frac1{e_k}\right)\wh{\ol{L}}_k +\sum_{i \in I} \ol{E}_i\right)$$
is the log canonical model of 
$$\left(\ol{S}, \ol{D}_{\red}+\sum_{L_k\not\subset |D_{\red}|} \left(1-\frac1{e_k}\right)\ol{L}_k\right).$$
\end{theorem}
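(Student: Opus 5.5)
Theorem~\ref{lc comparison} will follow by applying Proposition~\ref{relation lc models} with $C:=D_{\red}$ on $S=\CC^2$ and
$$\ol{C}:=\ol{D}_{\red}+\sum_{L_k\not\subset |D_{\red}|}\left(1-\frac1{e_k}\right)\ol{L}_k$$
on $\ol{S}=\CC^2/\mu_d$. Two things must be checked before invoking it.

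First, $C$ and $\ol{C}$ must be boundaries satisfying $\rho^{-1}\ol{C}_i=\cup_j C^{(i)}_j$. This is clear, since coefficients lie in $[0,1]$ and $\rho^{-1}\ol{L}_k=L_k$. The components $\ol{L}_k$ with $L_k\not\subset|D_{\red}|$ have preimage $L_k$ with coefficient $c=0$ in $C$. Formally we view $C$ as $D_{\red}+\sum 0\cdot L_k$, so that the index sets match.

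Second, and this is the key step, we need
$$\rho^*(K_{\ol{S}}+\ol{C})=K_S+C.$$
Hurwitz's formula \eqref{eqn:Ram} gives $K_S=\rho^*K_{\ol{S}}+\Ram_\rho$, and in Setting~\ref{setting2} we have $\Ram_\rho=(e_1-1)L_1+(e_2-1)L_2$. So it suffices to check that
$$\rho^*\ol{C}=D_{\red}+(e_1-1)L_1+(e_2-1)L_2.$$
We verify this component by component, using the Lemma preceding Corollary~\ref{corollary coefficients} in the reverse direction, i.e.\ checking $1-c=e(1-\ol{c})$.

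For a component $\ol{C}_i$ of $\ol{D}_{\red}$ not equal to an axis image, $\rho$ is unramified along its preimages ($e=1$), and $\rho^*\ol{C}_i$ is the reduced preimage, as required.

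For $\ol{L}_k\subset|\ol{D}_{\red}|$, we have $\rho^*\ol{L}_k=e_kL_k$. This contributes $e_kL_k=L_k+(e_k-1)L_k$, which matches the coefficient $1$ from $D_{\red}$ plus $e_k-1$ from $\Ram_\rho$.

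For $L_k\not\subset|D_{\red}|$, we get $\rho^*\big((1-\frac1{e_k})\ol{L}_k\big)=(e_k-1)L_k$, which is exactly the ramification term.

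Hence the identity holds. Some care is needed here about the precise pullback of $\ol{L}_k$ under the non-small quotient, i.e.\ that the ramification index along $L_k$ is $e_k$. This comes from the explicit description $B_\rho=\sum(1-\frac1{e_k})\ol{L}_k$ in Setting~\ref{setting2}.

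Then Lemma~\ref{relation boundaries} and Proposition~\ref{relation lc models} apply to the diagram. It remains to note that the hypothesis of Proposition~\ref{relation lc models} concerns a given commutative square with $\wt{\rho}$ finite. For the forward direction, we take the log canonical model $X$ of $(S,D_{\red})$. The $\mu_d$-action lifts to it, by uniqueness of the log canonical model, so $\ol{X}:=X/\mu_d$ fits into the diagram. Proposition~\ref{relation lc models} then shows that $\ol{X}$ carries the log canonical model of $(\ol{S},\ol{C})$. The strict transform of $\ol{C}$ is $\wh{\ol{D}}_{\red}+\sum(1-\frac1{e_k})\wh{\ol{L}}_k$, as in the statement.

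For the converse, we start from the log canonical model $\ol{X}$ of the target and let $X$ be the normalization of the fibre product, as in Section~\ref{subsec:Zmot_inducedRes}. This gives $\wt{\rho}$ finite, and again Proposition~\ref{relation lc models} applies.

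The main obstacle I expect is precisely this existence of the compatible square in each direction. I would try the equivariance/uniqueness argument first, falling back on the normalized fibre product.
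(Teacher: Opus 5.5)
Your proposal is correct and follows essentially the paper's proof: you apply Proposition~\ref{relation lc models} with $C=D_{\red}$. The only difference is that you verify $\rho^*(K_{\ol{S}}+\ol{C})=K_S+C$ directly via Hurwitz's formula and $\rho^*\ol{L}_k=e_kL_k$, whereas the paper reads off the coefficients of $\ol{C}$ from Corollary~\ref{corollary coefficients}; also, your closing discussion on constructing a compatible square is unnecessary, since the theorem is stated for a commutative diagram already given as in Lemma~\ref{relation boundaries}.
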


\begin{proof}
We apply Proposition \ref{relation lc models} with $C=D_{\red}$.  We claim that then, in the setting of Lemma \ref{relation boundaries}, we have $\ol{C}= \ol{D}_{\red}+\sum_{L_k\not\subset |D_{\red}|} (1-\frac1{e_k})\ol{L}_k$.

Indeed, when $L_k\not\subset |D_{\red}|$, the coefficient of $\ol{L}_k$ in $\ol{C}$ is $1-\frac1{e_k}$, by Corollary \ref{corollary coefficients}(2).  When $L_k \subset |D_{\red}|$,
we have that $\ol{L}_k$ is a component of $\ol{D}$, and its coefficient  in $\ol{C}$ is $1$, by Corollary \ref{corollary coefficients}(1).
\end{proof}

\begin{remark}
The poles of $\Ztopp{(\CC^2,0)}(D,0;s)$ and  $\Zmott{(\CC^2,0)}(D,0;s)$ are precisely all candidate poles that appear in the log canonical model of $ (S,D_{\red})$, i.e., all $-1/N_i$ induced by components of $D$ and all $-\nu_i/N_i$ induced by the exceptional components \cite[Section 3.4]{Veys97}.  
So by Theorems \ref{Thm:Comparison_Zmot} and \ref{Thm:Comparison_Ztop} and the above theorem, we have that the analogous statement also holds for 	
$\Ztopp{(\CC^2/G,[0])}(\ol{D},-B_\rho)$ and $\Zmott{(\CC^2/G,[0])}(\ol{D},-B_\rho)$.
\end{remark}
	
\begin{remark}	
We consider the context of Section \ref{subsec:orbit-pathology}, taking for simplicity $N=1$. So $D=C_1+C_2$ is a normal crossing divisor, where the smooth irreducible $C_1$ and $C_2$ are permuted by the $\mu_d$-action. The pair $(\CC^2, D)$ is its own log canonical model and then, by Theorem \ref{lc comparison}, the same must be true for the pair $(\CC^2/\mu_d, \ol{D}+\sum_{k=1}^2 (1-\frac1{e_k})\ol{L}_k)$.

More precisely, when $d\equiv 0 \mod 4$ (resp.~$d\equiv 2 \mod 4$, with say $b$ even), these log canonical  pairs are $(\CC^2/\mu_d, \ol{D})$ (resp.~$(\CC^2/\mu_d, \ol{D}+\frac12\ol{L}_1$)). These pairs are not in $\QQ$-normal crossing; their minimal embedded $\QQ$-resolution and minimal embedded resolution was presented if Figures~\ref{fig:JorgeEx2}, \ref{fig:JorgeEx3A}, \ref{fig:JorgeExCase2} and~\ref{fig:JorgeEx4},  respectively. 
\end{remark}

	% =========================================================
	% An extended Rodrigues-Veys Residue Theorem
	% =========================================================
	\bigskip
	\section{An extended Residue Theorem}\label{sec:RodVeysThm}

    In this section, we present a proof of Theorem~\ref{Thm:RodVeysQres}.
	The geometric determination of the poles of $\Zmott{(S,o)}(D,W; s)$ attempts to characterize the components $E_i$, contributing effectively to a given candidate pole; there are several results for the case $\Zmott{(S,o)}(D,0;s)$, e.g.~\cite{Veys95,Veys99,RodriguesVeys03,Rod04,Rod05}. When $W\neq 0$, some extra care is required~\cite{Veys07,NemethiVeys12}. We will follow the approach of~\cite{RodriguesVeys03} to characterize geometrically the poles of $\Zmott{(S,o)}(D,W; s)$ for arbitrary $W$.
	
	We prove this geometrical characterization in terms of an embedded $\QQ$-resolution in two steps. First, we prove the following generalization of~\cite[Theorem 3.4]{RodriguesVeys03} when $W\neq0$ for usual embedded resolutions. In a later section, we extend it for $\QQ$-resolutions.

\begin{theorem}\label{Thm:RodVeysExt}
	Let $(S,o)$ and $(D,W)$ be as in Setting~\ref{setting1}. Fix an embedded resolution $\pi \colon X\to (S,o)$ of $M=\supp(D)\cup \supp(W)$ and let $s_0\in \QQ$. With the notation introduced before, we have that $s_0$ is a pole of $\Zmott{(S,o)}(D,W; s)$ if and only if
	\begin{enumerate}[label=(\roman*)]
		\item\label{RVPart1} $s_0=-\nu_i/N_i$ for some irreducible component $E_i$ in the set $\wh{D}\cap\wh{W}$ or in the set $\wh{D}\setminus\wh{W}$ (in which case $\nu_i=1$), or
		\item\label{RVPart2} $s_0=-\nu_i/N_i$ for some non-rational exceptional curve $E_i$, or
		\item\label{RVPart3} $s_0=-\nu_i/N_i$ for a cycle of rational exceptional curves $E_i$, or
		\item\label{RVPart4} $s_0=-\nu_i/N_i$ for some rational exceptional curve $E_i$, intersecting at least three times other components $E_j$ with associated value $\alpha_j=\nu_j-(\nu_i/N_i)N_j\neq 1$.
	\end{enumerate}
\end{theorem}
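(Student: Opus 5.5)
We follow the strategy of \cite[Theorem 3.4]{RodriguesVeys03}, adapting each step to allow $W\neq 0$. Since $\pi$ is a usual embedded resolution, all points of $\P_\pi$ are smooth, and Lemma~\ref{Lemma:Arith-Rel} reads $\sum_{j}(\alpha_j-1)=2g_i-2$ over the intersection points on $E_i$. Fix a candidate pole $s_0$ and let $\mathcal{E}(s_0)$ be the set of components $E_i$ with $-\nu_i/N_i=s_0$.

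\emph{Order two.} If two components of $\mathcal{E}(s_0)$ intersect, then Proposition~\ref{RVmot}(1) shows that $s_0$ is a pole of order two. The curves realizing it lie in a connected subconfiguration of $\mathcal{E}(s_0)$. Either some member is non-rational, or a strict transform component with $N_i\neq 0$ is involved, or a cycle occurs, or some member is a rational curve meeting three components with $\alpha_j\neq 1$. Indeed, a chain of rational curves with equal ratio has $\alpha=0$ at its internal edges, so its ends must leave the chain. Hence (i)--(iv) hold. We still need the converse in this situation: if some component of type (i)--(iv) lies in a connected set of equal-ratio curves of size at least two, the pole has order two. This follows again from Proposition~\ref{RVmot}(1).

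\emph{Order one.} Assume now that $\mathcal{E}(s_0)=\{E_1,\dots,E_m\}$ consists of pairwise disjoint curves. By Proposition~\ref{RVmot}(2), $s_0$ is a pole if and only if $\sum_i\mathcal{R}_i\neq 0$ in $\mathcal{A}$, with $\mathcal{R}_i$ given by \eqref{Eq:ResExc} or \eqref{Eq:ResStri}.

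We first show that components outside (i)--(iv) contribute zero. Such a component is a rational exceptional $E_i$ with at most two points where $\alpha_j\neq 1$. If it has none, the arithmetic relation gives $-2=0$, which is impossible. If it has exactly one point with $\alpha\neq1$, then $\alpha=-1$. If it has two such points, then $\alpha_1+\alpha_2=0$. In both cases, writing $[E_i^\circ]=\LL+1-k$ and using that points with $\alpha_j=1$ contribute $\frac{\LL-1}{\LL-1}=1$, the bracket in \eqref{Eq:ResExc} collapses. For two points it becomes
\[
\LL-1+\frac{\LL-1}{\LL^{\alpha}-1}+\frac{\LL-1}{\LL^{-\alpha}-1}=0 ,
\]
and the single-point case is analogous. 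The new feature compared with $W=0$ is the strict transforms of $W$ not in $D$, with $\alpha_j=\nu_j$. Setting~\ref{setting1} ($\nu_j\neq 0$) guarantees that these $\alpha_j$ are nonzero, so the computation is legitimate.

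It remains to show that if some component is of type (i)--(iv), the total residue is nonzero. This is the main obstacle, because cancellations among several disjoint contributions must be excluded in a ring with zero divisors. We plan to pass through a suitable specialization, as in \cite{RodriguesVeys03}: the Hodge or Poincaré polynomial specialization $\mathcal{A}\to\QQ(u^{1/r})$. In this specialization we would compare leading behaviour:
\begin{itemize}
\item a non-rational $E_i$ contributes a term involving $-2g_iu$, of odd degree, which no rational contribution can cancel;
\item for rational rupture curves and strict transforms, we would follow the degree and sign analysis of \cite[Section~5]{RodriguesVeys03}, checking that the extra $W$-terms $\frac{\LL-1}{\LL^{\nu_j}-1}$ preserve the sign and degree estimates.
\end{itemize}
If the specialization is not fine enough because of the possibly negative $\alpha$ values from $W$, we would instead work directly in $\mathcal{A}$ with the Euler characteristic together with the $\LL$-degree filtration, as is done there.
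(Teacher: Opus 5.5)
Your treatment of the order-two case and of the vanishing direction is fine. Your bookkeeping of the points with $\alpha_j=1$, which add $1$ to the bracket and remove $1$ from $[E_i^\circ]$, is in fact more careful than the paper's own wording.

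The gap is in the step you yourself call the main obstacle: you never show that the total residue is nonzero in cases (i) and (ii), and the one concrete argument you offer does not work. After a one-variable specialization ($u=v$, or a Poincaré-type polynomial) one has $\LL^{\alpha_j}\mapsto u^{2\alpha_j}$ with $\alpha_j\in\frac1r\ZZ$ arbitrary. So the contributions of rational curves and strict transforms are rational functions of $u^{2/r}$ and can contain odd or fractional powers of $u$. Nothing you say prevents them from cancelling $-2g_iu$. Your fallback to the Euler characteristic cannot help either: it is the topological specialization, which can kill a genuine motivic residue (see Example~\ref{ex:noPoleUpstairs}).

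The missing idea is to keep the two Hodge variables $u,v$ independent and to read off two limits of the normalized residue $\T=\frac{(uv)^{s_0}}{1-uv}\big(\sum_{I_e(s_0)}R_i^e+\sum_{I_s(s_0)}R_i^s\big)$. First split the intersection points on each $E_i$ into $k_i^+$ points with $\alpha_j>0$ and $k_i^-$ points with $\alpha_j<0$. One gets
\[
\lim_{u\to 0}\T=\sum_{i\in I_e(s_0)}\frac{1-k_i^--g_i v}{N_i}+\sum_{i\in I_s(s_0)}\frac{k_i^+}{N_i}.
\]
Its $v$-coefficient $-\sum_i g_i/N_i$ is negative as soon as (ii) holds; all genus terms have the same sign, so several non-rational curves cannot cancel each other either. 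If this limit vanishes, then all $g_i=0$, $\T$ depends only on $uv$, and
\[
\lim_{uv\to\infty}\frac{\T}{uv}=-\sum_{i\in I_s(s_0)}\frac{k_i}{N_i}.
\]
This is nonzero exactly when a strict transform of $D$ has ratio $s_0$, i.e., in case (i). Only the sign of each $\alpha_j$ enters, so the $W$-terms (nonzero by Setting~\ref{setting1}, of arbitrary sign) need no special estimates; this disposes of your worry about negative $\alpha$'s.

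What remains is the situation where all curves of ratio $s_0$ are rational exceptional, no strict transform has that ratio, and some curve is rupture. That is exactly the setting of \cite[Theorem~3.4]{RodriguesVeys03}, and the paper also simply invokes that argument. So your deferral to \cite{RodriguesVeys03} is acceptable only for this residual subcase, not for (i) and (ii).
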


\subsection{Proof of Theorem~\ref{Thm:RodVeysExt}}

Recall first that the numbers $\alpha_j$ appearing in (\ref{Eq:ResExc}) satisfy~\eqref{Eq:AlphaArithm}. In particular, when $E_i$ is rational and $k=2$ (resp.~$k=1$), we have that $[E_i^\circ]=\LL-1$ and $\alpha_1+\alpha_2=0$ (resp.~$[E_i^\circ]=\LL$ and $\alpha_1=-1$), and then clearly $\mathcal{R}^e =0$.
	Hence, if $s_0\in\QQ$ is a pole, it must come from one of the situations described in~\ref{RVPart1}, \ref{RVPart2}, \ref{RVPart3} or \ref{RVPart4}.

Now assume on the other hand that $s_0\in\QQ$ is a candidate pole induced by one of these four situations.
Since, by Proposition \ref{RVmot}, a candidate pole of order $2$ is automatically a pole of order $2$, we can assume that $s_0$ is a candidate pole of order 1. Furthermore, note that the situation described in~\ref{RVPart3} induces a candidate pole of order 2, thus we only need to deal with the other cases.

Due, for instance, to the occurrence of zero divisors in $\KVarC$ and $\mathcal{A}$, it is often not easy to show that an element in these rings is nonzero. A standard method is to use a specialization morphism to a ring without zero divisors. As in \cite{RodriguesVeys03}, we will use Hodge polynomials to proceed this way.

\bigskip

The {\em Hodge(-Deligne) polynomial} of a quasi-projective variety $V$ is 
	\[H(V):=\sum_{p,q}e^{p,q}(V)u^pv^q\in \ZZ[u,v],\]
	where $e^{p,q}(V)=\sum_{i\geq 0}(-1)^ih^{p,q}(H_c^i(V,\CC))$ and $h^{p,q}(H_c^i(V,\CC))$ denotes the rank of the
	$(p,q)$-Hodge component of the $i$th cohomology group with compact support of $V$. The following explicit formulas for the Hodge polynomial are well known, see e.g.~\cite{RodriguesVeys03}. First, $H(E_i\cap E_j)=\chi(E_i\cap E_j)=\Card(E_i\cap E_j)$.  Second,
	\begin{equation}\label{Eq:HodgeOpenStratum}
		H(E_i^\circ)=
			uv-g_iu-g_iv+1-\Card\left(E_i\cap \left(\cup_{j\neq i}E_j\right) \right), % 
			\end{equation}
where $g_i$ is the genus of $E_i$.

The assignment $V\to H(V)$ above induces a ring homomorphism $H:\KVarC\to \ZZ[u,v]$, which further induces a ring homomorphism $H$ from $\mathcal{A}_T$ to the field of rational functions in (rational powers of) $u$ and $v$ over $\QQ$. 
Using \eqref{Eq:HodgeOpenStratum}, the images of the expressions (\ref{Eq:ResExc}) and (\ref{Eq:ResStri}) are
	\begin{equation}\label{Eq:ResExcHodge}
		R^e := H(\mathcal{R}^e)= \frac{(1-uv)}{N_i(uv)^{-\nu_i/N_i}}\left(uv-g_i u-g_i v+(1-k)+\sum_{j=1}^k \frac{uv-1}{(uv)^{\alpha_j}-1}\right).
	\end{equation}
and
	\begin{equation}\label{Eq:ResStriHodge}
		R^s:=H(\mathcal{R}^s)= \frac{(1-uv)}{N_i(uv)^{-\nu_i/N_i}}\cdot\frac{uv-1}{(uv)^{\alpha_j}-1},
	\end{equation}
respectively.

\bigskip	
Suppose now that $s_0\in\QQ$ is a candidate pole of order 1, and denote by $I(s_0)$ the subset of indices $i\in I$, corresponding to all curves $E_i$ in $\pi^{-1}(M)$ with numerical data $(N_i,\nu_i)$ such that $s_0=-\nu_i/N_i$. We use the partition $I(s_0)=I_e(s_0)\cup I_s(s_0)$ to denote the components of the exceptional locus and the strict transform, respectively. 
		Assume that a curve $E_i$, for $i\in I(s_0)$, intersects exactly $k_i$ times other components, say $E_1,\ldots,E_{k_i}$, with numerical data $\set{(N_j,\nu_j)}_{j=1}^{k_i}$. Since $s_0$ is not a double pole, $\nu_j/N_j\neq \nu_i/N_i$ for each $j\in\{1,\ldots,k_i\}$.

We will show in each case that the image by $H$ of the total residue of $s_0$ is nonzero, which then implies that this total residue itself is nonzero in $\mathcal{A}$.

 The contribution $H(\mathcal{R}^e)$ or $H(\mathcal{R}^s)$ of $E_i$ to the residue of $s_0$ is, according to \eqref{Eq:ResExcHodge} and \eqref{Eq:ResStriHodge}, given by
		\begin{align*}
			R^e_i &= \frac{(1-uv)}{N_i(uv)^{s_0}}\left(uv-g_i(u+v)+(1-k_i)+\sum_{j=1}^{k_i} \frac{uv-1}{(uv)^{\alpha_j}-1}\right),\quad\text{if $i\in I_e(s_0)$}, \\
			 R_i^s&= \frac{(1-uv)}{N_i(uv)^{s_0}}\left(\sum_{j=1}^{k_i} \frac{uv-1}{(uv)^{\alpha_j}-1}\right),\quad\text{if $i\in I_s(s_0)$}.
		\end{align*}
		Note that $k_i=1$ for $R_i^s$, but we will keep this notation in order to get more compact expressions in what follows. 
		Now, the whole residue of $s_0$ is given by $$R = \sum_{i\in I_e(s_0)}R^e_i+\sum_{i\in I_s(s_0)}R^s_i.$$
We will analyze that residue by studying the expression
		\[\T=\frac{(uv)^{s_0}}{1-uv} \left(\sum_{i\in I_e(s_0)}R^e_i+\sum_{i\in I_s(s_0)}R^s_i\right).\]
In order to deal with the different contributions, we split the previous expressions as follows. Put  $k_i=k_i^++k_i^-$, where $k_i^+$ denotes the number of intersections with components $E_j$ having $\alpha_j>0$, and $k_i^-$  the number of intersections with components $E_j$ having $\alpha_j<0$. Furthermore, assume that $\alpha_j>0$ precisely for $j\in\{1,\ldots,k_i^+\}$. We also use the convention $\beta_j=-\alpha_j$ when $\alpha_j<0$. For instance, using these notations, we may rewrite $R^e_i $ %
 as follows:
		\[
		\frac{(1-uv)}{N_i(uv)^{s_0}}\left(uv-g_i(u+v)+(1-k_i^-)+\sum_{j=1}^{k_i^+} \frac{uv-(uv)^{\alpha_j}}{(uv)^{\alpha_j}-1}+\sum_{j=k_i^++1}^{k_i} \frac{(uv)^{\beta_j}(uv-1)}{1-(uv)^{\beta_j}}\right).
		\]
		A similar formula is obtained for $R^s_i$ by removing the expression $uv-g_i(u+v)$. Recall that $(k_i^+,k_i^-)$ is either $(1,0)$ or $(0,1)$ for strict transforms.
		
		We first deal with case~\ref{RVPart2}. Suppose that at least one of the exceptional curves is non-rational, say the genus of $E_{i_0}$ is  $g_{i_0}>0$.
		Note then that
		\begin{equation*}
			\lim\limits_{u\to 0} \T =
			\sum_{i\in I_e(s_0)} \frac{1-k_i^--g_iv}{N_i}+\sum_{i\in I_s(s_0)} \frac{k_i^+}{N_i}.
		\end{equation*}
		Since $g_{i_0}>0$ and $g_i\geq 0$ for $i\in I_e(s_0)$ with $i\neq i_0$, the latter is a  linear polynomial with a negative coefficient in front of $v$.
		So for this case, the residue of $s_0$ is not zero and  $s_0$ is thus a pole of order one.
		
		We now treat  the remaining cases. %  \ref{RVPart4}. %
		If $\lim_{u\to 0}\T\neq 0$ we are done, so we assume that $\lim_{u\to 0}\T= 0$. In particular this implies that
		\begin{equation}\label{Eq:limTzero0}
			0=\sum_{i\in I_e(s_0)} \frac{1-k_i^-}{N_i}+\sum_{i\in I_s(s_0)} \frac{k_i^+}{N_i}.
		\end{equation}
		Finally, computing the following limit together with~\eqref{Eq:limTzero0}, we obtain
		\begin{equation*}
			\lim\limits_{uv\to\infty}\frac{1}{uv}\T=\sum_{i\in I_e(s_0)} \frac{1-k_i^-}{N_i}-\sum_{i\in I_s(s_0)} \frac{k_i^-}{N_i} = -\sum_{i\in I_s(s_0)} \frac{k_i}{N_i}.
		\end{equation*}
		Note that case~\ref{RVPart1} is equivalent to $\lim\limits_{uv\to\infty}\frac{1}{uv}\T\neq 0$.
		Otherwise, we are left with the case where there are at least three exceptional rational curves contributing to $s_0$ as candidate pole (i.e., case~\ref{RVPart4}), but no strict transform is contributing.
		The proof in this case is already provided in \cite[Theorem 3.4]{RodriguesVeys03}.
	
\begin{remark} Let $(S,o)$ and $(D,W)$ be as in Setting~\ref{setting1}. As an \lq intermediate\rq\ between the motivic and topological zeta function, the \emph{Hodge zeta function} associated to the pair $(D,W)$ on $(S,o)$ is given by
	\begin{multline}\label{Def:HodZeta}
		\ZHod_{,(S,o)}(D,W;s):= \sum_{i\in I_e} H(E_i^\circ) \frac{uv-1}{(uv)^{\nu_{i}+N_{i}s}-1}\\
		+\sum_{\{i,j\}\subseteq I} H(E_i\cap E_j)\frac{(uv-1)^2}{((uv)^{\nu_{i}+N_{i}s}-1)((uv)^{\nu_{j}+N_{j}s}-1)}.
	\end{multline}
The arguments above yield the complete analogue of Theorem~\ref{Thm:RodVeysExt}, generalizing also \cite[Theorem 3.4]{RodriguesVeys03} to arbitrary $W$.
\end{remark}
	
\subsection{Proof of Theorem~\ref{Thm:RodVeysQres}}\label{Sec:ProofProp}

It suffices to prove that the geometric situation described in case \ref{ResPart4} of Theorem~\ref{Thm:RodVeysQres} is equivalent to case \ref{RVPart4} of Theorem~\ref{Thm:RodVeysExt}, once we transform an embedded $\QQ$-resolution into a usual embedded resolution by resolving the remaining quotient singularities. %

Assume that $\pi \colon X\to (S,o)$ is an embedded $\QQ$-resolution of $M$. We can produce an actual embedded resolution $\pi'\colon X'\to (S,o)$ of $M$ by resolving the finite set of quotient singular points by a finite sequence of blow-ups $h\colon X'\to\cdots\to X$. For each $P\in\Sing X$, $h^{-1}(P)$ is a chain of finitely many rational curves $F_1,\ldots,F_r$, where $r>0$ depends on  $P$.  
Thus, cases (i), (ii) and (iii) of Theorem~\ref{Thm:RodVeysQres} for $\pi$ lift automatically into cases \ref{RVPart1}, \ref{RVPart2} and \ref{RVPart3} of Theorem~\ref{Thm:RodVeysExt} for $\pi'$, respectively.

Now let $E_i$ be a rational exceptional component with numerical data $(N_i,\nu_i)$, and fix a point $P_j\in\P_\pi\cap E_i$. 
Denote by $\wt{E}_i$ the strict transform of $E_i$ by $h$. 
Recall that $P_j$ is either a (possibly singular) intersection point with another irreducible component, say $E_j$, or it belongs to $\Sing X\cap \left(E_i\setminus\bigcup_{\ell\in I, \ell\neq i} E_\ell\right)$. 
Assume that $P_j\in E_i \cap \Sing X$ with $\alpha$-value $\alpha_j= \frac{\nu_j-(\nu_i/N_i)N_j}{m_j}$ with respect to $E_i$, where $(N_j,\nu_j)$ are the numerical data of $E_j$ if $P_j$ is the intersection point between $E_i$ and $E_j$, and $(N_j,\nu_j)=(0,1)$ otherwise. Let $F_1,\ldots,F_r$ be the chain of rational exceptional curves in $X'$, associated to $P_j$, such that $F_r$ intersects $\wt{E}_i$ transversely. 
The numerical data $(N_r,\nu_r)$ of $F_r$ can be computed using~\cite[Lemma~2.4]{Veys97}, namely 
\[
  \nu_r + s N_r = \frac{1}{\Delta_{1,r}}\big((\nu_j+sN_j) + \Delta_{1,r-1}(\nu_{i} + sN_{i})\big),
\]
where $\Delta_{k,\ell}$ is the absolute value of the determinant of the intersection matrix $\left(-F_t\cdot F_s\right)_{t,s=k}^{\ell}$. %
It is well known known that $\Delta_{1,r}$ equals the order $m_j$ of $P_j$ in $X$. 
Computing the $\alpha$-value of $F_r$ with respect to $\wt{E}_i$, we  thus have that
\[
  \alpha_r = \nu_r - \frac{\nu_i}{N_i}N_r = \frac{\nu_j-(\nu_i/N_i)N_j}{m_j}=\alpha_j.
\]
Then the condition $\alpha_j\neq1$ for $P_j\in \P_\pi\cap E_i$ in $X$ is equivalent to $\alpha_r\neq 1$ for the intersection point $\wt{E}_i\cap F_r$ in $X'$. 
Since points $P_j\in\P_\pi\cap E_i$ are in bijection with points on $\wt{E}_i$ that intersect other components, this proves the equivalence between Theorem~\ref{Thm:RodVeysQres}\ref{ResPart4} for $\pi$ and Theorem~\ref{Thm:RodVeysExt}\ref{RVPart4} for $\pi'$.

Finally, since the exceptional sets of $\pi$ and $\pi'$ only differ by finite disjoint unions of chains of rational components, then, by Theorem~\ref{Thm:RodVeysExt} applied to $\pi'$, we have covered an exhaustive list of cases in Theorem~\ref{Thm:RodVeysQres} where a true pole of $\Zmott{(S,o)}(D,W; s)$ is provided.
	
% =========================================================
% References
% =========================================================
\providecommand{\bysame}{\leavevmode\hbox to3em{\hrulefill}\thinspace}
\providecommand{\MR}{\relax\ifhmode\unskip\space\fi MR }
% \MRhref is called by the amsart/book/proc definition of \MR.
\providecommand{\MRhref}[2]{%
	\href{http://www.ams.org/mathscinet-getitem?mr=#1}{#2}
}
\providecommand{\href}[2]{#2}


\begin{thebibliography}{10}
	
	\bibitem{ACLM05}
	E.~{Artal Bartolo}, P.~{Cassou-Nogu\`es}, I.~Luengo, and A.~{Melle
		Hern\'andez}, \emph{Quasi-ordinary power series and their zeta functions},
	Mem. Amer. Math. Soc. \textbf{178} (2005), no.~841, vi+85.
	
	\bibitem{AMO14A}
	E.~{Artal Bartolo}, J.~{Mart\'{\i}n-Morales}, and J.~{Ortigas-Galindo},
	\emph{Cartier and {W}eil divisors on varieties with quotient singularities},
	Internat. J. Math. \textbf{25} (2014), no.~11, 1450100, 20. 
	
	\bibitem{AMO14B}
	\bysame, \emph{Intersection theory on abelian-quotient {$V$}-surfaces and {$\bf
			Q$}-resolutions}, J. Singul. \textbf{8} (2014), 11--30. 
		
	\bibitem{CompactComplexSurfaces:book}
	W.~P. Barth, K.~Hulek, C.~A.~M. Peters, and A~{Van de Ven}, \emph{Compact
		complex surfaces}, second ed., Ergebnisse der Mathematik und ihrer
	Grenzgebiete. 3. Folge. A Series of Modern Surveys in Mathematics [Results in
	Mathematics and Related Areas. 3rd Series. A Series of Modern Surveys in
	Mathematics], vol.~4, Springer-Verlag, Berlin, 2004. 
	
	\bibitem{Bor18}
	L.~A. Borisov, \emph{The class of the affine line is a zero divisor in the
		{G}rothendieck ring}, J. Algebraic Geom. \textbf{27} (2018), no.~2, 203--209.
	
	
	\bibitem{CNS18}
	A.~Chambert-Loir, J.~Nicaise and J.~Sebag, {\em Motivic integration}, Progress in Mathematics, 325, Birkh\"auser/Springer, New York, 2018.
	
	
	\bibitem{DL98}
	J.~Denef and F.~Loeser, \emph{Motivic {I}gusa zeta functions}, J. Algebraic
	Geom. \textbf{7} (1998), no.~3, 505--537.
	
	\bibitem{DL99}
	\bysame, \emph{Germs of arcs on singular algebraic varieties and motivic
		integration}, Invent. Math. \textbf{135} (1999), no.~1, 201--232.
	
	\bibitem{Ful93}
	W.~Fulton, \emph{Introduction to toric varieties}, Annals of Mathematics
	Studies, vol. 131, Princeton University Press, Princeton, NJ, 1993, The
	William H. Roever Lectures in Geometry. 
	
	\bibitem{Ful98}
	\bysame, \emph{Intersection theory}, second ed., Ergebnisse der Mathematik und
	ihrer Grenzgebiete. 3. Folge. A Series of Modern Surveys in Mathematics
	[Results in Mathematics and Related Areas. 3rd Series. A Series of Modern
	Surveys in Mathematics], vol.~2, Springer-Verlag, Berlin, 1998. 
	
	\bibitem{Har77}
	R.~Hartshorne, \emph{Algebraic geometry}, Graduate Texts in Mathematics, vol.
	No. 52, Springer-Verlag, New York-Heidelberg, 1977.
	
	\bibitem{HNK71}
	F.~Hirzebruch, W.~D. Neumann, and S.~S. Koh, \emph{Differentiable manifolds and
		quadratic forms}, Lecture Notes in Pure and Applied Mathematics, vol. Vol. 4,
	Marcel Dekker, Inc., New York, 1971, Appendix II by W. Scharlau. 
	
	\bibitem{Igu74}
	J.~Igusa, \emph{Complex powers and asymptotic expansions. {I}. {F}unctions of
		certain types}, J. Reine Angew. Math. \textbf{268/269} (1974), 110--130,
	Collection of articles dedicated to Helmut Hasse on his seventy-fifth
	birthday, II.
	
	\bibitem{IshiiKollar03}
	S.~Ishii and J.~Kollár, \emph{The {N}ash problem on arc families of
		singularities}, Duke Math. J. \textbf{120} (2003), no.~3, 601–620.
	
	
	\bibitem{Ju08}
	H.~W.~E. Jung, \emph{Darstellung der {F}unktionen eines algebraischen
		{K}\"orpers zweier unabh\"angigen {V}er\"anderlichen {$x,y$} in der
		{U}mgebung einer {S}telle {$x=a,\ y=b$}}, J. Reine Angew. Math. \textbf{133}
	(1908), 289--314. 
	
	\bibitem{Kollar13}
	J.~Koll\'{a}r, \emph{Singularities of the minimal model program}, Cambridge
	Tracts in Mathematics, vol. 200, Cambridge University Press, Cambridge, 2013,
	With a collaboration of S\'{a}ndor Kov\'{a}cs. 
	
	\bibitem{Kol92}
	J.~{Koll\'{a}r (ed.)}, \emph{Flips and abundance for algebraic threefolds},
	Soci\'et\'e{} Math\'ematique de France, Paris, 1992, Papers from the Second
	Summer Seminar on Algebraic Geometry held at the University of Utah, Salt
	Lake City, Utah, August 1991, Ast\'erisque No. 211 (1992). 
	
	\bibitem{LCMMVVS:formulas}
	E.~{Le\'{o}n-Cardenal}, J.~{Mart\'{\i}n-Morales}, W.~Veys, and J.~{Viu-Sos},
	\emph{Motivic zeta functions on {$\mathbb{Q}$}-{G}orenstein varieties}, Adv.
	Math. \textbf{370} (2020), 107192, 34. 
	
	\bibitem{Loeser88}
	F.~Loeser, \emph{Fonctions d'{I}gusa {$p$}-adiques et polyn\^omes de
		{B}ernstein}, Amer. J. Math. \textbf{110} (1988), no.~1, 1--21.
	
	\bibitem{NemethiVeys12}
	A.~N\'emethi and W.~Veys, \emph{Generalized monodromy conjecture in dimension
		two}, Geom. Topol. \textbf{16} (2012), no.~1, 155--217.
	
	\bibitem{Nemethi22:book}
	A.~{Némethi}, \emph{Normal surface singularities}, Ergebnisse der Mathematik
	und ihrer Grenzgebiete. 3. Folge. A Series of Modern Surveys in Mathematics
	[Results in Mathematics and Related Areas. 3rd Series. A Series of Modern
	Surveys in Mathematics], vol.~74, Springer, Cham, [2022] \copyright 2022.
	
	
	\bibitem{Po02}
	B.~Poonen, \emph{The {G}rothendieck ring of varieties is not a domain}, Math.
	Res. Lett. \textbf{9} (2002), no.~4, 493--497.
	
	\bibitem{Pop11}
	P.~Popescu-Pampu, \emph{Introduction to {J}ung's method of resolution of
		singularities}, Topology of algebraic varieties and singularities, Contemp.
	Math., vol. 538, Amer. Math. Soc., Providence, RI, 2011, pp.~401--432.
	
	
	\bibitem{Rod04}
	B.~Rodrigues, \emph{Geometric determination of the poles of highest and second
		highest order of {H}odge and motivic zeta functions}, Nagoya Math. J.
	\textbf{176} (2004), 1--18. 
	
	\bibitem{Rod05}
	\bysame, \emph{On the geometric determination of the poles of {H}odge and
		motivic zeta functions}, J. Reine Angew. Math. \textbf{578} (2005), 129--146.
	
	
	\bibitem{RodriguesVeys03}
	B.~Rodrigues and W.~Veys, \emph{Poles of zeta functions on normal surfaces},
	Proc. London Math. Soc. (3) \textbf{87} (2003), no.~1, 164--196.
	
	\bibitem{Satake}
	I.~Satake, \emph{On a generalization of the notion of manifold}, Proc. Nat.
	Acad. Sci. U.S.A. \textbf{42} (1956), 359--363. 
	
	\bibitem{Steenbrink77}
	J.~H.~M. Steenbrink, \emph{Mixed {H}odge structure on the vanishing
		cohomology}, Real and complex singularities ({P}roc. {N}inth {N}ordic
	{S}ummer {S}chool/{NAVF} {S}ympos. {M}ath., {O}slo, 1976), Sijthoff and
	Noordhoff, Alphen aan den Rijn, 1977, pp.~525--563.
	
	\bibitem{Veys95}
	W.~Veys, \emph{Determination of the poles of the topological zeta function for
		curves}, Manuscripta Math. \textbf{87} (1995), no.~4, 435--448.
	
	\bibitem{Veys97}
	\bysame, \emph{Zeta functions for curves and log canonical models}, Proc.
	London Math. Soc. (3) \textbf{74} (1997), no.~2, 360--378.
	
	\bibitem{Veys99}
	\bysame, \emph{The topological zeta function associated to a function on a
		normal surface germ}, Topology \textbf{38} (1999), no.~2, 439--456.

	
	\bibitem{Veys07}
	\bysame, \emph{Monodromy eigenvalues and zeta functions with differential
		forms}, Adv. Math. \textbf{213} (2007), no.~1, 341--357. 
	
\end{thebibliography}
\end{document}